\numberwithin{equation}{section}
\numberwithin{figure}{section}
\newtheorem{theorem}{Theorem}[section]
\newtheorem{lemma}[theorem]{Lemma}
\newtheorem{proposition}[theorem]{Proposition}
\numberwithin{equation}{section}
\numberwithin{equation}{section}
\begin{document}

\title{Exponential Convergence to the Maxwell Distribution For Some Class of Boltzmann Equations}
\author{J\"urg Fr\"ohlich\footnote{juerg@itp.phys.ethz.ch}, Zhou Gang\footnote{zhougang@itp.phys.ethz.ch}}
\maketitle
\setlength{\leftmargin}{.1in}
\setlength{\rightmargin}{.1in}%\NowFootNum \fixNumberingInArticle
\normalsize \vskip.1in \setcounter{page}{1}
\setlength{\leftmargin}{.1in} \setlength{\rightmargin}{.1in}
\centerline{Institute for Theoretical Physics, ETH Zurich, CH-8093, Z\"urich, Switzerland}
\section*{Abstract}
We consider a class of nonlinear Boltzmann equations describing return to thermal equilibrium in a gas of colliding particles suspended in a thermal medium. We study solutions in the space $L^{1}(\Gamma^{(1)},d\lambda),$ where $\Gamma^{(1)}=\mathbb{R}^{3}\times \mathbb{T}^3$ is the one-particle phase space and $d\lambda= d^3 v d^3 x$ is the Liouville measure on $\Gamma^{(1)}.$ Special solutions of these equations, called ``Maxwellians," are spatially homogenous static Maxwell velocity distributions at the temperature of the medium. We prove that, for dilute gases, the solutions corresponding to smooth initial conditions in a weighted $L^{1}$-space converge to a Maxwellian in $L^{1}(\Gamma^{(1)},d\lambda),$ exponentially fast in time.
\tableofcontents
\section{Physics Background}
In this paper we study the phenomenon of ``return to equilibrium" for a gas of particles suspended in a thermal medium, in the limit where the range, $D$, of two-body forces between pairs of particles tends to $0$, while $\rho D^2$ is kept constant, with $\rho$ the density of the gas (Boltzmann-Grad limit). We assume that the one-particle phase space, $\Gamma^{(1)},$ is given by
\begin{equation}
\Gamma^{(1)}=\mathbb{R}^3\times \mathbb{T}^3,
\end{equation} where $\mathbb{T}^3=\mathbb{R}^3/L\mathbb{Z}^3$, $L\not=0$, is configuration space (a three-dimensional, flat torus of diameter L), and $\mathbb{R}^3$ is velocity space. Boltzmann's hypothesis of ``molecular chaos" is the assumption that the $n-$ particle correlation functions describing the initial state of the gas at time $t=0$ are given by an $n-$fold product
$$\prod_{j=1}^{n}g_0(v_j,x_j),\ (v_j,x_j)\in \Gamma^{(i)},\ j=1,\cdots,n,\ n=1,2,3,\cdots,$$ of a one-particle density, $g_0(v,x)$, on $\Gamma^{(1)}.$ One expects that, in the Boltzmann-Grad limit, molecular chaos propagates from the initial state to the state of the gas at an arbitrary later time, i.e., that the $n-$particle correlation functions at time $t>0$ are given by
$$\prod_{j=1}^{n}g_{t}(v_j,x_j)$$ where $g_{t}$ is the solution of a Boltzmann equation with initial condition given by $g_{t=0}=g_0;$ (see ~\cite{Lanford1975} for important results in this direction).

In this paper, we assume that every particle in the gas interacts with a memory-less thermal medium of temperature $T>0.$ Physically, this assumption is quite natural. It appears to us, however, that the corresponding mathematical problems have not received the attention they deserve. Assuming first that the gas consists of a single particle, we expect that the time evolution of its state in the van Hove limit, where the strength, $\lambda$, of the interaction of the particle with the medium tends to $0$, but time is scaled by a factor $\lambda^{-2}$, is given by {\it{a linear Boltzmann equation}} of the form
\begin{equation}\label{eq:LinBolt}
\partial_{t}g+v\cdot \nabla_{x} g=-L g+G[g],
\end{equation} where
\begin{equation}\label{eq:loss}
(Lg)(v,x):=\nu_0(v) g(v,x),
\end{equation} with $\nu_0(v)=\int_{\mathbb{R}^3} r_0(u,v)\ d^3 u$, is a ``loss term", and
\begin{equation}\label{eq:gain}
G[g](v,x):=\int_{\mathbb{R}^3} r_0(v,u)\ g(u,x)\ d^3 u
\end{equation} is a ``gain term". The kernel $r_0(u,v)$ is assumed to obey ``{\it{detailed balance}}", i.e.,
\begin{equation}\label{eq:DetailBal}
r_0(u,v)=r_0(v,u)e^{\frac{\beta m}{2}(|v|^2-|u|^2)},
\end{equation} where $\beta=(k_{B}T)^{-1}$ denotes the inverse temperature, $m$ is the particle mass, and $\frac{m}{2}|v|^2$ is the kinetic energy of a non-relativistic particle of mass $m$ and velocity $v.$

The equation $\partial_{t}g+v\cdot \nabla_{x} g=0$ describes an inertial motion of a particle with velocity $v$ distributed over $\mathbb{T}^3$ according to $g_{t}(v,x)$. The right hand side of Equation ~\eqref{eq:LinBolt} describes the effects on the motion of the particle of its interactions with a thermal medium at temperature $T>0,$ in the van Hove limit; (see, e.g. ~\cite{Davies1976,ErdhosYau}). Next, we consider a gas of $N \simeq \rho L^3$ particles interacting with each other and with the medium. (Here $\rho$ is the density of the gas and $L^3$ the volume of $\mathbb{T}^3$). We assume that the medium has no memory (i.e., that it equilibrates arbitrarily rapidly after each interaction with a particle) and that the interactions between the particles in the gas are given by a two-body potential of short range (possibly induced by exchange of modes of the thermal medium). Let $\frac{d\sigma}{d\omega}$ denote the differential cross section for scattering between two particles in the given two-body potential. Let $u$ and $v$ be the velocities of two incoming particles and $u'$, $v'$ their outgoing velocities after an elastic collision process. By energy-momentum conservation,
\begin{equation}\label{eq:InOut}
u'=u-[(u-v)\cdot \omega]\omega,\ v'=v+[(u-v)\cdot \omega]\omega,
\end{equation} where $\omega$ is a unit vector. We define
\begin{equation}\label{eq:collision}
Q(g,g)(v,x):=\int [g(v',x)g(u',x)-g(v,x) g(u,x)]\ |u-v| \frac{d\sigma}{d\omega} d^2\omega d^3 u.
\end{equation} Then the Boltzmann equation for the time evolution of the one-particle density, $g_t(v,x)$, of a gas of $N$ interacting particles coupled to the thermal medium takes the following form:
\begin{equation}\label{eq:nonBo}
\partial_{t}g(v,x)+v\cdot\nabla_{x} g(v,x)=-\nu_0(v) g(v,x)+\int_{\mathbb{R}^3} r_0 (v,u) g (u,x)\ d^3 u+\kappa Q(g,g)(v,x),
\end{equation} where $\nu_0$ is as in ~\eqref{eq:loss} and $r_0$ as in ~\eqref{eq:DetailBal}, $Q(g,g)$ is given by ~\eqref{eq:collision}, and $\kappa$ is the number of moles of the gas. We are interested in solutions, $g_{t}(v,x)$, of ~\eqref{eq:nonBo} with the properties that $g_{t}(v,x)\geq 0$ and $\int_{\Gamma^{(1)}}g_{t}(v,x)\  d^3 vd^3 x=1.$

Under ``reasonable" assumptions (to be specified below) on the kernel $r_0$ and the cross section $\frac{d\sigma}{d\omega}$ (as a function of $\omega$ and of $u,\ v$), a local existence- and uniqueness theorem for smooth solution of ~\eqref{eq:nonBo} corresponding to smooth initial conditions $g_{t=0}(v,x)=g_0(v,x)\geq 0,$ with $\int_{\Gamma^{(1)}}g_0(v,x)\ d^3 v d^3 x=1,$ has been established; (see, e.g., ~\cite{Wenn1994,LuX1998,MouVi}). As a consequence, one may show that, for all times $t>0$ at which $g_{t}$ is known to exist,
\begin{itemize}
\item[(A)] $g_{t}(v,x)\geq 0$ whenever $g_0(v,x)\geq 0;$
\item[(B)]
$
\int_{\Gamma^{(1)}} g_{t}(v,x) \ d^3 vd^3 x=\int_{\Gamma^{(1)}} g_{0}(v,x)\ d^3 vd^3 x=1;
$
\item[(C)] $g(v,x)=C e^{-\frac{\beta m}{2}|v|^2}$ is a static (time-independent) solution of ~\eqref{eq:nonBo}, for a positive constant $C$. These static solutions are henceforth called ``Maxwellians".
\end{itemize}

The purpose of this paper is to prove {\it{asymptotic stability of Maxwellians.}} Our main result says that, under suitable decay- and smoothness assumptions on the initial condition $g_{0}(v,x)$, with $\int_{\Gamma^{(1)}} g_0(v,x)\ d^3 v d^3 x=1,$ and for sufficiently small values of the mole number, $\kappa$, of the gas, a global solution, $g(v,x),$ satisfying (A) and (B) exists and converges to the Maxwellian $Ce^{-\frac{\beta m}{2}|v|^2}$ (independent of $x$), with $C=L^{-3} (\frac{\beta m}{2\pi})^{\frac{3}{2}},$ {\it{exponentially fast in time}}. This result describes the phenomenon of ``exponential return to equilibrium" in a gas of particles suspended in a thermal medium. The velocity distribution of the particles inherits the temperature of the thermal medium thanks to the ``detailed balance condition" ~\eqref{eq:DetailBal}. A precise formulation of our result is presented in Theorem ~\ref{THM:MainTHM}, below.

In the literature, one finds many results on the asymptotic stability of Maxwellians for the Boltzmann equation with $r_0\equiv 0$ and $\kappa$ arbitrary. One circle of results concerns the spatially homogeneous case, where $g(v,x)$ is independent of the position $x$. This direction of research has been pioneered by T. Carleman in ~\cite{Carleman1960}. Further results can be found in ~\cite{MR0156656,Bodmer1973,CerIllPulv,Glassey1996,Mouhot2006}. Another circle of results concerns the Boltzmann equation on an exponentially weighted $L^2$ space; see, e.g. ~\cite{UKai1974,YGuo2002,YGuo2003,MR2629879}. The advantage of working in such spaces is that spectral theory on Hilbert space can be used.

From the point of view of physics, however, the space $L^{1}(\Gamma^{(1)},d\lambda)$, where $d\lambda$ is the Liouville measure on $\Gamma^{(1)}$, is the natural choice for a study of the Boltzmann equation ~\eqref{eq:nonBo}, because the function $g_{t}(v,x)$ has the interpretation of a probability density on $\Gamma^{(1)}.$ In this context, the existence of weak global solutions has been established in ~\cite{DiPernaLion1989}. In ~\cite{MR2116276, Mouhot2010}, the asymptotic stability of Maxwellians, for general initial conditions, has been studied under the assumption that global smooth solutions exist. In the spatially homogeneous case, such results appear, e.g. in ~\cite{Ark1988,Wenn1993, MR1233644, Boby1997, MisWenn1999, MR1264851,BobyGamPan04,Mouhot2006}.

When the nonlinearity $Q$ in ~\eqref{eq:nonBo} is absent, the equation is known as neutron transport equation. In certain settings, the spectrum of the propagator generated by the linear operator, on spaces $L^{p},\ 1\leq p\leq \infty$, has been studied in ~\cite{MR2148153,MR2216092}.

In this paper, we study the simpler problem of Boltzmann equations describing a gas of particles interacting with a thermal medium that tunes the temperature of the asymptotic Maxwell velocity distribution. The simplifications in our analysis, as compared to the usual Boltzmann equation {\it{without}} thermal medium, arise from the presence of the linear gain- and loss terms on the right hand sides of ~\eqref{eq:LinBolt} and ~\eqref{eq:nonBo}; (see ~\eqref{eq:loss}, ~\eqref{eq:gain}). The behavior of solutions of ~\eqref{eq:LinBolt}, for large times, is well understood. One may then view the nonlinearity, $\kappa Q(g,g)$, in ~\eqref{eq:nonBo} as a perturbation. More precisely, we propose to linearize solutions of ~\eqref{eq:nonBo} around the Maxwellian found by solving ~\eqref{eq:LinBolt}, as time $t\rightarrow \infty.$ We must then study the properties of a certain linear operator $L$ defined in Equation ~\eqref{eq:ALinear}, below. An important step in our analysis consists in proving an appropriate decay estimate for the linear evolution given by $e^{-tL}(1-P_0)$, where $P_0$ is the Riesz projection onto the eigenspace of $L$ corresponding to the eigenvalue $0$, which is spanned by the Maxwellian. What complicates this problem is that, for physically relevant choices of $r_0$ and cross sections $\frac{d\sigma}{d\omega}$, the spectrum of the operator $L$ occupies the entire right half of the complex plane, except for a strip of strictly positive width around the imaginary axis that only contains the eigenvalue $0$; see Figure ~\ref{fig:FigureExample}, below. Rewriting $e^{-tL}(1-P_0)$ in terms of the resolvent, $(L-z)^{-1},$ of $L$,
\begin{equation}\label{eq:spectralTHMini}
e^{-tL}(1-P_0)=-\frac{1}{2\pi i}\oint_{\Gamma} e^{-tz}(L-z)^{-1}\ dz,
\end{equation} (see, e.g., ~\cite{RSI}), where the integration contour $\Gamma$ encircles the spectrum of $L$, except for the eigenvalue $0$, we encounter the problem of proving strong convergence of the integral on the right hand side of ~\eqref{eq:spectralTHMini} on $L^{1}$. This problem is solved in Section ~\ref{sec:propagatorEst}. We expect that an extension of our techniques can be used to prove a conjecture in ~\cite{RezVillani2008} concerning the exponential convergence of solutions of the Boltzmann equation to a Maxwell distribution. For the results in this direction, see \cite{Mouhot2010} for a constructive proof, and \cite{MR900501} for a non-constructive proof.

Our paper is organized as follows. The main hypothesis on the kernel $r_0(u,v)$ and the cross section $\frac{d\sigma}{d\omega}$ and the main result, Theorem ~\ref{THM:MainTHM}, of our analysis are described in Section ~\ref{sec:MainTHM}. In Section ~\ref{sec:Formulation}, the Boltzmann equation ~\eqref{eq:nonBo} is rewritten in a more convenient form; see Equation ~\eqref{eq:ALinear}. The local wellposedness of Equation ~\eqref{eq:ALinear} is proven in Section ~\ref{sec:LocalWell}. In Section ~\ref{sec:propagatorEst}, a decay estimate on the propagator, $e^{-tL}(1-P_0)$, is established. This represents the technically most demanding part of our analysis. The proof of our main result is completed in Section ~\ref{sec:ProofMainTHM}. Three appendices contain some technical details.

\section*{Acknowledgments}
The second author wishes to thank C. Mouhot for pointing out many references.

%%%%%%%%%%%%%%%%%%%%%%%%%%%%%%%%%%%%%%%%%%%%%%%%%%%%%%%%%%%%%%%%%%%%%%%%%%%%%%%%%%%%%%%%%%%%%%%%%%5
\section{Explicit Form of the Equation and Main Theorem}\label{sec:MainTHM}
We use the notation $g_{t}(v,x)=:g(v,x,t)$, $(v,x)\in \mathbb{R}^3\times \mathbb{T}^3,$ $t\in \mathbb{R},$ and consider the equation (see ~\eqref{eq:nonBo})
\begin{equation}\label{eq:NLBL1}
\partial_{t} g+v\cdot\nabla_{x} g=-\nu_0 g+\int_{\mathbb{R}^{3}} r_{0}(v,u) g(u,\cdot)\ d^3 u+\kappa Q( g, g)
\end{equation} with initial condition
$$g(v,x,0)= g_0(v,x)\geq 0, \ \ x\in \mathbb{R}^{3}/(2\pi \mathbb{Z})^3\ (\text{i.e.,} \ L=2\pi).$$
The different terms on the right hand side are chosen as follows.
\begin{itemize}
\item[(1)]
The function $\nu_0:\mathbb{R}^3\rightarrow \mathbb{R}^{+}$ is defined by
\begin{equation}\label{eq:difNu0}
\nu_{0}(v):=\int_{\mathbb{R}^{3}} r_{0}(u,v)\ d^3 u.
\end{equation}
\item[(2)] The function $r_0 :\mathbb{R}^3\times \mathbb{R}^{3}\rightarrow \mathbb{R}^{+}$ must satisfy the detailed balance condition ~\eqref{eq:DetailBal}. In the following, we set $\beta m=2.$ The example we have in mind is given by
\begin{equation}\label{eq:R0exam}
r_{0}(u,v):=e^{-|u|^2}(1+|u-v|^2)^{\frac{1}{2}}.
 \end{equation} More generally, we require the following conditions on $r_0$: (a) There exists a positive constant $C>0$ such that $$\frac{1}{C}e^{-|u|^2}(1+|u-v|^2)^{\frac{1}{2}}\leq r_0(u,v)\leq Ce^{-|u|^2}(1+|u-v|^2)^{\frac{1}{2}}.$$ (b) There exists a constant $C_2>0$ such that the derivatives of $r_0$ satisfies the condition $$|\partial_{u}^{k}\partial_{v}^{l}r_{0}(u,v)|\leq C_2 e^{-\frac{1}{2}|u|^2}(1+|u-v|^2)^{\frac{1-l}{2}}$$ for $k+l\leq 1.$
\item[(3)] The constant $\kappa$ is positive and small.
\item[(4)] The nonlinearity $Q( g, g)$ is chosen to correspond to a hard-sphere potential:
\begin{equation}\label{eq:difColli}
Q(g,g)(v,x):=\int_{\mathbb{R}^3\times \mathbb{S}^2} |(u-v)\cdot \omega|[g(u',x)g(v',x)-g(u,x)g(v,x)]\ d^3 u\ d^2 \omega,
\end{equation}
where $u', v'\in \mathbb{R}^{3}$ are given by $u':=u-[(u-v)\cdot \omega]\omega,$ $v':=v+[(u-v)\cdot\omega]\omega$, see Equations ~\eqref{eq:InOut}, ~\eqref{eq:collision}.
\end{itemize}

%%%%%%%%%%%%%%%%%%%%%%%%%%%%%%%%%%%
We define a function $M:\mathbb{R}^{3}\rightarrow \mathbb{R}^{+}$ by
$$M(v):=e^{-|v|^2},$$ and a constant $C_{\infty}$ by
$$C_{\infty}:= \frac{\int_{\mathbb{R}^3\times \mathbb{T}^3} g_{0}(v,x)\ d^3 v d^3 x}{(2\pi)^3\int_{\mathbb{R}^3} M(v) \ d^3 v}$$ and we set $\langle v\rangle:=\sqrt{1+|v|^2}.$

The main result of this paper is the following theorem:
\begin{theorem}\label{THM:MainTHM}
We assume that $$\sum_{|\alpha|\leq 8}\|\langle v\rangle^{m}\partial_{x}^{\alpha} g_0\|_{L^{1}(\mathbb{R}^{3}\times \mathbb{T}^{3})}\leq C$$ for a constant $C<\infty$ and for some sufficiently large $m>0,$ and that the constant $\kappa>0$ in ~\eqref{eq:NLBL1} is sufficiently small. Then there exist positive constants $C_0$, $C_{1}$ such that
\begin{equation}\label{eq:expon}
\| g(\cdot,t)-C_{\infty} M\|_{L^{1}(\mathbb{R}^{3}\times \mathbb{T}^{3})}\leq C_{1} e^{-C_0 t}.
\end{equation}
\end{theorem}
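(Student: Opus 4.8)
\emph{Strategy and set–up.} The plan is to linearise the flow about the target Maxwellian $C_{\infty}M$ and to treat the hard–sphere term $\kappa Q$ as a perturbation, the engine of the argument being the exponential decay of the linear propagator $e^{-tL}(1-P_{0})$ established in Section~\ref{sec:propagatorEst}. Following Section~\ref{sec:Formulation}, write $g=C_{\infty}M+h$. Three elementary facts show that $C_{\infty}M$ is annihilated by the linearisation: $v\cdot\nabla_{x}M=0$; the detailed–balance identity~\eqref{eq:DetailBal} with $\beta m=2$ gives $\nu_{0}M=\int_{\mathbb{R}^{3}}r_{0}(v,u)M(u)\,d^{3}u$; and $Q(M,M)=0$, because the collisional substitution~\eqref{eq:InOut} conserves kinetic energy, so $M(u')M(v')=M(u)M(v)$. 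Hence $M\in\ker L$ and the fluctuation obeys $\partial_{t}h+Lh=\kappa\,Q(h,h)$, with $L$ the operator of~\eqref{eq:ALinear}; it is convenient to include the $\kappa$–linear part $\kappa C_{\infty}[Q(M,\cdot)+Q(\cdot,M)]$ in $L$, and since $\kappa$ is small this bounded perturbation does not destroy the spectral gap of the pure linear Boltzmann generator depicted in Figure~\ref{fig:FigureExample}. The normalisation of $C_{\infty}$ forces $\int_{\Gamma^{(1)}}h(\cdot,0)\,d\lambda=0$, and since the linear gain/loss terms and $Q$ all conserve mass, $P_{0}h(t)=0$ and $P_{0}Q(h(t),h(t))=0$ for all $t$; Duhamel's principle then gives
\begin{equation*}
h(t)=e^{-tL}(1-P_{0})h_{0}+\kappa\int_{0}^{t}e^{-(t-s)L}(1-P_{0})\,Q(h(s),h(s))\,ds .
\end{equation*}

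\emph{Ingredients.} I would then collect three facts. First, the local well–posedness of Section~\ref{sec:LocalWell} in the relevant weighted $L^{1}$ scale, together with (A) and (B), so that $h$ is a genuine solution of the Duhamel equation on a short time interval and the continuation argument below has an object to act on. Second, bilinear bounds for $Q$: since $\int_{\mathbb{S}^{2}}|(u-v)\cdot\omega|\,d^{2}\omega=c\,|u-v|\lesssim\langle u\rangle+\langle v\rangle$, and since for fixed $\omega$ the map $(u,v)\mapsto(u',v')$ is an involution of unit Jacobian that preserves $|(u-v)\cdot\omega|$, the operator $Q$ costs exactly one power of $\langle v\rangle$, costs no $x$–regularity, and always leaves one of its two arguments in a norm with one fewer velocity weight. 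Third, and decisively, the propagator estimate of Section~\ref{sec:propagatorEst}: starting from the contour representation~\eqref{eq:spectralTHMini} with $\Gamma$ the line $\operatorname{Re}z=\delta/2$, one shows that the integral converges strongly on $L^{1}(\Gamma^{(1)})$ and that $e^{-tL}(1-P_{0})$ decays like $e^{-C_{0}t}$ from a weighted, finitely $x$–differentiated $L^{1}$–norm to $L^{1}(\Gamma^{(1)})$, \emph{gaining} a power of $\langle v\rangle$ in the process — the gain coming from the loss term $\nu_{0}(v)\asymp\langle v\rangle$ in $L$, which is exactly what compensates the weight lost by $Q$. The hypotheses of the theorem (eight $x$–derivatives, weight $\langle v\rangle^{m}$ with $m$ large) are tailored to supply the room consumed by these finitely many shifts of derivative order and weight.

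\emph{Continuity argument.} Write $\|f\|_{X_{w}}:=\sum_{|\alpha|\le 8}\|\langle v\rangle^{w}\partial_{x}^{\alpha}f\|_{L^{1}(\Gamma^{(1)})}$. I would run a continuation argument simultaneously controlling the strong quantity $A(T):=\sup_{0\le t\le T}\|h(t)\|_{X_{m}}$ and the decaying quantity $B(T):=\sup_{0\le t\le T}e^{C_{0}'t}\|h(t)\|_{X_{0}}$ for some $C_{0}'\in(0,C_{0})$; note $\|g(\cdot,t)-C_{\infty}M\|_{L^{1}}\le\|h(t)\|_{X_{0}}$, so a bound on $B$ already yields \eqref{eq:expon}. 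Feeding the Duhamel formula into these norms, using the ingredients above — in particular the interpolation $\|h\|_{X_{1}}\lesssim\|h\|_{X_{0}}^{1-1/m}\|h\|_{X_{m}}^{1/m}$, which converts the low–weight factor of $Q$ into a decaying one — produces a pair of coupled inequalities whose structure is, schematically, $A(T)\lesssim\|h_{0}\|_{X_{m}}+\kappa\,A(T)^{2}$ and $B(T)\lesssim\|h_{0}\|_{X_{m}}+\kappa\,A(T)\,B(T)$. Since $\|h_{0}\|_{X_{m}}=\|g_{0}-C_{\infty}M\|_{X_{m}}$ is bounded by a fixed constant (the hypothesis bound, $M$ being Schwartz), choosing $\kappa$ small enough in terms of that constant keeps us on the small branch, $A(T)\lesssim\|h_{0}\|_{X_{m}}$ and $B(T)\lesssim\|h_{0}\|_{X_{m}}$, uniformly in $T$; the continuation criterion then gives global existence and \eqref{eq:expon} with $C_{0}=C_{0}'$.

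\emph{Main obstacle.} The whole weight of the proof rests on the propagator estimate of Section~\ref{sec:propagatorEst}. Because the spectrum of $L$ fills the half–plane $\{\operatorname{Re}z\ge\delta\}$ apart from the isolated eigenvalue $0$, exponential decay can only be extracted from the contour formula~\eqref{eq:spectralTHMini}; but $(L-z)^{-1}$ does not decay quickly enough in $|\operatorname{Im}z|$ on $L^{1}$ for that integral to converge absolutely. The route around this is to Fourier–decompose in $x\in\mathbb{T}^{3}$, split off the explicitly invertible transport–plus–loss resolvent $(i\,v\cdot\xi+\nu_{0}(v)-z)^{-1}$ for each mode $\xi\in\mathbb{Z}^{3}$, handle the gain and linearised–collision contributions by a Neumann–type expansion, estimate each resulting term uniformly in $\xi$ and in $z\in\Gamma$, and finally sum over $\xi$ using the assumed $x$–regularity, all while extracting the $\langle v\rangle$–weight gain from $\nu_{0}$ with only an integrable singularity in $t$. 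This is the delicate, technically heaviest step; by comparison the local theory, the bilinear bounds, and the continuity argument are routine.
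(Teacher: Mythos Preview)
Your overall architecture---linearise about $C_\infty M$, drive the argument with the decay of $e^{-tL}(1-P_0)$, and close by continuation---matches the paper's, and your description of how the propagator estimate is proved (Fourier in $x$, peel off the explicit transport--loss resolvent, finite Neumann expansion for the remainder, sum over modes using the $x$--regularity) is essentially the content of Section~\ref{sec:propagatorEst}. The gap is in the closure. The propagator bound of Theorem~\ref{THM:propagator} only controls the \emph{unweighted} output, $\|e^{-tL}(1-P_0)g\|_{L^1}\le C_1e^{-C_0t}\|\langle v\rangle^m g\|_{L^1}$; it says nothing about $\|e^{-tL}(1-P_0)g\|_{X_m}$, so feeding your single Duhamel formula into $A(T)=\sup_t\|h(t)\|_{X_m}$ yields nothing. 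Closing $A$ instead from the free Duhamel around $\nu+v\cdot\nabla_x$ fails too, because the $K_0$ term then appears on the right with no small factor $\kappa$. The claim that the propagator ``gains a power of $\langle v\rangle$ with only an integrable singularity in $t$'' is where the scheme breaks: since $\nu_0\asymp\langle v\rangle$, a full extra weight gives $\langle v\rangle e^{-t\nu_0}\lesssim t^{-1}e^{-\Lambda t/2}$, and $t^{-1}$ is not integrable at $0$. The genuine gain of one weight only appears \emph{after} a time integration, via $\int_0^t e^{-s\nu}\,ds\le\nu^{-1}\lesssim\langle v\rangle^{-1}$.

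The paper exploits exactly this. In place of your pointwise $A(T)$ it uses the time--integrated high--weight quantity $\mathcal{I}(t)=\sum_{|\alpha|\le8}\int_0^t\|\langle v\rangle^{2m+2}\partial_x^\alpha h(s)\|_{L^1}\,ds$, paired with $\mathcal{M}(t)=\sup_{s\le t}e^{C_0s}\sum_{|\alpha|\le8}\|\partial_x^\alpha h(s)\|_{L^1}$ (your $B$). The $\mathcal{M}$--bound comes from the full--propagator Duhamel and Theorem~\ref{THM:propagator}, after writing $\|h\|_{X_{m+1}}^2\le\|h\|_{X_{2m+2}}\|h\|_{X_0}$ by Cauchy--Schwarz, which produces $\kappa\mathcal{M}^{3/2}\mathcal{I}^{1/2}$. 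The $\mathcal{I}$--bound comes from a \emph{second} Duhamel, around $\nu+v\cdot\nabla_x$ only: integrating the resulting double time integral by parts (see \eqref{eq:integrationP}, \eqref{eq:inteByParts}) inserts the factor $\nu^{-1}$ that absorbs the extra weight from $Q$ and $K_1$, while the $K_0$ contribution is harmless by the strong smoothing~\eqref{eq:estK0}. The resulting system $\mathcal{M}\lesssim\|h_0\|+\kappa\mathcal{M}^{3/2}\mathcal{I}^{1/2}$, $\mathcal{I}\lesssim\|h_0\|+\kappa\mathcal{I}\mathcal{M}+\mathcal{M}$ then closes for small $\kappa$. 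Replacing your $A$ by such a time--integrated norm, and running the two Duhamel expansions in tandem, is what is missing from your outline.
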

This theorem will be proven in Section ~\ref{sec:ProofMainTHM}.

Concerning the choices of $r_0$ and the collision term $Q$ in ~\eqref{eq:NLBL1}, we make the following remarks.
\begin{itemize}
\item[(A)]
We expect that our results hold under more general assumptions.
For example, if $r_{0}(u,v)=e^{-|u|^2}h(|u-v|),$ where $h$ is a strictly positive, smooth bounded function, and if the collision term $|u-v|\frac{d\sigma}{d\omega}$ (see ~\eqref{eq:collision}), is bounded then it becomes quite easy to prove a result similar to Theorem ~\ref{THM:MainTHM}.
\item[(B)]
If the collision term is unbounded then it simplifies life to impose the condition that $r_0$ is unbounded too. For technical details we refer to Equations ~\eqref{eq:integrationP} and ~\eqref{eq:inteByParts} and the remarks thereafter.
\item[(C)] In our spectral analysis of the linear operator $L$, to be defined in ~\eqref{eq:difL} below, the unboundedness of $\nu_0$ in ~\eqref{eq:NLBL1}, which is defined in terms of $r_0,$ is used. We believe that this is not essential, although it makes proofs simpler. In fact, by results proven in ~\cite{Ark1988,Mouhot2006}, one can generalize our results in Lemma ~\ref{LM:roughEst}.
\end{itemize}

%%%%%%%%%%%%%%%%%%%%%%%%%%%%%%%%%%%%%%%%%%%%%%%%%%%%%%%%%%%%%%%%%%%%%%%%%5
\section{Reformulation of the Boltzmann Equation ~\eqref{eq:NLBL1}}\label{sec:Formulation}
To facilitate later analysis we reformulate equation ~\eqref{eq:NLBL1} in a more convenient form.
We define a function $f:\ \mathbb{R}^{3}\times\mathbb{T}^{3}\times \mathbb{R}^{+}\rightarrow \mathbb{R}$ by
\begin{equation}\label{eq:difF}
f(v,x,t):= g(v,x,t)-C_{\infty}M(v),
\end{equation} with the constant $C_{\infty}$ and function $M$ defined before Theorem ~\ref{THM:MainTHM}.
From ~\eqref{eq:NLBL1} we derive an equation for $f,$
\begin{equation}\label{eq:ALinear}
 \partial_{t}f=-Lf+\kappa Q(f,f).
\end{equation}
Here the nonlinear term $Q(f,f)$ is defined in ~\eqref{eq:difColli}, the linear operator $L$ is defined by
\begin{equation}\label{eq:difL}
L:=v\cdot\nabla_{x}+L_0+ C_{\infty}\kappa L_{1}.
\end{equation} Here $L_0$ and $L_1$ are defines as
\begin{itemize}
\item[(1)]
$$L_{0}:=\nu_{0}-K_{0},$$ where $\nu_0$ is defined in ~\eqref{eq:difNu0}, and for any function $f$, $K_0$ is defined as
\begin{equation}\label{eq:difK0}
K_{0}(f)(v):=\int_{\mathbb{R}^{3}} r_{0}(v,u) f(u)\ d^3 u.
\end{equation}
\item[(2)]
\begin{equation}
L_{1}f:=\nu_1(v)f+K_{1}(f)
\end{equation}
where $\nu_1$ is the multiplication operator defined by $$\nu_1(v):=\int_{\mathbb{R}^3\times \mathbb{S}^2}|(u-v)\cdot\omega| M(u)\ d^3u d^2 \omega,$$ and $K_{1}(f)$ is given by
\begin{equation}\label{eq:difK1}
\begin{array}{lll}
K_{1}(f)&:=& 2\pi \int_{\mathbb{R}^3} |u-v|^{-1} e^{ -\frac{|(u-v)\cdot v|^2}{|u-v|^2}} f(u)\ d^3 u-\pi \int_{\mathbb{R}^3} |u-v|e^{-|v|^2} f(u)\ d^3 u\\
& &\\
&=&\int_{\mathbb{R}^{3}\times \mathbb{S}^2} |(u-v)\cdot\omega| M(u) f(u)\ d^3 ud^2 \omega\\
& &\\
& &- \int_{\mathbb{R}^3\times\mathbb{S}^2 }|(u-v)\cdot\omega |M(u') f(v')\ d^3 ud^2 \omega\\
& &\\
& &-\int_{\mathbb{R}^3 \times \mathbb{S}^2} |(u-v)\cdot\omega| M(v') f(u')\ d^3 ud^2 \omega
\end{array}
\end{equation}
The explicit form of $K_{1}$ has been derived by R.T.Glassey in ~\cite{Glassey1996}, (see also ~\cite{MR0156656, CourHil1989}).
\end{itemize}

To simplify our notations, we define operators $K$ and $\nu$ by
\begin{equation}\label{eq:difK}
K:=-K_{0}+C_{\infty}\kappa K_{1}
\end{equation}
\begin{equation}\label{eq:difNu}
\nu:=\nu_0+C_{\infty}\kappa \nu_1.
\end{equation} Then the linear operator $L$ in ~\eqref{eq:ALinear} is given by $$L=\nu+v\cdot\nabla_{x}+K.$$

To prepare the ground for our analysis, we state some estimates on the nonlinearity $Q$ and the operators $\nu,$ $K_{0}$ and $K_1$. These estimates show that all these operators are unbounded.
\begin{lemma}\label{LM:EstNonline}
There exists a positive constant $\Lambda$ such that
\begin{equation}\label{eq:globalLower}
\nu_0(v),\ \nu_1(v)\geq \Lambda(1+|v|).
\end{equation}
For any $m\geq 0,$ there exists a constant $C_{m}$ such that, for arbitrary functions $f,\  g
\in L^{1}(\mathbb{R}^{3}),$
\begin{equation}\label{eq:estK0}
\|\langle v\rangle^{m}K_{0}f\|_{L^{1}(\mathbb{R}^{3})}\leq C_{m} \|\langle v\rangle f\|_{L^{1}(\mathbb{R}^{3})},
\end{equation}
\begin{equation}\label{eq:mK1}
\|\langle v\rangle^{m}K_{1}f\|_{L^{1}(\mathbb{R}^{3})}\leq C_{m} \|\langle v\rangle^{m+1}f\|_{L^{1}(\mathbb{R}^{3})},
\end{equation}
and
\begin{equation}\label{eq:estNonL}
\begin{array}{lll}
\|\langle v\rangle^{m} Q(f,g)\|_{L^{1}(\mathbb{R}^{3})}
&\leq& C_{m} \|f\|_{L^{1}(\mathbb{R}^{3})}\|\langle v\rangle^{m+1}g\|_{L^{1}(\mathbb{R}^{3})}+ C_{m}\|\langle v\rangle^{m+1}f\|_{L^{1}(\mathbb{R}^{3})}\|g\|_{L^{1}(\mathbb{R}^{3})}.
\end{array}
\end{equation}
\end{lemma}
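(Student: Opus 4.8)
The plan is to prove the four estimates one at a time; in each case the analysis reduces, after an appropriate change of variables, to an elementary one-dimensional Gaussian integral, and the only auxiliary inequalities needed are the Peetre-type bound $\langle a+b\rangle\le\langle a\rangle+\langle b\rangle$ (hence $\langle a+b\rangle^{m}\le C_{m}(\langle a\rangle^{m}+\langle b\rangle^{m})$) and the trivial splitting $\langle a\rangle^{m}\langle b\rangle\le\max(\langle a\rangle,\langle b\rangle)^{m+1}\le\langle a\rangle^{m+1}+\langle b\rangle^{m+1}$. For the lower bounds \eqref{eq:globalLower} I would first carry out the angular integration, using $\int_{\mathbb{S}^{2}}|(u-v)\cdot\omega|\,d^{2}\omega=2\pi|u-v|$, together with hypothesis (a) on $r_{0}$, to reduce both $\nu_{0}(v)$ and $\nu_{1}(v)$ to a constant multiple of $\int_{\mathbb{R}^{3}}e^{-|u|^{2}}|u-v|\,d^{3}u$. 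This integral is a continuous, strictly positive function of $v$; restricting the domain to $|u|\le1$ and using $|u-v|\ge|v|-1$ shows it grows at least linearly in $|v|$, so it is bounded below by $\Lambda(1+|v|)$ for some $\Lambda>0$.

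For \eqref{eq:estK0}, Fubini gives $\|\langle v\rangle^{m}K_{0}f\|_{L^{1}}\le\int_{\mathbb{R}^{3}}|f(u)|\big(\int_{\mathbb{R}^{3}}\langle v\rangle^{m}r_{0}(v,u)\,d^{3}v\big)\,d^{3}u$, and hypothesis (a) gives $r_{0}(v,u)\le Ce^{-|v|^{2}}\langle v-u\rangle\le C'e^{-|v|^{2}}\langle v\rangle\langle u\rangle$, so the inner integral is at most $C_{m}\langle u\rangle$, which is exactly \eqref{eq:estK0}. For \eqref{eq:mK1} I would use the explicit two-term representation of $K_{1}$ in \eqref{eq:difK1} (note that the three-term form is not integrable term by term, so it is the explicit form that must be used). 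The second term, with kernel $\pi|u-v|e^{-|v|^{2}}$, is estimated exactly as $K_{0}$, using $|u-v|\le\langle u\rangle+\langle v\rangle$.

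The first term of $K_{1}$, with kernel $k(v,u)=2\pi|u-v|^{-1}e^{-((u-v)\cdot v)^{2}/|u-v|^{2}}$, is the delicate one, and I expect it to be the main obstacle: $k$ has no decay whatsoever in the directions of $v$ orthogonal to $v-u$, so one cannot afford any waste. Here I would substitute $w=v-u$ and then pass to polar coordinates $w=r\omega$; the exponent collapses to $(r+\omega\cdot u)^{2}$ and $|u-v|^{-1}\,d^{3}v$ becomes $r\,dr\,d^{2}\omega$, so that $\int_{\mathbb{R}^{3}}\langle v\rangle^{m}k(v,u)\,d^{3}v=2\pi\int_{\mathbb{S}^{2}}\int_{0}^{\infty}\langle r\omega+u\rangle^{m}\,r\,e^{-(r+\omega\cdot u)^{2}}\,dr\,d^{2}\omega$. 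Bounding $\langle r\omega+u\rangle^{m}\le C_{m}(\langle r\omega\rangle^{m}+\langle u\rangle^{m})$ reduces everything to the one-dimensional integrals $\int_{0}^{\infty}(1+r^{2})^{m/2}\,r\,e^{-(r+s)^{2}}\,dr$ and $\int_{0}^{\infty}r\,e^{-(r+s)^{2}}\,dr$ with $s=\omega\cdot u\in[-|u|,|u|]$; the substitution $t=r+s$ bounds these by $C_{m}\langle s\rangle^{m+1}$ and $C\langle s\rangle$ respectively, the worst case being $s<0$, where the Gaussian peaks at $r=|s|$ away from the origin. It is precisely this shift that produces the single extra power $\langle v\rangle$ on the right-hand side of \eqref{eq:mK1}, and checking that the loss is only one power — not more — is the point that needs care. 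Since $\langle s\rangle\le\langle u\rangle$ and $\int_{\mathbb{S}^{2}}d^{2}\omega<\infty$, one obtains $\int\langle v\rangle^{m}k(v,u)\,d^{3}v\le C_{m}\langle u\rangle^{m+1}$, and \eqref{eq:mK1} follows by a final application of Fubini.

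Finally, for \eqref{eq:estNonL} I would split $Q(f,g)$ into its gain and loss parts. In the loss part, integrating out $\omega$ produces the factor $2\pi|u-v|\le2\pi(\langle u\rangle+\langle v\rangle)$, and then $\langle v\rangle^{m}(\langle u\rangle+\langle v\rangle)\le\langle u\rangle^{m+1}+2\langle v\rangle^{m+1}$ yields precisely the right-hand side of \eqref{eq:estNonL}. In the gain part I would also integrate over $v$ (since what is estimated is an $L^{1}_{v}$ norm), use the collision invariance $|u'|^{2}+|v'|^{2}=|u|^{2}+|v|^{2}$, equivalently $\langle u'\rangle^{2}+\langle v'\rangle^{2}=\langle u\rangle^{2}+\langle v\rangle^{2}$, to get $\langle v\rangle^{m}\le C_{m}(\langle u'\rangle^{m}+\langle v'\rangle^{m})$, and then, for each fixed $\omega$, perform the measure-preserving linear involution $(u,v)\mapsto(u',v')$, under which $|(u-v)\cdot\omega|$ is invariant. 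This turns the gain contribution into the same expression already bounded for the loss part, and the estimate follows. Throughout, the spatial variable $x$ enters only as a parameter, so the one-particle estimates above suffice.
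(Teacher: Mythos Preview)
Your argument is correct and, for \eqref{eq:globalLower}, \eqref{eq:estK0}, and \eqref{eq:estNonL}, coincides with what the paper does (the paper omits the first two as routine and handles the gain term in \eqref{eq:estNonL} exactly as you do: the involution $(u,v)\mapsto(u',v')$ for fixed $\omega$ is measure-preserving, $|(u-v)\cdot\omega|$ is invariant, and $\langle v\rangle^{m}\lesssim\langle u'\rangle^{m}+\langle v'\rangle^{m}$).

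The one genuine difference is your treatment of \eqref{eq:mK1}. The paper does not touch the explicit kernel at all: it simply observes that $K_{1}$ arises from the linearisation of $Q$ around $M$, so that $K_{1}f$ is a sum of terms of the form $Q(M,f)$ and $Q(f,M)$ (in the three-term representation), and then \eqref{eq:mK1} is an immediate corollary of \eqref{eq:estNonL} with one argument set equal to $M$. Your remark that ``the three-term form is not integrable term by term'' is therefore too pessimistic: once you also integrate in $v$ (which is what the $L^{1}$ norm requires), each of the three terms is handled by the very same change of variables you use for the gain part of $Q$. The paper's route is shorter; yours, via polar coordinates in $w=v-u$, is more laborious but has the virtue of giving pointwise control of $\int\langle v\rangle^{m}k(v,u)\,d^{3}v$ at the kernel level, which is in fact the kind of estimate needed later in the paper (Lemma~\ref{LM:oscillate}) when derivatives of the kernel enter.
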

This lemma is proven in Appendix ~\ref{sec:nonlinear}.
%%%%%%%%%%%%%%%%%%%%%%%%%%%%%%%%%%%%%%
\section{Local Well-Posedness of Equation ~\eqref{eq:NLBL1}}\label{sec:LocalWell}
In this section we prove local wellposedness of equation ~\eqref{eq:NLBL1}.

We briefly present the ideas used in the proof. One of the difficulties tackled in the present paper is that the nonlinearity $Q(f,f)$ is unbounded; see ~\eqref{eq:estNonL}. To overcome it we adopt a technique drawn from the works ~\cite{YGuo2002,YGuo2003}. Specifically, we consider the solution $f$ in a Banach space to be defined in ~\eqref{eq:Norm} below, the second term in its definition playing a crucial role in controlling $Q(f,f)$. For computational details we refer to ~\eqref{eq:integrationP} below.

The main result of this section is
\begin{proposition}\label{Prop:wellposed}
If the constant $\kappa>0$ in ~\eqref{eq:NLBL1} is sufficiently small and if $$\sum_{|\alpha|\leq 8} \|\langle v\rangle^{m} \partial_{x}^{\alpha} f_0\|_{L^{1}(\mathbb{R}^{3}\times \mathbb{T}^{3})}\leq \kappa^{-\frac{1}{4}},$$
for some $m\geq 2,$ then there exists a constant $T=T(\kappa)$ such that, on the interval $[0,T],$ equation ~\eqref{eq:ALinear} has a unique solution $f$ satisfying $$\sum_{|\alpha|\leq 8} \|\langle v\rangle^{m} \partial_{x}^{\alpha} f(\cdot,t)\|_{L^{1}(\mathbb{R}^{3}\times \mathbb{T}^{3})}+ \int_{0}^{t} \|\langle v\rangle^{m+1} \partial_{x}^{\alpha}f(\cdot,s)\|_{L^{1}(\mathbb{R}^{3}\times \mathbb{T}^{3})}\ ds\leq \kappa^{-\frac{1}{2}}.$$
\end{proposition}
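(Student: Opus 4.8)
The plan is to prove Proposition~\ref{Prop:wellposed} by a standard contraction/fixed-point argument in the weighted Banach space whose norm, for a function $f(v,x,t)$ on $\mathbb{R}^3\times\mathbb{T}^3\times[0,T]$, is
\begin{equation}\label{eq:Norm}
\|f\|_{X_T}:=\sup_{t\in[0,T]}\sum_{|\alpha|\leq 8}\|\langle v\rangle^{m}\partial_x^{\alpha}f(\cdot,t)\|_{L^1(\mathbb{R}^3\times\mathbb{T}^3)}+\int_0^T\sum_{|\alpha|\leq 8}\|\langle v\rangle^{m+1}\partial_x^{\alpha}f(\cdot,s)\|_{L^1(\mathbb{R}^3\times\mathbb{T}^3)}\,ds.
\end{equation}
The first step is to rewrite \eqref{eq:ALinear} in mild (Duhamel) form with respect to the damping multiplier $\nu(v)$ and the transport term $v\cdot\nabla_x$ only, treating $K$ and $\kappa Q(f,f)$ as source terms: writing $S(t)$ for the semigroup generated by $-(\nu+v\cdot\nabla_x)$, which acts explicitly by $\big(S(t)h\big)(v,x)=e^{-\nu(v)t}h(v,x-vt)$ (so that $\partial_x^{\alpha}$ commutes with $S(t)$ and $\|\langle v\rangle^k S(t)h\|_{L^1}\le e^{-\Lambda(1+|v|)t}$-type bounds hold by \eqref{eq:globalLower}), a solution satisfies
\begin{equation}
f(t)=S(t)f_0+\int_0^t S(t-s)\big(-Kf(s)+\kappa Q(f,f)(s)\big)\,ds.
\end{equation}
Define the map $\Phi$ by the right-hand side; the goal is to show $\Phi$ is a contraction on a ball $B_R=\{\|f\|_{X_T}\le R\}$ with $R=\kappa^{-1/2}$ for $T=T(\kappa)$ small.

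The second step is the crucial estimate explaining the role of the integral term in \eqref{eq:Norm}. For the linear term $Kf=-K_0f+C_\infty\kappa K_1 f$, the bounds \eqref{eq:estK0}, \eqref{eq:mK1} from Lemma~\ref{LM:EstNonline} give $\|\langle v\rangle^m\partial_x^\alpha Kf\|_{L^1}\lesssim\|\langle v\rangle^{m+1}\partial_x^\alpha f\|_{L^1}$, so after applying $\int_0^t S(t-s)(\cdot)\,ds$ one gains (via the exponential decay of $S$, or simply by integrating in $s$) control of the $\sup_t$ part of the norm by the integral part of the norm; this is exactly why the $\langle v\rangle^{m+1}$ time-integral is included. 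For the nonlinearity, \eqref{eq:estNonL} gives $\|\langle v\rangle^m\partial_x^\alpha Q(f,f)\|_{L^1}\lesssim\sum_{|\beta|\le|\alpha|}\|\partial_x^\beta f\|_{L^1}\|\langle v\rangle^{m+1}\partial_x^{\alpha-\beta}f\|_{L^1}$ (Leibniz on $\partial_x^\alpha$ plus the product estimate), so the $\kappa Q(f,f)$ term contributes $\lesssim \kappa R\cdot(\text{integral part})\le\kappa R^2$. Now I must produce the gain in $T$: the $\langle v\rangle^{m+1}$ time-integral of $S(t-s)f_0$ is bounded using $\int_0^T e^{-\Lambda(1+|v|)(t-s)}\langle v\rangle^{m+1}\,ds\lesssim \langle v\rangle^m$ uniformly, giving the $f_0$-contribution $\lesssim\|f_0\|_{\le 8,m}\le\kappa^{-1/4}$; and the remaining pieces are estimated so that the integral part of $\|\Phi f\|_{X_T}$ is at most $C(\kappa^{-1/4}+T^{1/2}\|f\|_{X_T}+\kappa\|f\|_{X_T}^2)$ or similar, where the $T$-dependence comes from Hölder/Minkowski in the $s$-integral when the source is only bounded in the weaker ($\langle v\rangle^m$) norm pointwise in time — i.e. the nonlinear term momentarily costs one extra power of $\langle v\rangle$ that is paid back only after time-integration, forcing a short time interval. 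Choosing $T=T(\kappa)$ small enough and $\kappa$ small enough makes $C(\kappa^{-1/4}+CT^{1/2}\kappa^{-1/2}+C\kappa\cdot\kappa^{-1})\le\kappa^{-1/2}$, so $\Phi:B_R\to B_R$. An identical pair of estimates applied to $\Phi f-\Phi\tilde f$, using bilinearity of $Q$ and linearity of $K$, gives the Lipschitz constant $\le\tfrac12$ on $B_R$, yielding existence and uniqueness of the fixed point.

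The third step is bookkeeping: verifying that the fixed point is a genuine (distributional, then by the regularity carried in the norm, strong) solution of \eqref{eq:ALinear} on $[0,T]$, and that it satisfies the stated bound, which is just $\|f\|_{X_T}\le\kappa^{-1/2}$. The main obstacle I anticipate is the second step — specifically, carefully tracking the competition between (i) the loss of one velocity weight incurred by $Q(f,f)$ and by $K_1$, (ii) the fact that $S(t-s)$ provides genuine exponential decay in the weight $\langle v\rangle$ but this must be leveraged to convert ``integral-norm control'' into ``sup-norm control'' without circular dependence, and (iii) arranging the $\kappa$- and $T$-powers so that the ball of radius $\kappa^{-1/2}$ is preserved despite the data only being of size $\kappa^{-1/4}$; the margin $\kappa^{-1/4}\ll\kappa^{-1/2}$ and the freedom to shrink $T$ are precisely what make the bookkeeping close. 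The commutation of $\partial_x^\alpha$ with everything except $Q$ (where Leibniz is needed) keeps the $|\alpha|\le 8$ sum harmless. One should also check that $f_0\ge 0$ and mass-normalization are not needed for this local result — they are not; positivity (A) is handled separately downstream.
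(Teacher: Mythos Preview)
Your overall architecture matches the paper's proof --- Duhamel with respect to $\nu+v\cdot\nabla_x$, the mixed sup/integral norm, and the trick $\int_0^T e^{-s\nu}\langle v\rangle\,ds\le\nu^{-1}\langle v\rangle\lesssim 1$ (this is precisely the paper's step \eqref{eq:integrationP}) to close the $\langle v\rangle^{m+1}$ time-integral --- but your treatment of the nonlinearity has a genuine gap. The displayed estimate
\[
\|\langle v\rangle^m\partial_x^\alpha Q(f,f)\|_{L^1}\lesssim\sum_{|\beta|\le|\alpha|}\|\partial_x^\beta f\|_{L^1}\,\|\langle v\rangle^{m+1}\partial_x^{\alpha-\beta}f\|_{L^1}
\]
is \emph{false} when $L^1=L^1(\mathbb{R}^3\times\mathbb{T}^3)$: the bound \eqref{eq:estNonL} holds only in $L^1(\mathbb{R}^3)$ at each fixed $x$, and after integrating in $x$ the pointwise product $\|f(\cdot,x)\|_{L^1_v}\|g(\cdot,x)\|_{L^1_v}$ does not split into a product of $L^1_{v,x}$ norms. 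One factor must be placed into $L^\infty_x L^1_v$, which the paper does via the Sobolev-type embedding $\sup_{x}\|h(\cdot,x)\|_{L^1_v}\lesssim\sum_{|\gamma|\le 4}\|\partial_x^\gamma h\|_{L^1_{v,x}}$ (Lemma~\ref{LM:embedding}, inequality \eqref{eq:Poincare}, leading to \eqref{eq:AppliEmbed}). This is the actual reason eight $x$-derivatives are carried: after Leibniz on $|\alpha|\le 8$, one of the two factors has order $\le 4$, and four further derivatives are spent on that factor for the embedding. Your remark that the $|\alpha|\le 8$ sum is ``harmless'' via Leibniz misses exactly this mechanism.

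There is also a secondary bookkeeping issue: by lumping $K_0$ and $K_1$ together as both costing $\langle v\rangle^{m+1}$ and absorbing them into the integral part of $\|f-g\|_{X_T}$, you obtain a contribution $\lesssim \|f-g\|_{X_T}$ with \emph{no small prefactor}, so the map is not a contraction. The paper separates the two: $K_1$ carries the small factor $C_\infty\kappa$, while for $K_0$ one uses the sharper bound \eqref{eq:estK0} (cost only $\langle v\rangle\le\langle v\rangle^m$, using $m\ge 2$), so that the $K_0$-contribution is bounded by $\int_0^t\|\langle v\rangle^m\partial_x^\alpha(f-g)\|_{L^1}\,ds\le t\,\|f-g\|_{X_T}$, and the factor $t\le\delta$ provides the smallness (see \eqref{eq:estPsi1}). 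This --- not a H\"older/Minkowski step on the nonlinearity --- is where the $T$-dependence in the contraction actually enters. Once you insert Lemma~\ref{LM:embedding} for $Q$ and treat $K_0$ and $K_1$ separately as above, your argument coincides with the paper's.
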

\begin{proof}
To simplify the notation we denote $L^1(\mathbb{R}^{3}\times \mathbb{T}^{3})$ by $L^1$.

To recast ~\eqref{eq:ALinear} in a convenient form,
we rewrite this equation using Duhamel's principle,
\begin{equation}
f(t)=e^{-t[\nu+v\cdot\nabla_{x}]}f_0+ \int_{0}^{t} e^{-(t-s)(\nu+v\cdot\nabla_{x})} H(f(s))\ ds,
\end{equation} with $H(f):=-K_{0}f+C_{\infty}\kappa K_{1}f+\kappa Q(f,f)$.

In order to be able to apply suitable results of functional analysis, we demand that $f$ and the terms on the right hand side belong to a suitable Banach space. We define a family of Banach spaces, $\mathcal{B}_{\delta}$, $0<\delta\ll 1,$ by
$$
\mathcal{B}_{\delta}:=\{g:\mathbb{R}^{3}\times \mathbb{T}^{3}\times [0,\delta] \rightarrow \mathbb{C}| \ \|g\|_{\mathcal{B}_{\delta}}<\infty\}
$$ where $\|g\|_{\mathcal{B}_{\delta}}$ is defined by
\begin{equation}\label{eq:Norm}
\|g\|_{\mathcal{B}_{\delta}}:=\sum_{|\alpha|\leq 8}[\sup_{0\leq s\leq \delta}\|\langle v\rangle^{m}\partial_{x}^{\alpha} g(\cdot,s)\|_{L^{1}}+\int_{0}^{\delta} \|\langle v\rangle^{m+1}\partial_{x}^{\alpha}g(\cdot, s)\|_{L^{1}}\ ds].
\end{equation}

Our key observations are:
\begin{itemize}
\item[(1)]
\begin{equation}\label{eq:nmIni}
\|e^{-t[\nu+v\cdot \nabla_{x}]} f_{0}\|_{\mathcal{B}_{\delta}}\leq \displaystyle\sum_{|\alpha|\leq 8}\|\langle v\rangle^{m}\partial_{x}^{\alpha}f_{0}\|_{L^{1}};
\end{equation}
\item[(2)] We define a nonlinear map, $\Pi,$ by $$\Pi(f,t):=  \int_{0}^{t} e^{-(t-s)(\nu+v\cdot\nabla_{x})} H(f(s))\ ds.$$ Then $\Pi: \ \mathcal{B}_{\delta}\rightarrow \mathcal{B}_{\delta}$ is a contractive map if restricted to a suitable domain. More specifically,
\begin{equation}\label{eq:contractive}
\|\Pi(f)-\Pi(g)\|_{\mathcal{B}_{\delta}}\leq \frac{1}{2} \|f-g\|_{\mathcal{B}_{\delta}},
\end{equation}
provided $ \|f\|_{\mathcal{B}_{\delta}},\ \|g\|_{\mathcal{B}_{\delta}}\leq \kappa^{-\frac{1}{4}},$ and for $\delta$ sufficiently small.
\end{itemize}

Obviously these two results, ~\eqref{eq:nmIni} and ~\eqref{eq:contractive}, together with the contraction lemma, imply the existence of a unique solution in the time interval $[0,\delta],$ provided that $\delta=\delta(\kappa)$ is sufficiently small.

In what follows we prove ~\eqref{eq:nmIni} and ~\eqref{eq:contractive}.

To prove ~\eqref{eq:contractive}, we start by estimating $\|\langle v\rangle^{m}\partial_{x}^{\alpha}[\Pi(f)-\Pi(g)]\|_{L^{1}}, \ |\alpha|\leq 8.$ We decompose this quantity into three terms:
\begin{equation}\label{eq:FirstStep}
\begin{array}{lll}
& &\|\langle v\rangle^{m}\int_{0}^{t} e^{-(t-s)(\nu+v\cdot \nabla_{x})} \partial_{x}^{\alpha}[H(f(s))-H(g(s))]\ ds\|_{L^{1}}\\
& &\\
&\leq &\int_{0}^{t} \|\langle v\rangle^{m}e^{-(t-s)(\nu+v\cdot \nabla_{x})} \partial_{x}^{\alpha}[H(f(s))-H(g(s))]\|_{L^{1}} ds\\
& &\\
&\leq &\int_{0}^{t} \|\langle v\rangle^{m} \partial_{x}^{\alpha}[H(f(s))-H(g(s))]\|_{L^{1}} ds\\
& &\\
&\lesssim & \int_{0}^{t} \|\langle v\rangle^{m}\partial_{x}^{\alpha}K_{0}(f(s)-g(s))\|_{L^{1}}\ ds+\kappa C_{\infty} \int_{0}^{t} \|\langle v\rangle^{m}K_{1}\partial_{x}^{\alpha}(f(s)-g(s))\|_{L^{1}}\ ds\\
& &\\
& &+\kappa\int_{0}^{t}\|\langle v\rangle^{m}\partial_{x}^{\alpha}[Q(f,f)(s)-Q(g,g)(s)]\ \|_{L^{1}}\ ds\\
& &\\
&=&\Psi_1+\Psi_2+\Psi_{3},
\end{array}
\end{equation}
where in the third step we use the fact that the operator $e^{t v\cdot \nabla_{x}}$ preserves $L^1$ norm. The terms $\Psi_{k},\ k=1,2,3,$ are defined in the obvious manner and are estimated below.
\begin{enumerate}
\item[(1)]
By Lemma ~\ref{LM:EstNonline}, inequality ~\eqref{eq:estK0},
\begin{equation}\label{eq:estPsi1}
\begin{array}{lll}
\Psi_{1}&=&\int_{0}^{t} \|\langle v\rangle^{m}K_{0}\partial_{x}^{\alpha}(f(s)-g(s))\|_{L^{1}}\ ds\\
& &\\
&\lesssim & \int_{0}^{t} \|\langle v\rangle \partial_{x}^{\alpha}(f-g)\|_{L^{1}}\ ds\\
& &\\
&\leq& t\|f-g\|_{\mathcal{B}_{\delta}},\ \text{for}\ t\leq \delta.
\end{array}
\end{equation}
\item[(2)] From Lemma ~\ref{LM:EstNonline}, inequality ~\eqref{eq:mK1} we deduce that
\begin{equation}\label{eq:estPsi2}
\Psi_{2}\lesssim \kappa C_{\infty} \int_{0}^{t} \|\langle v\rangle^{m+1}\partial_{x}^{\alpha}(f(s)-g(s))\|_{L^{1}}\ ds\lesssim \kappa C_{\infty} \|f-g\|_{\mathcal{B}_{\delta}},
\end{equation} for $t\leq \delta$.
\item[(3)] To estimate $\Psi_{3},$ we use the definition of $Q$ to obtain
$$\Psi_{3}\leq \kappa \int_{0}^{t} \|\langle v\rangle^{m}\partial_{x}^{\alpha}Q(f-g,f)(s)\|_{L^{1}}+ \|\langle v\rangle^{m}\partial_{x}^{\alpha}Q(g,f-g)(s)\|_{L^{1}}\  ds.$$
Using ~\eqref{eq:AppliEmbed}, below, we find that
\begin{equation}\label{eq:estPsi3}
\begin{array}{lll}
\Psi_{3}&\leq &\displaystyle\kappa \int_{0}^{t} \{\sum_{|\beta_1|\leq 8}\|\langle v\rangle^{m+1}\partial_{x}^{\beta_1}(f-g)\|_{L^{1}}\sum_{|\beta_2|\leq 8}(\|\partial_{x}^{\beta_2} f\|_{L^{1}}+\|\partial_{x}^{\beta_2}g\|_{L^{1}})\ \} ds\\
& &\\
& &+\kappa \displaystyle\int_{0}^{t}\{ \sum_{|\beta_1|\leq 8}\|\partial_{x}^{\beta_1}(f-g)\|_{L^{1}}\sum_{|\beta_2|\leq 8}(\|\langle v\rangle^{m+1} \partial_{x}^{\beta_2}f\|_{L^{1}}+\|\langle v\rangle^{m+1}\partial_{x}^{\beta_2}g\|_{L^{1}})\ \} ds\\
& &\\
&\lesssim&\kappa \|f-g\|_{\mathcal{B}_{\delta}}[\|f\|_{\mathcal{B}_{\delta}}+\|g\|_{\mathcal{B}_{\delta}}].
\end{array}
\end{equation}
\end{enumerate}
Collecting these estimates, we conclude, that for any $t\leq \delta\ll 1,$
\begin{equation}\label{eq:halfBdelta}
\|\langle v\rangle^{m}\partial_{x}^{\alpha}[\Pi(f)-\Pi(g)]\|_{L^{1}}\lesssim \|f-g\|_{\mathcal{B}_{\delta}}[C_{\infty}\kappa +\kappa(\|f\|_{\mathcal{B}_{\delta}}+\|g\|_{\mathcal{B}_{\delta}}) +\delta].
\end{equation}

Next, we estimate $\int_{0}^{t}\|\langle v\rangle^{m+1}\partial_{x}^{\alpha}[\Pi(f(s))-\Pi(g(s))]\|_{L^{1}}\ ds$. By direct computation,
\begin{equation}\label{eq:integrationP}
\begin{array}{lll}
& &\int_{0}^{t} \|\int_{0}^{s} e^{-(s-s_1)(\nu+v\cdot \nabla_{x})} \langle v\rangle^{m+1}\partial_{x}^{\alpha}[H(f(s_1))-H(g(s_1))] \ ds_1 \|_{L^{1}}\ ds\\
& &\\
&\leq &\int_{0}^{t}\int_{0}^{s} \|e^{-(s-s_1)(\nu+v\cdot\nabla_{x})}\langle v\rangle^{m+1}\partial_{x}^{\alpha} [H(f(s_1))-H(g(s_1))] \|_{L^{1}} \ ds_1 ds\\
& &\\
&= &\int_{0}^{t}\int_{0}^{s} \|e^{-(s-s_1)\nu}\langle v\rangle^{m+1}\partial_{x}^{\alpha} [H(f(s_1))-H(g(s_1))] \|_{L^{1}} \ ds_1 ds\\
& &\\
&=&\|\int_{0}^{t}\int_{0}^{s}  e^{-(s-s_1)\nu} |\langle v\rangle^{m+1}\partial_{x}^{\alpha}[H(f(s_1))-H(g(s_1)) ]| \ ds_1 ds \|_{L^{1}}\\
& &\\
&\leq & \|\int_{0}^{t} \nu^{-1}\langle v\rangle^{m+1} |\partial_{x}^{\alpha}[H(f(s))-H(g(s))] | \ ds \|_{L^{1}}\\
& &\\
&\lesssim &\int_{0}^{t} \|\langle v\rangle^{m} \partial_{x}^{\alpha}[H(f(s))-H(g(s))]  \|_{L^{1}} ds,
\end{array}
\end{equation} where the crucial step is the fourth one and is accomplished by integrating by parts in the variable $s,$ the last inequality results from our estimate on $\nu=\nu_0+C_{\infty}\kappa \nu_1$ in ~\eqref{eq:globalLower}. Here the condition on $r_0$ being unbounded, (see ~\eqref{eq:difNu0}), is used.

We observe that the last step in ~\eqref{eq:integrationP} is the same to that in the third line of ~\eqref{eq:FirstStep}. Hence it also admits the estimate in ~\eqref{eq:halfBdelta}, i.e.,
\begin{equation}\label{eq:halfBdelta2}
\int_{0}^{t}\|\langle v\rangle^{m+1}\partial_{x}^{\alpha}[\Pi(f(s))-\Pi(g(s))]\|_{L^{1}}\ ds\lesssim \|f-g\|_{\mathcal{B}_{\delta}}[C_{\infty}\kappa+\delta +\kappa(\|f\|_{\mathcal{B}_{\delta}}+\|g\|_{\mathcal{B}_{\delta}}) ].
\end{equation}
This, together with ~\eqref{eq:halfBdelta}, implies ~\eqref{eq:contractive}.

Next, we prove ~\eqref{eq:nmIni}. By direct computation $$
\begin{array}{lll}
\|e^{-t[\nu+v\cdot \nabla_{x}]} f_{0}\|_{\mathcal{B}_{\delta}}&=&\displaystyle\sum_{|\alpha|\leq 8}[\displaystyle\sup_{0\leq t\leq \delta}\|\langle v\rangle^{m}e^{-t[\nu+v\cdot\nabla_{x}]}\partial_{x}^{\alpha}f_0\|_{L^{1}}+\int_{0}^{t}\|\langle v\rangle^{m+1} e^{-s(\nu+v\cdot\nabla_{x})} \partial_{x}^{\alpha}f_0\|_{L^{1}}\ ds]\\
& &\\
&=&\displaystyle\sum_{|\alpha|\leq 8}[\|\langle v\rangle^{m}\partial_{x}^{\alpha}f_0\|_{L^{1}}+\|\int_{0}^{t} e^{-s\nu } \ ds\ \langle v\rangle^{m+1} |\partial_{x}^{\alpha}f_0|\|_{L^{1}}]\\
& &\\
&\leq & \displaystyle\sum_{|\alpha|\leq 8}\|\langle v\rangle^{m}\partial_{x}^{\alpha}f_0\|_{L^{1}}
\end{array}
$$ which is ~\eqref{eq:nmIni}.
\end{proof}

In the proof we have used the following embedding results; (see ~\eqref{eq:estPsi3}).
\begin{lemma}\label{LM:embedding} For any function $f:\ \mathbb{R}^{3}\times \mathbb{T}^{3}\rightarrow \mathbb{C},$
\begin{equation}\label{eq:Poincare}
\sup_{x\in \mathbb{Z}^{3}} \|f(\cdot, x)\|_{L^{1}(\mathbb{R}^{3})}\leq C\sum_{|\alpha|\leq 4} \|\partial_{x}^{\alpha}f\|_{L^{1}(\mathbb{R}^{3}\times \mathbb{T}^{3})}.
\end{equation} For any $\alpha\in (\mathbb{Z}^{+})^{3}$ satisfying $|\alpha|\leq 8$, and for arbitrary functions $f,\ g:\ \mathbb{R}^{3}\times \mathbb{T}^{3}\rightarrow \mathbb{C},$
\begin{equation}\label{eq:AppliEmbed}
\begin{array}{lll}
\|\langle v\rangle^{m}\partial_{x}^{\alpha} Q(f,g)\|_{L^{1}(\mathbb{R}^{3}\times \mathbb{T}^{3})}&\lesssim &\displaystyle\sum_{|\beta_1|, \ |\beta_2|\leq 8} [\|\langle v\rangle^{m+1}\partial_{x}^{\beta_1}f\|_{L^{1}(\mathbb{R}^{3}\times \mathbb{T}^{3})} \|\partial_{x}^{\beta_2}g\|_{L^{1}(\mathbb{R}^{3}\times \mathbb{T}^{3})}\\
& &\\
&+&\|\langle v\rangle^{m+1}\partial_{x}^{\beta_1}g\|_{L^{1}(\mathbb{R}^{3}\times \mathbb{T}^{3})} \|\partial_{x}^{\beta_2}f\|_{L^{1}(\mathbb{R}^{3}\times \mathbb{T}^{3})}].
\end{array}
\end{equation}
\end{lemma}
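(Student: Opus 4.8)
The plan is to prove the two assertions separately; each one reduces to Sobolev embedding on $\mathbb{T}^{3}$ combined with the velocity-space estimates already available in Lemma~\ref{LM:EstNonline}.

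\emph{The embedding \eqref{eq:Poincare}.} First I would fix $v\in\mathbb{R}^{3}$ and expand $f(v,\cdot)$ in a Fourier series on $\mathbb{T}^{3}=\mathbb{R}^{3}/(2\pi\mathbb{Z})^{3}$, writing $f(v,x)=\sum_{k\in\mathbb{Z}^{3}}\hat f(v,k)e^{ik\cdot x}$. Since $\langle k\rangle^{4}$ is a finite linear combination of monomials $k^{\alpha}$ with $|\alpha|\le 4$, one gets the bound, uniform in $k$, $\langle k\rangle^{4}|\hat f(v,k)|\lesssim\sum_{|\alpha|\le 4}|\widehat{\partial_{x}^{\alpha}f}(v,k)|\le\sum_{|\alpha|\le 4}\|\partial_{x}^{\alpha}f(v,\cdot)\|_{L^{1}(\mathbb{T}^{3})}$. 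Because $\sum_{k\in\mathbb{Z}^{3}}\langle k\rangle^{-4}<\infty$ (here $4>3=\dim\mathbb{T}^{3}$, which is exactly why four $x$-derivatives suffice), summing over $k$ gives $\sup_{x}|f(v,x)|\le\sum_{k}|\hat f(v,k)|\lesssim\sum_{|\alpha|\le 4}\|\partial_{x}^{\alpha}f(v,\cdot)\|_{L^{1}(\mathbb{T}^{3})}$. Integrating this in $v$ and using Tonelli, $\sup_{x}\|f(\cdot,x)\|_{L^{1}(\mathbb{R}^{3})}=\sup_{x}\int|f(v,x)|\,d^{3}v\le\int\sup_{x}|f(v,x)|\,d^{3}v\lesssim\sum_{|\alpha|\le 4}\|\partial_{x}^{\alpha}f\|_{L^{1}(\mathbb{R}^{3}\times\mathbb{T}^{3})}$, which is \eqref{eq:Poincare}.

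\emph{The bilinear estimate \eqref{eq:AppliEmbed}.} The collision operator \eqref{eq:difColli} acts on $x$ only through its two arguments, both evaluated at the same point $x$, so the Leibniz rule yields $\partial_{x}^{\alpha}Q(f,g)=\sum_{\alpha_{1}+\alpha_{2}=\alpha}\binom{\alpha}{\alpha_{1}}Q(\partial_{x}^{\alpha_{1}}f,\partial_{x}^{\alpha_{2}}g)$, with $|\alpha_{1}|+|\alpha_{2}|=|\alpha|\le 8$; in particular at least one of $|\alpha_{1}|,|\alpha_{2}|$ is $\le 4$, say $|\alpha_{2}|\le 4$ (the other case is symmetric). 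For such a term I would integrate over $x\in\mathbb{T}^{3}$ the velocity-space estimate \eqref{eq:estNonL} of Lemma~\ref{LM:EstNonline}, applied for each fixed $x$ to the velocity functions $\partial_{x}^{\alpha_{1}}f(\cdot,x)$ and $\partial_{x}^{\alpha_{2}}g(\cdot,x)$. Each of the two resulting $x$-integrals of a product is then split by Hölder in $x$, placing the $L^{\infty}_{x}$ norm on the factor built from $\partial_{x}^{\alpha_{2}}g$ (unweighted in one term, carrying the $\langle v\rangle^{m+1}$ weight in the other) and bounding that $L^{\infty}_{x}L^{1}_{v}$ norm by \eqref{eq:Poincare}. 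This costs at most four further $x$-derivatives on the $g$-factor, so its total $x$-derivative count stays $\le|\alpha_{2}|+4\le 8$, while the $f$-factor retains $\le 8$ $x$-derivatives and remains in $L^{1}(\mathbb{R}^{3}\times\mathbb{T}^{3})$; the loss and gain contributions to $Q$ need not be separated, since \eqref{eq:estNonL} already absorbs both. Summing over $\alpha_{1}+\alpha_{2}=\alpha$ produces exactly the right-hand side of \eqref{eq:AppliEmbed}.

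The genuinely quantitative content — that the hard-sphere kernel costs only a single power of $\langle v\rangle$ in the $L^{1}$-velocity norm — is not re-derived here but imported from Lemma~\ref{LM:EstNonline} (proved in Appendix~\ref{sec:nonlinear}). The one point requiring care, and the only (minor) obstacle I anticipate, is the derivative bookkeeping: since \eqref{eq:Poincare} trades an $L^{\infty}$-in-$x$ bound for an $L^{1}$-in-$x$ bound at the price of up to four extra $x$-derivatives, one must always move the $L^{\infty}_{x}$ norm onto whichever argument of $Q$ carries at most four $x$-derivatives — which is possible precisely because $|\alpha|\le 8$. Beyond this purely combinatorial check the argument is routine.
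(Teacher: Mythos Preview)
Your proposal is correct and follows essentially the same route as the paper. For \eqref{eq:Poincare} both arguments use the Fourier expansion on $\mathbb{T}^{3}$, the bound $\langle k\rangle^{4}|\hat f(v,k)|\lesssim\sum_{|\alpha|\le 4}\|\partial_{x}^{\alpha}f(v,\cdot)\|_{L^{1}(\mathbb{T}^{3})}$, and the summability of $\langle k\rangle^{-4}$ over $\mathbb{Z}^{3}$; the only cosmetic difference is that you first obtain the pointwise-in-$v$ bound $\sup_{x}|f(v,x)|$ and then integrate (via $\sup_{x}\int\le\int\sup_{x}$), whereas the paper works directly with $\|f_{k}\|_{L^{1}(\mathbb{R}^{3})}$. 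For \eqref{eq:AppliEmbed} the paper does exactly what you outline: Leibniz, the velocity estimate \eqref{eq:estNonL} at each fixed $x$, then H\"older in $x$ with the $L^{\infty}_{x}$ placed on whichever factor carries at most four $x$-derivatives, bounded via \eqref{eq:Poincare}.
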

\begin{proof}
We start with the proof of ~\eqref{eq:Poincare}. We Fourier-expand the function $f\in L^{1}(\mathbb{R}^{3}\times \mathbb{T}^{3})$ in the variable $x$:
$$f(v,x)= \sum_{{\bf{n}}\in \mathbb{Z}^{3}}e^{i{\bf{n}}\cdot x} f_{{\bf{n}}}(v)$$ with $f_{{\bf{n}}}(v):=\frac{1}{(2\pi)^{3}} \int_{\mathbb{T}^{3}} f(v,x) e^{-i{\bf{n}}\cdot x}\ dx.$ Obviously
\begin{equation}\label{eq:rewrite}
\|f(\cdot,x)\|_{L^{1}(\mathbb{R}^{3})} \lesssim \sum_{{\bf{n}}\in \mathbb{Z}^{3}} \|f_{{\bf{n}}}\|_{L^{1}(\mathbb{R}^{3})} .
\end{equation}

We now write $\|f_{{\bf{n}}}\|_{L^{1}(\mathbb{R}^3)}$ as a product of $\frac{1}{(1+|{\bf{n}}|)^4}$ and $(1+|{\bf{n}}|)^{4}\|f_{{\bf{n}}}\|_{L^{1}(\mathbb{R}^3)}.$
To control the factor $(1+|{\bf{n}}|)^{4}f_{{\bf{n}}}$ in $L^{1}$ we use the observation that
$$
\begin{array}{lll}
(1+|{\bf{n}}|^{4}) |f_{n}| &=&(1+|{\bf{n}}|^{4}) (2\pi)^{-3} |\langle f,\ e^{i{\bf{n}}\cdot x}\rangle_{\mathbb{T}^{3}}|\\
& &\\
&\lesssim & \displaystyle\sum_{|\alpha|\leq 4} |\langle \partial_{x}^{\alpha} f,\ e^{i{\bf{n}}\cdot x}\rangle_{\mathbb{T}^{3}}|
\end{array}
$$
to obtain that
\begin{equation}\label{eq:embd}
(1+|{\bf{n}}|)^{4} \|f_{n}\|_{L^{1}}\lesssim  \sum_{|\alpha|\leq 4} \|\partial_{x}^{\alpha}f\|_{L^{1}(\mathbb{R}^{3}\times \mathbb{T}^{3})}.
\end{equation}
This, together with the fact that
$
\displaystyle\sum_{{\bf{n}}\in \mathbb{Z}^{3}} \frac{1}{(1+|{\bf{n}}|)^{4}} <\infty
$ and with ~\eqref{eq:rewrite}, implies the desired estimate.

Next we prove ~\eqref{eq:AppliEmbed}. It is easy to see that $$\langle v\rangle^{m}|\partial_{x}^{\alpha}Q(f,g)|\lesssim \sum_{\beta_1+\beta_2=\alpha}|\langle v\rangle^{m}Q(\partial_{x}^{\beta_1}f, \ \partial_{x}^{\beta_2}g)|.$$
Obviously
\begin{equation}
\|\langle v\rangle^{m}Q(\partial_{x}^{\beta_1}f, \ \partial_{x}^{\beta_2}g)\|_{L^{1}(\mathbb{R}^{3}\times \mathbb{T}^{3})}=\|\ \|\langle v\rangle^{m}Q(\partial_{x}^{\beta_1}f, \ \partial_{x}^{\beta_2}g)\|_{L^{1}(\mathbb{R}^{3})}\|_{L^{1}(\mathbb{T}^{3})}.
\end{equation} We apply ~\eqref{eq:estNonL} to obtain
\begin{equation}\label{eq:nonLest}
\|\langle v\rangle^{m}Q(\partial_{x}^{\beta_1}f, \ \partial_{x}^{\beta_2}g)\|_{L^{1}(\mathbb{R}^{3})}\lesssim \|\langle v\rangle^{m+1}\partial_{x}^{\beta_1}f\|_{L^{1}(\mathbb{R}^{3})}\|\partial_{x}^{\beta_2}g\|_{L^{1}(\mathbb{R}^{3})}+\|\langle v\rangle^{m+1}\partial_{x}^{\beta_1}g\|_{L^{1}(\mathbb{R}^{3})}\|\partial_{x}^{\beta_2}f\|_{L^{1}(\mathbb{R}^{3})}.
\end{equation}
In the next we estimate the right hand side of ~\eqref{eq:nonLest} in $L^{1}(\mathbb{T}^3).$ Since $\beta_1+\beta_2=\alpha$ and $|\alpha|\leq 8,$ at least one of $|\beta_1|,\ |\beta_{2}|$ is less than or equal to $4$. Without loss of generality, we assume that $|\beta_1|\leq 4.$ Applying ~\eqref{eq:Poincare} to the first term on the right hand side we find that
\begin{equation}
\begin{array}{lll}
\|\ \|\langle v\rangle^{m+1}\partial_{x}^{\beta_1}f\|_{L^{1}(\mathbb{R}^{3})}\|\partial_{x}^{\beta_2}g\|_{L^{1}(\mathbb{R}^{3})}
\|_{L^{1}(\mathbb{T}^{3})}&\leq&\displaystyle\max_{x\in \mathbb{T}^3}\|\langle v\rangle^{m+1}\partial_{x}^{\beta_1}f\|_{L^{1}(\mathbb{R}^{3})} \|\partial_{x}^{\beta_2}g\|_{L^{1}(\mathbb{R}^{3}\times \mathbb{T}^3)}\\
& &\\
&\lesssim &\displaystyle\sum_{|\beta_1|, \ |\beta_2|\leq 8} \|\langle v\rangle^{m+1}\partial_{x}^{\beta_1}f\|_{L^{1}(\mathbb{R}^{3}\times \mathbb{T}^{3})} \|\partial_{x}^{\beta_2}g\|_{L^{1}(\mathbb{R}^{3}\times \mathbb{T}^{3})}
\end{array}
\end{equation}
The second term on the right hand side can be estimated almost identically.

Collecting the estimates above we complete the proof of ~\eqref{eq:AppliEmbed}.

\end{proof}
%%%%%%%%%%%%%%%%%%%%%%%%%%%%%%%%
\section{Propagator Estimates}\label{sec:propagatorEst}
Recall the definition of the linear operator $L$ in ~\eqref{eq:difL}. In this section, we study decay estimates of the operator $e^{-tL}(1-P_{0})$ acting on $L^{1}$, where $P_{0}: L^{1}(\mathbb{R}^{3}\times \mathbb{T}^{3})\rightarrow L^{1}(\mathbb{R}^{3}\times \mathbb{T}^{3})$ is the Riesz projection onto the 0-eigenspace, $\{e^{-|v|^2}\}:$
\begin{equation}\label{eq:difProjection}
P_{0}f:= \frac{1}{8\pi^{\frac{7}{2}}} e^{-|v|^2} \int_{\mathbb{R}^{3}\times \mathbb{T}^{3}} f(v,x)\ d^3 v d^3 x.
\end{equation}

The main theorem of this section is
\begin{theorem}\label{THM:propagator}
There exist constants $C_0,\ C_{1}>0$ and an integer $m<\infty$ such that
\begin{equation}
\|e^{-tL}(1-P_0)g\|_{L^{1}(\mathbb{R}^{3}\times \mathbb{T}^{3})}\leq C_{1}e^{-C_0t} \|\langle v\rangle^{m}g\|_{L^{1}(\mathbb{R}^{3}\times \mathbb{T}^{3})}.
\end{equation}
\end{theorem}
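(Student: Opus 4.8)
The plan is to combine a Fourier decomposition in the spatial variable $x \in \mathbb{T}^3$ with a spectral/resolvent analysis of the resulting family of operators on velocity space. The nonzero Fourier modes decouple from the zero mode, and one expects a spectral gap for each of them uniformly in the mode index, so that the bulk of the work concentrates on the zero Fourier mode, where the eigenvalue $0$ (the Maxwellian) sits and must be projected out.

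\textbf{Step 1: Fourier decomposition in $x$.} Write $g(v,x) = \sum_{{\bf n}\in\mathbb{Z}^3} e^{i{\bf n}\cdot x} g_{\bf n}(v)$. Since $K$ and $\nu$ act only on the $v$-variable and $v\cdot\nabla_x$ acts diagonally in Fourier modes, the operator $L$ is block-diagonal: on the ${\bf n}$-th block it acts as $L_{\bf n} := \nu + i(v\cdot{\bf n}) + K$ on $L^1(\mathbb{R}^3)$. The projection $P_0$ lives entirely in the ${\bf n}=0$ block. Thus $e^{-tL}(1-P_0)g = \sum_{\bf n} e^{i{\bf n}\cdot x}\, e^{-tL_{\bf n}}(1-\delta_{{\bf n},0}P_0^{(0)}) g_{\bf n}$, and it suffices to prove a decay bound $\|e^{-tL_{\bf n}}(1-\delta_{{\bf n},0}P_0^{(0)})\|_{L^1(\langle v\rangle^m) \to L^1} \le C_1 e^{-C_0 t}$ with constants \emph{uniform in ${\bf n}$}; summing against the weight $\langle v\rangle^m$ together with the $\ell^1$-summability trick from the proof of Lemma~\ref{LM:embedding} (absorbing powers $(1+|{\bf n}|)^{-4}$ by paying eight $x$-derivatives on $g$) recovers the stated estimate, which is why the final $m$ and the loss of $x$-derivatives are acceptable.

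\textbf{Step 2: Spectral picture for each $L_{\bf n}$.} The operator $\nu(v)$ is multiplication by a function bounded below by $\Lambda(1+|v|)$ (Lemma~\ref{LM:EstNonline}), so $\nu + i(v\cdot{\bf n})$ generates a contraction semigroup with $\|e^{-t(\nu+i v\cdot{\bf n})}\|_{L^1\to L^1}\le e^{-\Lambda t}$, and its spectrum fills the half-plane $\{\mathrm{Re}\, z \ge \Lambda\}$. The operators $K_0, K_1$ are relatively compact perturbations (the estimates \eqref{eq:estK0}, \eqref{eq:mK1} show they gain a power of $\langle v\rangle$ relative to $\nu$, so $K(\nu + v\cdot\nabla_x - z)^{-1}$ is compact by an Arzelà--Ascoli / approximation argument on $L^1$, modulo the standard care needed in $L^1$). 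Hence the spectrum of $L_{\bf n}$ in the strip $\{\mathrm{Re}\,z < \Lambda\}$ consists of isolated eigenvalues of finite multiplicity. One must then show: (i) for ${\bf n}=0$, the only eigenvalue with $\mathrm{Re}\,z \le 0$ is $z=0$, it is simple with eigenfunction $M=e^{-|v|^2}$, and the rest of the spectrum in $\{\mathrm{Re}\,z < \Lambda\}$ has real part bounded below by some $2C_0>0$; (ii) for ${\bf n}\ne 0$, $L_{\bf n}$ has \emph{no} spectrum in $\{\mathrm{Re}\,z \le 2C_0\}$, with a bound uniform in ${\bf n}$. Point (i) is where detailed balance \eqref{eq:DetailBal} enters: it makes $L_0 = \nu_0 - K_0$ and the linearized collision operator symmetric (formally self-adjoint) with respect to the measure $M(v)^{-1}d^3v$, forcing the low-lying spectrum onto the real axis and identifying the kernel; the $H$-theorem / entropy dissipation or an explicit coercivity estimate for $L_0 + C_\infty\kappa L_1$ on the orthogonal complement of $M$ gives the gap $2C_0$, and for $\kappa$ small it is a perturbation of the gap of $L_0$ alone (which may be quoted from \cite{Ark1988, Mouhot2006} per Remark (C)). For (ii), the extra term $i(v\cdot{\bf n})$ has no kernel issue and one argues that $K$ cannot create an eigenvalue near the imaginary axis for ${\bf n}\ne0$ because the would-be eigenfunction equation $(\nu + iv\cdot{\bf n} + K)\phi = z\phi$ with $\mathrm{Re}\,z$ small forces, via the self-adjoint structure, $\phi \propto M$, which then fails the equation since $v\cdot{\bf n}\, M \ne 0$; uniformity in large $|{\bf n}|$ follows from the large-$|{\bf n}|$ limit, where $L_{\bf n}$ is dominated by $\nu + iv\cdot{\bf n}$ and the spectrum retreats to $\mathrm{Re}\,z\ge\Lambda$.

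\textbf{Step 3: From spectral gap to semigroup decay in $L^1$, via the resolvent contour.} This is the technically hardest part and the one the introduction flags explicitly around \eqref{eq:spectralTHMini}: on $L^1$ one does not have the Hilbert-space spectral theorem, so one must prove that the contour integral $-\frac{1}{2\pi i}\oint_\Gamma e^{-tz}(L_{\bf n}-z)^{-1}\,dz$ converges strongly and equals $e^{-tL_{\bf n}}(1-P_0^{(0)})$, with quantitative control. The plan is to move the contour $\Gamma$ to the vertical line $\mathrm{Re}\,z = C_0$ (to the left of all remaining spectrum, to the right of $0$), which immediately produces the factor $e^{-C_0 t}$, and then to control $\int_{\mathrm{Re}\,z=C_0}\|(L_{\bf n}-z)^{-1}\|\,|dz|$ using the second-resolvent identity $(L_{\bf n}-z)^{-1} = (\nu + iv\cdot{\bf n} - z)^{-1}\big(1 + K(L_{\bf n}-z)^{-1}\big)$: the free resolvent $(\nu+iv\cdot{\bf n}-z)^{-1}$ is multiplication by $(\nu(v) + i\,v\cdot{\bf n} - z)^{-1}$, which decays like $1/|\mathrm{Im}\,z|$ and is $L^1$-bounded for $\mathrm{Re}\,z=C_0 < \Lambda$, and the remainder $(1-K(L_{\bf n}-z)^{-1})^{-1}$ is bounded on the line by a Neumann-series/compactness argument once one knows there is no spectrum there. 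To make the $z$-integral converge absolutely one pays the weight $\langle v\rangle^m$ on $g$ — the gain-of-one-power estimates \eqref{eq:estK0}–\eqref{eq:mK1} let one trade $\langle v\rangle$-decay for extra factors of $1/(\nu(v)+|z|)$, i.e. extra $|z|^{-1}$ decay along the contour, and iterating $K(L-z)^{-1}$ a finite number of times (hence a finite $m$) yields an absolutely convergent integral. The $\delta$-subtlety near $\mathrm{Im}\,z\to\infty$ and the need for the weight is exactly what the authors refer to in the introduction; assembling the uniform-in-${\bf n}$ bounds from Step 2 into a single geometric series in ${\bf n}$ then completes the proof. \textbf{The main obstacle} I anticipate is precisely this $L^1$ resolvent estimate along the critical line — in particular getting decay in $\mathrm{Im}\,z$ that is summable, uniformly in the Fourier mode ${\bf n}$, without the comfort of an $L^2$ spectral calculus — and secondarily the careful verification that no eigenvalue of $L_{\bf n}$ crawls toward the imaginary axis as $|{\bf n}|$ ranges over all of $\mathbb{Z}^3$.
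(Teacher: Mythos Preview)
Your Fourier decomposition in $x$ and the spectral picture for each $L_{\bf n}$ are on target and match the paper's setup. However, there is a genuine gap in Step~1/Step~3: you propose to obtain summability over ${\bf n}\in\mathbb{Z}^3$ by ``absorbing powers $(1+|{\bf n}|)^{-4}$ by paying eight $x$-derivatives on $g$'' via the embedding lemma. But the theorem grants you only the velocity weight $\langle v\rangle^m$ on $g$, \emph{no} $x$-derivatives. After the triangle inequality you face $\sum_{\bf n}\|e^{-tL_{\bf n}}g_{\bf n}\|_{L^1(\mathbb{R}^3)}$, and even a perfectly uniform-in-${\bf n}$ bound $\|e^{-tL_{\bf n}}g_{\bf n}\|_{L^1}\le Ce^{-C_0t}\|\langle v\rangle^m g_{\bf n}\|_{L^1}$ leaves you with $\sum_{\bf n}\|\langle v\rangle^m g_{\bf n}\|_{L^1(\mathbb{R}^3)}$, which is \emph{not} controlled by $\|\langle v\rangle^m g\|_{L^1(\mathbb{R}^3\times\mathbb{T}^3)}$. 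So the plan as written cannot close.

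The missing idea, which is the heart of the paper's argument, is an \emph{oscillatory decay} in $|{\bf n}|$ extracted from the free evolution sandwiched between two $K$'s: integrating by parts in the velocity variable against the phase $e^{-it{\bf n}\cdot z}$ yields
\[
\bigl\|K\,e^{-t(\nu+i{\bf n}\cdot v)}\,K\, f\bigr\|_{L^1(\mathbb{R}^3)}\ \lesssim\ \frac{e^{-C_0t}}{1+|{\bf n}|t}\,\|\langle v\rangle^3 f\|_{L^1(\mathbb{R}^3)}
\]
(Lemma~\ref{LM:oscillate}). To make room for enough such factors, the paper first expands $e^{-tL}(1-P_0)$ by Duhamel to order $12$ (equation~\eqref{eq:durha}): the finite pieces $A_k$ are bounded directly on $L^1(\mathbb{R}^3\times\mathbb{T}^3)$ without any Fourier decomposition (Proposition~\ref{prop:easyEst}), while the remainder $\tilde{A}$ contains one factor $e^{-(t-s_1)L_{\bf n}}$ and six sandwiches $K e^{-(s_j-s_{j+1})(\nu+i{\bf n}\cdot v)}K$. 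The contour-integral bound on $e^{-tL_{\bf n}}$ (Lemma~\ref{LM:roughEst}) is \emph{not} uniform in ${\bf n}$---it grows like $(1+|{\bf n}|)$ because the contour $\Gamma_{\bf n}$ has length $\sim|{\bf n}|$---but the six oscillatory factors produce, after time integration, $(1+|{\bf n}|)^{-5}$, and the net $(1+|{\bf n}|)^{-4}$ is summable against the trivial bound $\|\langle v\rangle^{20} g_{\bf n}\|_{L^1}\le(2\pi)^3\|\langle v\rangle^{20} g\|_{L^1}$. Your resolvent-on-a-vertical-line scheme is the Laplace-dual of this picture, but the mechanism you describe (trading $\langle v\rangle$-weights for $|z|^{-1}$ decay) does not by itself produce decay in $|{\bf n}|$; you would still need the analogue of Lemma~\ref{LM:oscillate}, and the finite-order Duhamel/Neumann expansion, to make the argument close without $x$-regularity on $g$.
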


We first outline the general strategy of the proof.

There are two typical approaches to proving decay estimates for propagators. The first one is to apply the spectral theorem, (see e.g. ~\cite{RSI}), to obtain $$e^{-tL}(1-P_0)=\frac{1}{2\pi i}\oint_{\Gamma} e^{-t\lambda}(\lambda -L)^{-1}\ d\lambda$$ where the contour $\Gamma$ is a curve encircling the spectrum of $L(1-P_0).$ The obstacle is that the spectrum of $L(1-P_0)$ occupies the entire right half of the complex plane, except for a strip in a neighborhood of the imaginary axis, as illustrated in Figure ~\ref{fig:FigureExample} below. This makes it difficult to prove strong convergence on $L^1$ of the integral on the right hand side.
\begin{figure}[htb!]
\centering%
\includegraphics[width=13cm]{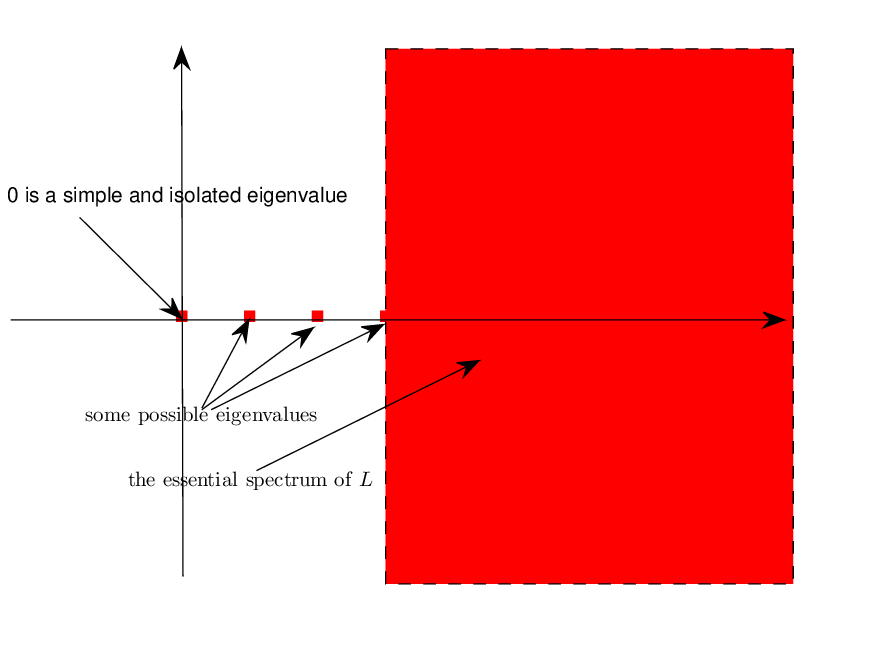}
\caption{The Spectrum of $L$}
\label{fig:FigureExample}
\end{figure}

The second approach is to use perturbation theory, which amounts to expanding $e^{-tL}$ in powers of the operator $K,$ (see ~\eqref{eq:difK}): $$e^{-tL}=e^{-t(\nu+v\cdot\nabla_{x})}+\int_{0}^{t}e^{-(t-s)(\nu+v\cdot\nabla_x)}K e^{-s(\nu+v\cdot\nabla_x)}\ ds+\cdots.$$ It will be shown in Proposition ~\ref{prop:easyEst} that each term in this expansion can be estimated quite well, but the fact that $K$ is unbounded forces us to estimate them in different spaces.

We will combine these two approaches to prove Theorem ~\ref{THM:propagator}.

We expand the propagator $e^{-tL}(1-P_0)$ using Duhamel's principle:
\begin{equation}\label{eq:durha}
e^{-tL}(1-P_0)= \sum_{k=0}^{12} (1-P_0)A_{k}(t)+(1-P_0)\tilde{A}(t),
\end{equation}
where the operators $A_{k}$ are defined recursively, with
\begin{equation}\label{eq:difA0}
A_{0}=A_{0}(t):= e^{-t (\nu+v\cdot \nabla_{x})},
\end{equation} and $A_{k},\ k=1,2,\cdots,12,$ given by
\begin{equation}
A_{k}(t):= \int_{0}^{t} e^{-(t-s)(\nu+v\cdot \nabla_{x})}K A_{k-1}(s)\ ds .
\end{equation}
Finally $\tilde{A}$ is defined by
\begin{equation}
\tilde{A}(t)=\int_{0}^{t} e^{-(t-s)L}K A_{12}(s)\ ds.
\end{equation}
The exact form of $A_{k},\ k=0,\ 1,\cdots, 12,$ implies the following estimates.
\begin{proposition}\label{prop:easyEst}
There exist positive constants $C_0$ and $C_1$ such that, for any function $f: \ \mathbb{R}^{3}\times \mathbb{T}^{3}\rightarrow \mathbb{C},$
\begin{equation}\label{eq:explEst}
\|A_{k}(t)f\|_{L^{1}(\mathbb{R}^{3}\times \mathbb{T}^3)}\leq C_1 e^{-C_0 t} \|\langle v\rangle^{k}f\|_{L^{1}(\mathbb{R}^{3}\times \mathbb{T}^3)}.
\end{equation}
\end{proposition}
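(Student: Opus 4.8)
\textbf{Proof proposal for Proposition \ref{prop:easyEst}.}

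The plan is to prove the estimate \eqref{eq:explEst} by induction on $k$, using the explicit structure of the operators $A_k$ together with the two basic facts already at our disposal: first, that the free transport semigroup $e^{t v\cdot\nabla_x}$ is an isometry on $L^1(\mathbb{R}^3\times\mathbb{T}^3)$ (and more generally preserves the weighted $L^1$ norms in $v$, since it acts only on the $x$-variable); and second, that $\nu = \nu_0 + C_\infty\kappa\nu_1$ satisfies the lower bound $\nu(v)\geq \Lambda(1+|v|)\geq\Lambda\langle v\rangle/\sqrt 2$ from \eqref{eq:globalLower}. The latter gives the pointwise decay $e^{-(t-s)\nu(v)}\leq e^{-C_0(t-s)}e^{-\frac{1}{2}(t-s)\nu(v)}$ for a suitable $C_0>0$, and the residual factor $e^{-\frac12(t-s)\nu}$ is what we will use to absorb the growth in $\langle v\rangle$ produced by $K$.

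The base case $k=0$ is immediate: since $e^{-t(\nu+v\cdot\nabla_x)}f(v,x) = e^{-t\nu(v)}f(v,x-tv)$ pointwise, and $\nu(v)\geq C_0$ uniformly, we get $\|A_0(t)f\|_{L^1}\leq e^{-C_0 t}\|f\|_{L^1}$, which is \eqref{eq:explEst} with $k=0$ (the weight $\langle v\rangle^0=1$). For the inductive step, suppose \eqref{eq:explEst} holds for $k-1$. Writing out $A_k(t)f = \int_0^t e^{-(t-s)(\nu+v\cdot\nabla_x)}K A_{k-1}(s)f\,ds$ and using that $e^{-(t-s)v\cdot\nabla_x}$ preserves $L^1$, we bound
\[
\|A_k(t)f\|_{L^1}\leq \int_0^t \big\|\,e^{-(t-s)\nu}\,|K A_{k-1}(s)f|\,\big\|_{L^1}\,ds.
\]
Now we use $e^{-(t-s)\nu(v)}\leq e^{-\frac12 C_0(t-s)}\,\langle v\rangle^{-1}\cdot\big(\langle v\rangle e^{-\frac12(t-s)\nu(v)}\big)$ and note that $\sup_v \langle v\rangle e^{-\frac12(t-s)\nu(v)}\lesssim (t-s)^{-1}$ is not integrable at $s=t$; so instead the cleaner route is to keep one power of $\langle v\rangle^{-1}$ from $\nu^{-1}$ after an integration, exactly as in \eqref{eq:integrationP}: one shows $\int_0^t e^{-(t-s)\nu}\,ds \leq \nu^{-1}\lesssim \langle v\rangle^{-1}$ pointwise, hence
\[
\|A_k(t)f\|_{L^1}\lesssim e^{-C_0 t}\,\big\|\langle v\rangle^{-1}\,\sup_{0\leq s\leq t} e^{C_0 s}|K A_{k-1}(s)f|\,\big\|_{L^1},
\]
after extracting an overall $e^{-C_0 t}$ from the worst exponential. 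It then remains to control $\|\langle v\rangle^{m}\langle v\rangle^{-1}K h\|_{L^1}$ in terms of $\|\langle v\rangle^{m}h\|_{L^1}$ — but this is precisely what the bounds \eqref{eq:estK0} and \eqref{eq:mK1} of Lemma \ref{LM:EstNonline} give, since $K = -K_0 + C_\infty\kappa K_1$ and $K_1$ loses exactly one power of $\langle v\rangle$ while $K_0$ loses none; applied with $h = A_{k-1}(s)f$ and the inductive hypothesis $\|\langle v\rangle^{k-1}A_{k-1}(s)f\|_{L^1}\leq C_1 e^{-C_0 s}\|\langle v\rangle^{k-1}f\|_{L^1}$, this closes the induction, with the weight increasing by one unit at each step so that $A_k$ requires $\langle v\rangle^k f$, consistent with \eqref{eq:explEst}.

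The main obstacle — and the reason the argument is not completely routine — is bookkeeping the interplay between the unboundedness of $K$ (which costs one power of $\langle v\rangle$ per application through $K_1$, and none through $K_0$) and the regularizing factor $\nu^{-1}\sim\langle v\rangle^{-1}$ generated by the time integration against the transport semigroup. The integration-by-parts identity used in \eqref{eq:integrationP} is the key technical device: it converts $\int_0^t e^{-(t-s)\nu}(\cdots)\,ds$ into a pointwise multiplication by $\nu^{-1}$, which supplies exactly the one power of $\langle v\rangle^{-1}$ needed to offset the loss in $K_1$ and keep the weight from growing faster than linearly in $k$. One must also verify that the decay rate $C_0$ can be chosen uniformly in $k\leq 12$ — this is fine because there are only finitely many steps and at each step we may split off, say, half of the gap $\mathrm{dist}(\sigma(\nu),\{0\})$; the constant $C_1$ then absorbs the finitely many multiplicative constants $C_m$ from Lemma \ref{LM:EstNonline} and the factors of $\kappa, C_\infty$. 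No subtlety arises from the $x$-variable or the torus, since every operator in sight either acts diagonally in $x$ ($\nu$, $K$) or is an $x$-translation ($e^{t v\cdot\nabla_x}$), both isometric on $L^1(\mathbb{T}^3)$.
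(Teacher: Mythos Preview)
Your overall strategy---induction on $k$, using that $K$ costs one power of $\langle v\rangle$ and the free semigroup decays---is the right one, and is in spirit what the paper does. But the execution has two genuine gaps.

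First, the displayed bound involving $\sup_{0\leq s\leq t}e^{C_0 s}|KA_{k-1}(s)f|$ inside the $L^1$ norm cannot be closed the way you suggest. The estimates \eqref{eq:estK0} and \eqref{eq:mK1} from Lemma \ref{LM:EstNonline} are $L^1$ bounds on $\|Kh\|_{L^1}$, not pointwise bounds on $|Kh|(v,x)$; you cannot pull $K$ outside the pointwise supremum over $s$, and in general $\|\sup_s g_s\|_{L^1}$ is \emph{larger} than $\sup_s\|g_s\|_{L^1}$, so there is no way to reduce to a single-time bound on $\|\langle v\rangle^{-1}Kh\|_{L^1}$. The $\nu^{-1}$ trick from \eqref{eq:integrationP} is used in the paper for a different purpose (controlling a time-integrated norm) and does not help here.

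Second, your stated inductive hypothesis, $\|\langle v\rangle^{k-1}A_{k-1}(s)f\|_{L^1}\leq C_1 e^{-C_0 s}\|\langle v\rangle^{k-1}f\|_{L^1}$, is not the proposition's statement for $k-1$ (which has no weight on the left and $\langle v\rangle^{k-1}$ on the right). If you want to induct, you must strengthen the hypothesis to something like $\|\langle v\rangle^{m}A_{j}(s)f\|_{L^1}\lesssim_{m,j} e^{-C_0 s}\|\langle v\rangle^{m+j}f\|_{L^1}$ for all $m\geq 0$.

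The paper's actual proof is far simpler and avoids both issues: it just uses the crude bound $e^{-t\nu}\leq e^{-\Lambda t}$ (no $\nu^{-1}$), then iterates directly. For $A_1$ one gets $\|A_1(t)f\|_{L^1}\leq \int_0^t e^{-\Lambda(t-s)}\|K e^{-s(\nu+v\cdot\nabla_x)}f\|_{L^1}\,ds\lesssim \int_0^t e^{-\Lambda(t-s)}e^{-\Lambda s}\,ds\,\|\langle v\rangle f\|_{L^1}=t\,e^{-\Lambda t}\|\langle v\rangle f\|_{L^1}$, and similarly $\|A_k(t)f\|_{L^1}\lesssim \frac{t^k}{k!}e^{-\Lambda t}\|\langle v\rangle^k f\|_{L^1}$. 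The polynomial factor is harmless since $t^k e^{-\Lambda t}\leq C_1 e^{-C_0 t}$ for any $C_0<\Lambda$. No integration by parts, no pointwise suprema, no weighted inductive hypothesis beyond the obvious one are needed.
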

This proposition is proven in Subsection ~\ref{subsec:ProofEEst}.

The estimate on $\tilde{A}$, which, by definition, is given by
$$\tilde{A}=\int_{0}^{t} e^{-(t-s_1)L}K \int_{0}^{s_1} e^{-(s_1-s_2) (\nu+v\cdot\nabla_{x})}K\cdots \int_{0}^{s_{12}}e^{-(s_{12}-s_{13})(\nu+v\cdot\nabla_{x})}K e^{-s_{13}(\nu+v\cdot\nabla_{x})}\ ds_{13}\cdots ds_1, $$ is more involved.

We first transform $\tilde{A}$ to a more convenient form.

One of the important properties of the operators $L$ and $L_{0}$ is that, for any function $g:\ \mathbb{R}^{3}\rightarrow \mathbb{C}$ (i.e., independent of $x$) and ${\bf{n}}\in \mathbb{Z}^3,$ we have that
\begin{equation}\label{eq:observation}
\begin{array}{lll}
P_{0}e^{i{\bf{n}}\cdot x}g&=&0\ \ \text{if}\ {\bf{n}}\not=0,\\
& &\\
L e^{i{\bf{n}}\cdot x}g&=&e^{i{\bf{n}}\cdot x}L_{{\bf{n}}}g,\\
& &\\
(\nu+v\cdot\nabla_x)e^{i{\bf{n}}\cdot x}g&=&e^{i{\bf{n}}\cdot x}(\nu+i{\bf{n}}\cdot v)g,
\end{array}
\end{equation} where the operator $L_{{\bf{n}}}$ is unbounded and defined as $$L_{{\bf{n}}}:=\nu+i{\bf{n}}\cdot v+K.$$ (Recall that $P_0$ has been defined in ~\eqref{eq:difProjection}.)

To make ~\eqref{eq:observation} applicable, we Fourier-expand the function $g:\ \mathbb{R}^{3}\times \mathbb{T}^{3}\rightarrow \mathbb{C}$ in the variable $x,$ i.e.,
\begin{equation}\label{eq:dirSum}
g(v,x)=\sum_{{\bf{n}}\in \mathbb{Z}^{3}} e^{i{\bf{n}}\cdot x} g_{{\bf{n}}}(v).
\end{equation} Then ~\eqref{eq:rewrite} and ~\eqref{eq:observation} yield the bound
\begin{equation}\label{eq:decomA}
\|(1-P_0)\tilde{A}g\|_{L^{1}(\mathbb{R}^3\times \mathbb{T}^3)}\leq \sum_{{\bf{n}}\in \mathbb{Z}^{3}}\|\tilde{A}_{{\bf{n}}} g_{n}\|_{L^{1}(\mathbb{R}^3)},
\end{equation} where $\tilde{A}_{{\bf{n}}}$ is defined as follows: If ${\bf{n}}\not=(0,0,0)$ then
$$
\tilde{A}_{{\bf{n}}}:=\int_{0}^{t} e^{-(t-s_1)L_{{\bf{n}}}}K \int_{0}^{s_1} e^{-(s_1-s_2) (\nu+i v\cdot{\bf{n}})}K\cdots \int_{0}^{s_{12}}e^{-(s_{12}-s_{13})(\nu+iv\cdot{\bf{n}})}K e^{-s_{13}(\nu+iv\cdot{\bf{n}})}\ ds_{13}\cdots ds_1
$$ and for ${\bf{n}}=(0,0,0)$ we define $$
\tilde{A}_{0}:=\int_{0}^{t} (1-P_0)e^{-(t-s_1)L_{0}}K \int_{0}^{s_1} e^{-(s_1-s_2) \nu}K\cdots \int_{0}^{s_{12}}e^{-(s_{12}-s_{13})\nu}K e^{-s_{13}\nu}\ ds_{13}\cdots ds_1.
$$

Next, we study $\tilde{A}_{{\bf{n}}}$, which is defined in terms of the operators $e^{-tL_{{\bf{n}}}}$, $e^{-t[\nu+i {\bf{n}}\cdot v]}$ and $Ke^{-t[\nu+i{\bf{n}}\cdot v]}K.$

It is easy to estimate $e^{-t[\nu+i{\bf{n}}\cdot v]}:$ The fact that the function $\nu$ has a positive global minimum $\Lambda$ (see ~\eqref{eq:globalLower}) implies that
\begin{equation}\label{eq:exactForm}
 \|e^{-t[\nu+i{\bf{n}}\cdot v]}\|_{L^{1}\rightarrow L^1}\leq e^{-\Lambda t}.
\end{equation}
We provide some rough estimate on the operator $e^{-tL_{{\bf{n}}}}$.
\begin{lemma}\label{LM:roughEst}
If ${\bf{n}}\not= (0,0,0)$ then there exist positive constants $C_0$ and $C_1$ such that
\begin{equation}\label{eq:est000}
\|e^{-tL_{{\bf{n}}}}\|_{L^{1}(\mathbb{R}^{3})\rightarrow L^{1}(\mathbb{R}^{3})} \leq C_{1}(1+|{\bf{n}}|) e^{-C_{0}t}.
\end{equation} For ${\bf{n}}=(0,0,0)$
\begin{equation}\label{eq:estNot0}
\|e^{-tL_{{\bf{0}}}}(1-P_{0})\|_{L^{1}(\mathbb{R}^{3})\rightarrow L^{1}(\mathbb{R}^{3})} \leq C_{1} e^{-C_{0}t}.
\end{equation}
\end{lemma}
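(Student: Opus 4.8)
The plan is to prove the decay first in a weighted Hilbert space, where self-adjointness is available, and then transfer it to $L^{1}(\mathbb{R}^{3})$.

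\emph{Step 1: decay in a Hilbert space.} Put $\mathcal{H}:=L^{2}(\mathbb{R}^{3},e^{|v|^{2}}\,d^{3}v)$. With $\beta m=2$ the detailed balance condition ~\eqref{eq:DetailBal} says precisely that $K_{0}$ is symmetric on $\mathcal{H}$, so $L_{0}=\nu_{0}-K_{0}$ is self-adjoint there; writing $L_{0}=\nu_{0}^{1/2}(1-T)\nu_{0}^{1/2}$ with $T:=\nu_{0}^{-1/2}K_{0}\nu_{0}^{-1/2}$ compact with a strictly positive kernel (by hypothesis~(a) on $r_{0}$) and $T(\nu_{0}^{1/2}M)=\nu_{0}^{1/2}M$, a Perron--Frobenius argument gives $\|T\|_{\mathcal{H}}=1$, attained simply, hence $\ker L_{0}=\mathrm{span}\{M\}$ and a spectral gap $\lambda_{0}>0$: $\langle L_{0}f,f\rangle_{\mathcal{H}}\ge\lambda_{0}\|f\|_{\mathcal{H}}^{2}$ for $f$ orthogonal to $M$ in $\mathcal{H}$. (Alternatively one may invoke the $L^{1}$-spectral-gap results of ~\cite{Ark1988,Mouhot2006} alluded to in Remark~(C) of Section~\ref{sec:MainTHM}.) The operator $L_{1}=\nu_{1}+K_{1}$ of ~\eqref{eq:difK1} is the linearization $-[Q(M,\cdot)+Q(\cdot,M)]$ of the collision term about $M$; since $Q(M,M)=0$ we have $L_{1}M=0$, and by the $H$-theorem $L_{1}$ is self-adjoint and non-negative on $\mathcal{H}$. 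Hence $L_{{\bf 0}}=L_{0}+C_{\infty}\kappa L_{1}$ still has $\ker=\mathrm{span}\{M\}$ and $\langle L_{{\bf 0}}f,f\rangle_{\mathcal{H}}\ge\lambda_{0}\|f\|_{\mathcal{H}}^{2}$ on $M^{\perp}$, giving $\|e^{-tL_{{\bf 0}}}(1-P_{0})\|_{\mathcal{H}\to\mathcal{H}}\le e^{-\lambda_{0}t}$; here one uses that $\int h\,d^{3}v=\langle h,M\rangle_{\mathcal{H}}$, so on $\mathcal{H}$ the Riesz projection $P_{0}$ of ~\eqref{eq:difProjection} is the orthogonal projection onto $\{M\}$, is bounded on $\mathcal{H}$, and (since $L_{{\bf 0}}M=0$ and $L_{{\bf 0}}$ conserves $\int\cdot\,d^{3}v$) commutes with $e^{-tL_{{\bf 0}}}$. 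For ${\bf n}\neq(0,0,0)$ we have $L_{{\bf n}}=(\nu+K)+i{\bf n}\cdot v$, and $i{\bf n}\cdot v$ is a skew-adjoint multiplication on $\mathcal{H}$, so $\mathrm{Re}\,\langle L_{{\bf n}}f,f\rangle_{\mathcal{H}}=\langle(\nu+K)f,f\rangle_{\mathcal{H}}\ge0$, with equality only for $f\in\mathrm{span}\{M\}$; since $L_{{\bf n}}M=i{\bf n}\cdot v\,M\neq0$, $\ker L_{{\bf n}}=\{0\}$, consistent with ~\eqref{eq:est000}. To turn this into a rate I would use hypocoercivity: microscopic coercivity holds because $\nu+K\ge\lambda_{0}$ on $M^{\perp}$, and macroscopic coercivity because for ${\bf n}\neq0$ the transport term takes $M$ out of the kernel ($\langle i{\bf n}\cdot v\,M,M\rangle_{\mathcal{H}}=0$ by oddness, so $(i{\bf n}\cdot v)M$ is a nonzero vector in $M^{\perp}$). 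The usual modified-norm construction then yields $\|e^{-tL_{{\bf n}}}\|_{\mathcal{H}\to\mathcal{H}}\le C(1+|{\bf n}|)e^{-\delta t}$ with $\delta>0$ independent of ${\bf n}$, the factor $1+|{\bf n}|$ being the degradation of the equivalence constants of the hypocoercive functional and matching ~\eqref{eq:est000}.

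\emph{Step 2: transfer to $L^{1}(\mathbb{R}^{3})$.} On the output side $\|f\|_{L^{1}(\mathbb{R}^{3})}\lesssim\|f\|_{\mathcal{H}}$ by Cauchy--Schwarz against the Gaussian. On the input side I would show that, for a fixed time $\tau>0$, the operator $e^{-\tau L_{{\bf n}}}$ (resp. $e^{-\tau L_{{\bf 0}}}(1-P_{0})$) maps $L^{1}(\mathbb{R}^{3})$ boundedly into $\mathcal{H}$: expanding by Duhamel around $B_{{\bf n}}:=\nu+i{\bf n}\cdot v$ a fixed finite number of times, one uses (i) that $|e^{-sB_{{\bf n}}}|=e^{-s\nu}$ decays like $e^{-\Lambda s}$ and, because $\nu\ge\Lambda\langle v\rangle$ by ~\eqref{eq:globalLower} and ~\eqref{eq:difNu0}, is arbitrarily smoothing in the velocity weight for $s>0$; (ii) that the gain operator $K_{0}$ maps $L^{1}(\langle v\rangle\,d^{3}v)$ boundedly into $\mathcal{H}$ and into every weighted space $L^{1}(\langle v\rangle^{m}d^{3}v)$ (the latter being ~\eqref{eq:estK0}); and (iii) that the remaining pieces $C_{\infty}\kappa K_{1}$ and $C_{\infty}\kappa\nu_{1}$ are small and recover one power of $\langle v\rangle$ from the $\nu^{-1}$ produced by integrating in time, exactly as in the integration by parts of ~\eqref{eq:integrationP}. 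Splitting $e^{-tL_{{\bf n}}}=e^{-(t-\tau)L_{{\bf n}}}\,e^{-\tau L_{{\bf n}}}$ for $t\ge\tau$ (and using that $P_{0}$ commutes with $e^{-tL_{{\bf 0}}}$ when ${\bf n}=0$), the second factor lands in $\mathcal{H}$ at bounded cost, the first decays in $\mathcal{H}$ by Step~1, and $\mathcal{H}\hookrightarrow L^{1}$ finishes; for $t\le\tau$ the bounds ~\eqref{eq:est000}--\eqref{eq:estNot0} are trivial.

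I expect Step~2 to be the main obstacle: the smoothing of the gain term requires a velocity weight on its argument, which has to be generated against the unboundedness of $\nu$, so the bookkeeping of weights against the (at worst logarithmic) short-time singularity of the time integral — and the count of how many Duhamel iterations are needed — must be carried out with care; and for ${\bf n}\neq0$ one must additionally keep track of the $|{\bf n}|$-dependence of the hypocoercivity constants.
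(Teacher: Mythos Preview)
Your approach is genuinely different from the paper's: the paper never uses hypocoercivity, but instead proves a uniform resolvent bound $\|(L_{{\bf n}}-\zeta)^{-1}\|_{L^{1}\to L^{1}}\le C$ on an ${\bf n}$-dependent region $\Omega_{{\bf n}}$ (Lemma~\ref{LM:distanceContour}), by writing $L_{{\bf n}}-\zeta=(1-K_{\zeta,{\bf n}})(\nu_{0}+i{\bf n}\cdot v-\zeta)+O(\kappa)$ and showing that $1-K_{\zeta,{\bf n}}$ is Fredholm-invertible on $L^{1}$ via compactness and an $L^{2}$ eigenvalue exclusion. The contour $\Gamma_{{\bf n}}$ has a vertical piece of length $\sim|{\bf n}|$, which produces the factor $(1+|{\bf n}|)$ in~\eqref{eq:est000}.

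Your Step~2, however, contains a concrete gap. The claim that $e^{-\tau L_{{\bf n}}}$ maps $L^{1}(\mathbb{R}^{3})$ into $\mathcal{H}=L^{2}(e^{|v|^{2}}d^{3}v)$ is false. Since $\nu(v)\sim|v|$ grows only linearly (by~\eqref{eq:globalLower}), the free factor $e^{-\tau\nu}$ decays like $e^{-c\tau|v|}$, which is ``arbitrarily smoothing'' for \emph{polynomial} weights but cannot absorb the Gaussian weight $e^{|v|^{2}/2}$ needed for $\mathcal{H}$. Any Duhamel expansion about $B_{{\bf n}}$ retains the leading term $e^{-\tau B_{{\bf n}}}f$; taking $f\in L^{1}$ supported near a single large velocity $v_{0}$ you see that $e^{-\tau B_{{\bf n}}}f$ still sits near $v_{0}$ and is not in $\mathcal{H}$, and the correction terms (which add a smooth piece) cannot cancel this. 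So the factorisation $e^{-tL_{{\bf n}}}=e^{-(t-\tau)L_{{\bf n}}}\cdot e^{-\tau L_{{\bf n}}}$ through $\mathcal{H}$ does not go through as written. A repair is possible: treat the bare term $e^{-tB_{{\bf n}}}$ directly in $L^{1}$ (it already decays by~\eqref{eq:exactForm}) and route only the integral term with a $K_{0}$ on the inside through $\mathcal{H}$, since $K_{0}$ genuinely has Gaussian output by the assumption on $r_{0}$; but then you must separately control the $C_{\infty}\kappa K_{1}$ contribution, whose kernel has no $e^{-|v|^{2}}$ factor, and the resulting argument is closer to the paper's decomposition~\eqref{eq:durha} than to the clean two-step scheme you outline.

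On Step~1: the uniform-in-${\bf n}$ rate $\delta$ together with the prefactor $(1+|{\bf n}|)$ is plausible, but it is not something one can simply read off the standard hypocoercive modified-norm construction; you would have to track how the auxiliary parameter scales with $|{\bf n}|$ when the transport is $i{\bf n}\cdot v$ and the collision part $\nu+K$ is unbounded in $v$. As it stands this is asserted rather than shown.
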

This lemma will be proven in Subsection ~\ref{subsec:RoughEst}.

The most important step is to estimate $$K_{t}^{({\bf{n}})}:= Ke^{-t(\nu+i{\bf{n}}\cdot v)} K.$$ Let $K(v,u)$ be the integral kernel of $K$. Then the integral kernel, $K_{t}^{({\bf{n}})}(v,u),$ of $K_{t}^{({\bf{n}})}$ is given by
$$K_{t}^{({\bf{n}})}(v,u)=\int_{\mathbb{R}^{3}}K(v,z)e^{-t[\nu(z)+i{\bf{n}}\cdot z]}K(z,u)\ dz.$$ The presence of the factor $e^{-it{\bf{n}}\cdot z}$ plays an important role. It makes the operator $K_{t}^{({\bf{n}})}$ smaller, as $|{\bf{n}}|$ becomes larger.
\begin{lemma}\label{LM:oscillate}
There exist positive constants $C_0$ and $C_{1}$ such that, for any ${\bf{n}}\in \mathbb{Z}^{3},$
\begin{equation}\label{eq:oscilate}
\|K_{t}^{({\bf{n}})} f\|_{ L^{1}(\mathbb{R}^{3})} \leq \frac{C_1}{1+|{\bf{n}}|t} e^{-C_0 t}\|\langle v\rangle^{3} f\|_{L^{1}(\mathbb{R}^{3})}.
\end{equation}
\end{lemma}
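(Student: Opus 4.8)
The strategy is to work at the level of integral kernels and to extract the gain $(1+|\mathbf{n}|t)^{-1}$ from the oscillatory factor $e^{-it\mathbf{n}\cdot z}$ by a single non-stationary phase integration by parts in $z$. Write
$$g_{v,u}(z):=K(v,z)\,e^{-t\nu(z)}\,K(z,u),$$
so that the kernel of $K_{t}^{(\mathbf{n})}$ is the Fourier transform of $g_{v,u}$ evaluated at frequency $t\mathbf{n}$: $K_{t}^{(\mathbf{n})}(v,u)=\int_{\mathbb{R}^{3}}g_{v,u}(z)\,e^{-it\mathbf{n}\cdot z}\,dz$. Two bounds are available: the trivial one $|K_{t}^{(\mathbf{n})}(v,u)|\leq\|g_{v,u}\|_{L^{1}(dz)}$, and, since the phase $z\mapsto\mathbf{n}\cdot z$ has the nowhere-vanishing gradient $\mathbf{n}$, the oscillatory one obtained by one integration by parts in $z$,
$$|K_{t}^{(\mathbf{n})}(v,u)|\leq\frac{1}{t|\mathbf{n}|}\,\|\nabla_{z}g_{v,u}\|_{L^{1}(dz)}.$$
Interpolating via $\min(a,b/x)\leq 2\max(a,b)/(1+x)$ yields
$$|K_{t}^{(\mathbf{n})}(v,u)|\leq\frac{C}{1+t|\mathbf{n}|}\bigl(\|g_{v,u}\|_{L^{1}(dz)}+\|\nabla_{z}g_{v,u}\|_{L^{1}(dz)}\bigr),$$
and the case $\mathbf{n}=0$ reduces to the trivial bound.

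Next I would justify the integration by parts and estimate the two $L^{1}(dz)$ norms. By condition (b) on $r_{0}$, the kernel $K_{0}(v,z)=r_{0}(v,z)$ is $C^{1}$ in $z$ with $|\nabla_{z}K_{0}(v,z)|\lesssim e^{-|v|^{2}/2}$, while the explicit Glassey kernel $K_{1}(v,z)$ of \eqref{eq:difK1} is smooth away from $z=v$ and there has only the locally integrable singularity $|z-v|^{-1}$; differentiating it once in $z$ produces at worst the still locally integrable singularity $|z-v|^{-2}$. Hence $\nabla_{z}g_{v,u}\in L^{1}(dz)$, the boundary terms over shrinking balls around $z=v$ and $z=u$ vanish, and the integration by parts is legitimate. (A second $z$-derivative of $K_{1}$ would produce the non-integrable $|z-v|^{-3}$, so only one integration by parts is available, consistent with the bare power $(1+t|\mathbf{n}|)^{-1}$ in the statement.) By the product rule,
$$\nabla_{z}g_{v,u}=(\nabla_{z}K(v,z))e^{-t\nu(z)}K(z,u)-t(\nabla\nu(z))K(v,z)e^{-t\nu(z)}K(z,u)+K(v,z)e^{-t\nu(z)}\nabla_{z}K(z,u),$$
where $\nabla\nu$ is bounded, being $\int\partial_{z}r_{0}(u,\cdot)\,du$ (controlled by condition (b)) plus a bounded contribution from $\nu_{1}$.

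I would then integrate these expressions in $v$ first, then in $z$. The decisive input is $\nu(z)\geq\Lambda(1+|z|)$ from \eqref{eq:globalLower}, whence $e^{-t\nu(z)}\leq e^{-\Lambda t}$; the extra factor $t$ in the middle term is harmless because $t\,e^{-\Lambda t}\lesssim e^{-C_{0}t}$ for any $C_{0}<\Lambda$. From \eqref{eq:estK0}, \eqref{eq:mK1} and the explicit kernels one checks $\int_{\mathbb{R}^{3}}|K(v,z)|\,dv\lesssim\langle z\rangle$ and $\int_{\mathbb{R}^{3}}|\nabla_{z}K(v,z)|\,dv\lesssim\langle z\rangle$, and, using that the anisotropic Gaussian $e^{-|(u-z)\cdot z|^{2}/|u-z|^{2}}$ in $K_{1}(z,u)$ concentrates the $z$-integral near a sphere of radius $\sim|u|$, one gets $\int_{\mathbb{R}^{3}}|K(z,u)|\,\langle z\rangle\,dz\lesssim\langle u\rangle^{3}$ (and the same with $\nabla_{z}K(z,u)$). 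Thus performing the $v$-integral costs at most one power $\langle z\rangle$, and the remaining $z$-integral is bounded by $C\,e^{-\Lambda t/2}\langle u\rangle^{3}$, with $e^{-\Lambda t/2}$ pulled out of $e^{-t\nu(z)}$ and the remainder providing $z$-convergence together with the decay of the kernels. Altogether
$$\int_{\mathbb{R}^{3}}\bigl(\|g_{v,u}\|_{L^{1}(dz)}+\|\nabla_{z}g_{v,u}\|_{L^{1}(dz)}\bigr)\,dv\lesssim e^{-C_{0}t}\,\langle u\rangle^{3},$$
and combining this with the kernel bound of the first paragraph and Fubini,
$$\|K_{t}^{(\mathbf{n})}f\|_{L^{1}(\mathbb{R}^{3})}\leq\int_{\mathbb{R}^{3}}\Bigl(\int_{\mathbb{R}^{3}}|K_{t}^{(\mathbf{n})}(v,u)|\,dv\Bigr)|f(u)|\,du\lesssim\frac{e^{-C_{0}t}}{1+t|\mathbf{n}|}\,\|\langle v\rangle^{3}f\|_{L^{1}(\mathbb{R}^{3})},$$
which is \eqref{eq:oscilate}.

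The main obstacle is this last batch of estimates: the bookkeeping with the explicit kernel $K_{1}$ of \eqref{eq:difK1}. One must check that $\nabla_{z}K_{1}$ is genuinely integrable in $z$ (so that exactly one integration by parts is available), and establish the polynomial bounds $\int|K(v,z)|\,dv\lesssim\langle z\rangle$ and $\int|K(z,u)|\langle z\rangle\,dz\lesssim\langle u\rangle^{3}$ with the correct weights, which requires tracking carefully the interplay of the $|v-u|^{-1}$ singularity with the anisotropic Gaussian $e^{-|(u-v)\cdot v|^{2}/|u-v|^{2}}$. These computations are the technical heart of the lemma and would be deferred to an appendix; once they are in place, the rest of the argument — the two kernel bounds and their interpolation — is soft.
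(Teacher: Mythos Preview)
Your approach is essentially the same as the paper's: one integration by parts in $z$ to convert the oscillation $e^{-it\mathbf{n}\cdot z}$ into the factor $(1+|\mathbf{n}|t)^{-1}$, followed by kernel estimates on $K$ and $\partial_{z}K$ (the paper's \eqref{eq:estKernel}) and interpolation with the trivial bound near $t=0$. The only cosmetic difference is that the paper groups $e^{-t\nu(z)}$ with the phase and writes $e^{-t[\nu+i\mathbf{n}\cdot z]}=\frac{1}{-t[\partial_{z_1}\nu+in_1]}\partial_{z_1}e^{-t[\nu+i\mathbf{n}\cdot z]}$, so the $z$-derivative never hits $e^{-t\nu}$ and no extra factor of $t$ appears, whereas you keep $e^{-t\nu}$ in the amplitude and correctly absorb the resulting $t\,e^{-\Lambda t}$ into $e^{-C_0 t}$; both routes lead to the same place, and the kernel computations you flag as the technical heart are exactly what the paper carries out (sketchily) for the worst term $\partial_{z_1}K_{1,1}$.
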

This lemma will be proven in Subsection ~\ref{subsec:oscillate}.

The results in Proposition ~\ref{prop:easyEst}, Lemma ~\ref{LM:roughEst} and Lemma ~\ref{LM:oscillate} suffice to prove Theorem ~\ref{THM:propagator}.\\
{\bf{Proof of Theorem ~\ref{THM:propagator}}.} In Equation ~\eqref{eq:durha} we have decomposed $e^{-tL}(1-P_0)$ into several terms. The operators $A_{k}, \ k=0,1,2,\cdots, 12,$ are estimated in Proposition ~\ref{prop:easyEst}.

In what follows, we study $\tilde{A}$. By ~\eqref{eq:decomA} we only need to control $\tilde{A}_{{\bf{n}}},\ {\bf{n}}\in \mathbb{Z}^3$. For ${\bf{n}}=(0,0,0)$ it is easy to see that
\begin{equation}\label{eq:TildeA0}
\|\tilde{A}_{0}g_{{\bf{0}}}\|_{L^{1}(\mathbb{R}^{3})}\lesssim e^{-C_0 t} \|\langle v\rangle^{12} g_{\bf{0}}\|_{L^{1}(\mathbb{R}^{3})}
\end{equation} by collecting the different estimates in ~\eqref{eq:exactForm} and Lemma ~\ref{LM:roughEst} and using the estimates on $K=-K_0+C_{\infty}\kappa K_1$ in Lemma ~\ref{LM:EstNonline}.

For ${\bf{n}}\not=0$, we observe that the integrands in the definitions of $\tilde{A}_{{\bf{n}}}$ are products of terms $e^{-(t-s_1)L_{{\bf{n}}}},$ $K e^{-(s_k-s_{k+1})(\nu+i{\bf{n}}\cdot v)}K$ and $e^{-(s_k-s_{k+1})(\nu+i{\bf{n}}\cdot v)}$, where $k\in \{1,2,\cdots,13\}$ (we use the convention that $s_{14}=0$).
Applying the bounds in ~\eqref{eq:exactForm}, Lemma ~\ref{LM:roughEst} and Lemma ~\ref{LM:oscillate}, we see that there is a constant $C_0>0$ such that
\begin{equation*}
\begin{array}{lll}
& &\|\tilde{A}_{{\bf{n}}}g_{\bf{n}}\|_{L^{1}(\mathbb{R}^{3})}\\
& &\\
&\lesssim&  e^{-C_{0}t} (1+|{\bf{n}}|)\|\langle v\rangle^{20} g_{\bf{n}}\|_{L^{1}(\mathbb{R}^{3})} \times \\
& &\\
& &\int_{0}^{t}\int_{0}^{s_1}\cdots \int_{0}^{s_{12}} [1+|{\bf{n}}|(s_{12}-s_{13})]^{-1} [1+|{\bf{n}}|(s_{8}-s_{11})]^{-1}\cdots [1+|{\bf{n}}|(s_{2}-s_{3})]^{-1}\ ds_{13} ds_{12}\cdots ds_{1}.
\end{array}
\end{equation*}
By direct computation we find that there exists a positive constant $\tilde{C}_0\leq C_0$ such that
\begin{equation*}
\|\tilde{A}_{{\bf{n}}}g_{\bf{n}}\|_{L^{1}(\mathbb{R}^{3})}\lesssim  e^{-\tilde{C}_0t}\frac{1}{(1+|{\bf{n}}|)^{4}} \|\langle v\rangle^{20} g_{\bf{n}}\|_{L^{1}(\mathbb{R}^{3})}.
\end{equation*} Plugging this and ~\eqref{eq:TildeA0} into ~\eqref{eq:decomA}, we find that
\begin{equation}\label{eq:Aprelimi}
\|(1-P_0)\tilde{A}g\|_{L^{1}(\mathbb{R}^{3}\times\mathbb{T}^{3})} \lesssim e^{-\tilde{C}_{0}t} \sum_{{\bf{n}}\in \mathbb{Z}^{3}} \frac{1}{(1+|{\bf{n}}|)^{4}} \|\langle v\rangle^{20}g_{{\bf{n}}}\|_{L^{1}(\mathbb{R}^{3})}.
\end{equation} The terms on the right hand side are bounded by
\begin{equation*}
\|\langle v\rangle^{20}g_{{\bf{n}}}\|_{L^{1}(\mathbb{R}^{3})}\leq (2\pi)^{3} \|\langle v\rangle^{20}g\|_{L^{1}(\mathbb{R}^{3}\times \mathbb{T}^{3})}.
\end{equation*} This, together with the fact that $\displaystyle\sum_{{\bf{n}}\in \mathbb{Z}^{3}} \frac{1}{(1+|{\bf{n}}|)^{4}}<\infty,$ implies that
\begin{equation}\label{eq:completeA}
\|(1-P_0)\tilde{A}g\|_{L^{1}(\mathbb{R}^{3}\times\mathbb{T}^{3})} \lesssim e^{-\tilde{C}_0 t} \|\langle v\rangle^{20}g\|_{L^{1}(\mathbb{R}^{3}\times\mathbb{T}^{3})}.
\end{equation}

Obviously Equation ~\eqref{eq:durha}, Inequality ~\eqref{eq:completeA} and Proposition ~\ref{prop:easyEst} imply Theorem ~\ref{THM:propagator}.
\begin{flushright}
$\square$
\end{flushright}
\subsection{Proof of Proposition ~\ref{prop:easyEst}}\label{subsec:ProofEEst}
Recall the meaning of the constant $\Lambda$ in ~\eqref{eq:globalLower}.
The definition of $A_0$ (see ~\eqref{eq:difA0}) implies that
\begin{equation}\label{eq:a0t}
\|A_{0}(t)f\|_{L^{1}(\mathbb{R}^{3}\times \mathbb{T}^{3})}\leq e^{-\Lambda t}\|f\|_{L^{1}(\mathbb{R}^{3}\times \mathbb{T}^{3})}.
\end{equation}

For $A_1,$ we use the estimate for the unbounded operator $K$ given in Lemma ~\ref{LM:EstNonline}. A direct computation then yields
$$
\begin{array}{lll}
\|A_{1}(f)\|_{L^{1}(\mathbb{R}^{3}\times \mathbb{T}^{3})}&\leq &\int_{0}^{t} e^{-\Lambda(t-s)}\|K e^{-s(\nu+v\cdot\nabla_{x})x}f\|_{L^{1}(\mathbb{R}^{3}\times \mathbb{T}^{3})}\ ds\\
& &\\
&\lesssim& \int_{0}^{t} e^{-\Lambda(t-s)} e^{-\Lambda s}\ ds \|\langle v\rangle f\|_{L^{1}}\\
& &\\
&= &e^{-\Lambda t}t \|\langle v\rangle f\|_{L^{1}}.
\end{array}
$$

Similar arguments yield the desired estimates for $A_{k},\ k=2,3,\cdots 12$.

Thus, the proof of Proposition ~\ref{prop:easyEst} is complete.

\begin{flushright}
$\square$
\end{flushright}
%%%%%%%%%%%%%%%%%%%%%%%%%%%%%%%%%%%%%%%%
\subsection{Proof of Lemma ~\ref{LM:roughEst}}\label{subsec:RoughEst}
\begin{proof}
If ${\bf{n}}=(0,0,0)$ then the proof of ~\eqref{eq:est000} is similar to that of a similar estimate in ~\cite{Ark1988,Wenn1993,Mouhot2006} and to the proof of ~\eqref{eq:estNot0} given below. It is therefore omitted. What makes the present situation different to the one considered in ~\cite{Ark1988,Wenn1993,Mouhot2006} is that the spectrum of the linear operator $L_{{\bf{n}}}$ depends on ${\bf{n}}$ in a non-trivial manner. The union over ${\bf{n}}$ of the spectra of the operators $L_{\bf{n}}$ fills almost the entire right half of the complex plane.
\begin{figure}[htb!]
\centering%
\includegraphics[width=8cm]{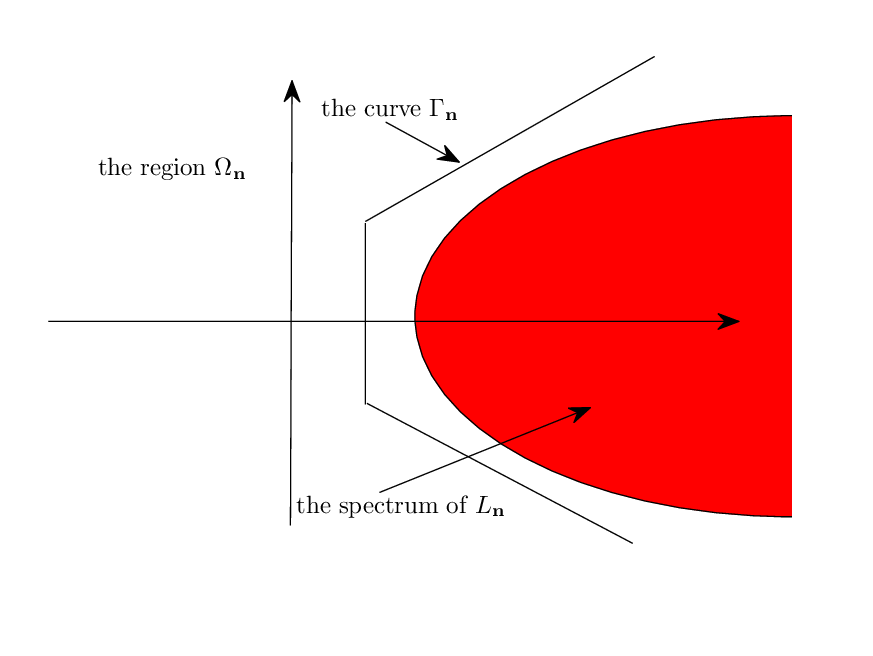}
\caption{The spectrum of $L_{{\bf{n}}}$, the curve $\Gamma_{{\bf{n}}},$ and the region $\Omega_{{\bf{n}}}$}
\label{fig:Ln}
\end{figure}

For any ${\bf{n}}\in \mathbb{Z}^{3},$ we define a curve $\Gamma_{{\bf{n}}}$ (see Figure ~\ref{fig:Ln}),
\begin{equation}\label{eq:difCurve}
\Gamma_{{\bf{n}}}:=\Gamma_{1}({\bf{n}})\cup \Gamma_{2}({\bf{n}})\cup \Gamma_{3}({\bf{n}})
\end{equation} with
$$\Gamma_{1}({\bf{n}}):=\{\Theta+i\beta |\ \beta \in [-\Psi(|{\bf{n}}|+1),\ \Psi(|{\bf{n}}|+1)]\};$$
$$\Gamma_{2}({\bf{n}}):=\{\Theta+i(|{\bf{n}}|+1)\Psi+\beta+i \Psi \beta(|{\bf{n}}|+1) ,\ \beta\geq 0\};$$
$$\Gamma_{3}({\bf{n}}):=\{\Theta-i(|{\bf{n}}|+1)\Psi+\beta-i \Psi \beta(|{\bf{n}}|+1) ,\ \beta\geq 0\}.$$ Here $\Theta$ and $\Psi$ are positive constants to be chosen later; they are independent of the constant $\kappa$ in ~\eqref{eq:NLBL1}.

Moreover, we define $\Omega_{{\bf{n}}}$ to be the complement of the region encircled by the curve $\Gamma_{{\bf{n}}};$ see Figure ~\ref{fig:Ln}.

The following lemma provides an important estimate.
\begin{lemma}\label{LM:distanceContour}
Suppose that the positive constants $\Theta$ and $\frac{1}{\Psi}$ are chosen sufficiently small.
Then there exists a constant $C$ independent of ${\bf{n}}$ such that, for any point $\zeta\in \Omega_{{\bf{n}}}$ and ${\bf{n}}\in \mathbb{Z}^{3}\backslash\{(0,0,0)\},$ we have $$\|(L_{{\bf{n}}}-\zeta)^{-1}\|_{L^{1}\rightarrow L^{1}}\leq C.$$
\end{lemma}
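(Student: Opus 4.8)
The strategy is to estimate the resolvent $(L_{\bf n}-\zeta)^{-1}$ uniformly for $\zeta\in\Omega_{\bf n}$ by treating the compact part $K$ as a perturbation of the multiplication-plus-transport operator $\nu+i{\bf n}\cdot v$. First I would write $L_{\bf n}-\zeta = (\nu+i{\bf n}\cdot v-\zeta)(1+(\nu+i{\bf n}\cdot v-\zeta)^{-1}K)$, so that it suffices to (i) control $(\nu+i{\bf n}\cdot v-\zeta)^{-1}$ on $L^1$, and (ii) show that $(\nu+i{\bf n}\cdot v-\zeta)^{-1}K$ has operator norm bounded away from $1$ — actually small — on all of $\Omega_{\bf n}$, uniformly in ${\bf n}\neq 0$. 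Step (i) is elementary: for $\zeta=a+ib$, the operator $(\nu(v)+i{\bf n}\cdot v-\zeta)^{-1}$ is multiplication by $(\nu(v)-a+i({\bf n}\cdot v-b))^{-1}$, whose $L^1\to L^1$ norm is $\sup_v|\nu(v)-a+i({\bf n}\cdot v-b)|^{-1}$. By the geometry of $\Gamma_{\bf n}$ (the curve is designed so that $\mathrm{Re}\,\zeta=\Theta$ on the vertical segment $\Gamma_1$ and $\mathrm{Re}\,\zeta$ grows linearly along $\Gamma_2,\Gamma_3$ while $\mathrm{Im}\,\zeta$ stays within a cone of aperture $\sim\Psi(|{\bf n}|+1)$), one checks that for $\zeta\in\Omega_{\bf n}$ and $v$ in the region where $\nu(v)\lesssim|{\bf n}|$ the imaginary part $|{\bf n}\cdot v - b|$ is bounded below, while for $\nu(v)$ large the real part $\nu(v)-a$ dominates; using $\nu(v)\geq\Lambda(1+|v|)$ from~\eqref{eq:globalLower} and $\Theta$ small, this gives $\|(\nu+i{\bf n}\cdot v-\zeta)^{-1}\|_{L^1\to L^1}\leq C/(1+|{\bf n}|)$ on $\Gamma_1$ and a comparable (indeed better, by the growing real part) bound on $\Gamma_2\cup\Gamma_3$, hence on all of $\Omega_{\bf n}$.

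For step (ii), I would combine the resolvent bound just obtained with the gain in powers of $\langle v\rangle$ available from $K$. The operator $K=-K_0+C_\infty\kappa K_1$ satisfies, by Lemma~\ref{LM:EstNonline}, $\|\langle v\rangle K_0 f\|_{L^1}\lesssim\|\langle v\rangle f\|_{L^1}$ and $\|\langle v\rangle K_1 f\|_{L^1}\lesssim\|\langle v\rangle^2 f\|_{L^1}$, so $K$ maps $L^1$ into a space with a bit of velocity decay, but it does not gain in $|{\bf n}|$; the point is that it also does not lose — and the $1/(1+|{\bf n}|)$ from step (i) is enough. More precisely, one writes $(\nu+i{\bf n}\cdot v-\zeta)^{-1}K f$ and splits the $v$-integral according to whether $\nu(v)\lesssim 2|b|$ or not; in the first regime the oscillation $|{\bf n}\cdot v - b|$ provides a lower bound of order $|{\bf n}|$ on the denominator after integrating out the component of $v$ along ${\bf n}$ (this is where the explicit kernel estimates for $K_0,K_1$, bounded by $e^{-c|v|^2}$ times a polynomial in $|u-v|$, are used to absorb the transverse integration), and in the second regime the real part $\nu(v)-a$ dominates and one simply uses the decay of the kernel. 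Either way one obtains $\|(\nu+i{\bf n}\cdot v-\zeta)^{-1}K\|_{L^1\to L^1}\leq C\kappa + C/(1+|{\bf n}|)\cdot(\text{bounded})$; choosing $\kappa$ small (as assumed throughout) and, if necessary, $\Theta$ small and $\Psi$ large, this is $\leq 1/2$ uniformly in $\zeta\in\Omega_{\bf n}$ and ${\bf n}\neq 0$. The Neumann series then gives $\|(L_{\bf n}-\zeta)^{-1}\|_{L^1\to L^1}\leq 2\,\|(\nu+i{\bf n}\cdot v-\zeta)^{-1}\|_{L^1\to L^1}\leq C$, which is the claim. (The slightly weaker conclusion — a bound $C$ rather than $C/(1+|{\bf n}|)$ — is all that is asserted and all that is needed; in fact one even gets the stronger decaying bound away from $\Gamma_1$.)

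The main obstacle is step (ii) in the regime $\nu(v)\lesssim|b|$, i.e.\ showing that the oscillatory factor $e^{i{\bf n}\cdot v}$ implicit in the denominator $(\nu(v)-a+i({\bf n}\cdot v-b))^{-1}$ genuinely converts into a quantitative $L^1$ gain after composing with $K$. On $L^1$ one cannot invoke Hilbert-space spectral theory or Plancherel, so the gain must come from a direct pointwise analysis: one fixes $u$, changes variables so that one coordinate of $v$ is $t={\bf n}\cdot v/|{\bf n}|$, and bounds $\int |\nu(v)-a+i(|{\bf n}|t-b)|^{-1}|K(v,u)|\,dv$ by pulling the kernel's Gaussian decay out of the $t$-integral and estimating $\int dt\,(\,(\nu-a)^2+(|{\bf n}|t-b)^2\,)^{-1/2}$ against something like $|{\bf n}|^{-1}\log$, which is where one spends one or two powers of $\langle v\rangle$ and where the lower bound $\nu(v)\geq\Lambda(1+|v|)$ is essential to keep the logarithm under control. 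This is essentially the same mechanism that reappears, in sharper form, in Lemma~\ref{LM:oscillate}; here we need only its crude consequence. Once this estimate is in hand the rest is bookkeeping: assemble the Neumann series, verify that the constants can be chosen independently of ${\bf n}$, and note that the excluded case ${\bf n}=0$ (where $\zeta=0$ would be in the spectrum) is precisely why $\Omega_0$ must avoid a neighborhood of the origin, handled separately via the $1-P_0$ projection in~\eqref{eq:estNot0}.
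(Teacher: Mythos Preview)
Your perturbative Neumann-series approach has a genuine gap: the operator you want to be small is \emph{not} small (and in the order you write it, is not even bounded on $L^{1}$).

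First, the factorization $L_{\bf n}-\zeta=(\nu+i{\bf n}\cdot v-\zeta)\bigl(1+(\nu+i{\bf n}\cdot v-\zeta)^{-1}K\bigr)$ places the free resolvent to the \emph{left} of $K$. But $K_{0}$ loses a $\langle v\rangle$-weight on the input (this is~\eqref{eq:estK0}: $\|K_{0}f\|_{L^{1}}\lesssim\|\langle v\rangle f\|_{L^{1}}$), and the free resolvent, being a function of $v$ only, cannot restore it. Concretely, the kernel of $(\nu+i{\bf n}\cdot v-\zeta)^{-1}K_{0}$ is $r_{0}(v,u)/(\nu(v)+i{\bf n}\cdot v-\zeta)$, and $\sup_{u}\int_{v}|\cdots|\,dv$ still grows like $\langle u\rangle$, so this operator is unbounded $L^{1}\to L^{1}$. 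The paper factors the other way, with $K_{\zeta,{\bf n}}:=K_{0}(\nu_{0}+i{\bf n}\cdot v-\zeta)^{-1}$; then the $\langle u\rangle^{-1}$ from~\eqref{eq:ReasonCurve} cancels the growth and $K_{\zeta,{\bf n}}$ is bounded (indeed compact) on $L^{1}$.

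Second, and more seriously, even with the correct ordering the Neumann series fails for small $|{\bf n}|$. Your step~(i) claim that $\|(\nu+i{\bf n}\cdot v-\zeta)^{-1}\|_{L^{1}\to L^{1}}\leq C/(1+|{\bf n}|)$ is false: this is multiplication by a function whose sup-norm is attained near $v=0$, where for $\zeta=\Theta\in\Omega_{\bf n}$ it equals $(\nu(0)-\Theta)^{-1}$, an $O(1)$ constant independent of ${\bf n}$. Moreover, for $K_{\zeta,{\bf n}}$ one computes $\sup_{u}\int_{v}r_{0}(v,u)\,dv\,/\,|\nu_{0}(u)+i{\bf n}\cdot u-\zeta|=\sup_{u}\nu_{0}(u)/|\nu_{0}(u)+i{\bf n}\cdot u-\zeta|$; for $\zeta=\Theta$ and $u\perp{\bf n}$ this is $\nu_{0}(u)/(\nu_{0}(u)-\Theta)>1$. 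So $\|K_{\zeta,{\bf n}}\|_{L^{1}\to L^{1}}>1$ on part of $\Omega_{\bf n}$, and no choice of $\Theta$ (small) or $\Psi$ (large) fixes this; $K_{0}$ carries no factor of $\kappa$. The oscillatory gain you invoke is real for large $|{\bf n}|$, but the lemma must hold for every ${\bf n}\neq 0$, including $|{\bf n}|=1$.

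This is exactly why the paper does \emph{not} use a Neumann series for $1-K_{\zeta,{\bf n}}$. Instead it proves invertibility by a Fredholm-type argument: $K_{\zeta,{\bf n}}$ is compact on $L^{1}$ (Lemma~\ref{LM:compactness}), so failure of the uniform lower bound~\eqref{eq:BdBelow} would, after extracting a convergent subsequence, produce a nontrivial $g_{\infty}\in L^{1}$ with $(1-K_{ih_{\infty},{\bf n}_{\infty}})g_{\infty}=0$. Conjugating by $e^{|v|^{2}/2}$ turns this into an $L^{2}$-eigenvalue equation for the self-adjoint operator $-\nu_{0}+\tilde K_{0}$, whose only null vector is $e^{-|v|^{2}/2}$ (Lemma~\ref{LM:lowestEi}); this is incompatible with ${\bf n}_{\infty}\neq 0$. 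Surjectivity is handled by a separate homotopy argument (Lemma~\ref{LM:surjective}). Only the small $C_{\infty}\kappa(\nu_{1}+K_{1})$ piece is treated perturbatively. You would need to replace your step~(ii) with an argument of this non-perturbative kind.
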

This lemma is proven in Appendix ~\ref{sec:distanceContour}.

This lemma and the spectral theorem in ~\cite{RSI} yield the formula
\begin{equation}\label{eq:IntContour}
e^{-tL_{{\bf{n}}}}=\frac{1}{2\pi i}\oint_{\Gamma_{{\bf{n}}}} e^{-t\zeta}[\zeta-L_{{\bf{n}}}]^{-1} \ d\zeta
\end{equation} on $L^{1}(\mathbb{R}^3).$
Applying Lemma ~\ref{LM:distanceContour} to ~\eqref{eq:IntContour} we obtain that
$$
\|e^{-tL_{{\bf{n}}}}\|_{L^1\rightarrow L^1}\lesssim  \int_{\zeta\in \Gamma_1({\bf{n}})\cup \Gamma_{2}({\bf{n}})\cup \Gamma_{3}({\bf{n}})} e^{-t Re\ \zeta} \ |d\zeta|
$$ By the definition of $\Gamma_{1}({\bf{n}})$, it is easy to see that $$\int_{\zeta\in \Gamma_{1}} e^{-\Theta t} |d\zeta|\lesssim e^{-\Theta t} |{\bf{n}}|.$$
Similarly, the definitions of $\Gamma_{2}({\bf{n}})$ and $\Gamma_{3}({\bf{n}})$ imply that for any $t\geq 1,$  $$\int_{\zeta\in \Gamma_{2}({\bf{n}})\cup \Gamma_{3}({\bf{n}})} e^{-t Re \zeta} \ |d\zeta| \lesssim (1+|{\bf{n}}|)\int_{\Theta}^{\infty} e^{-t\sigma} d\sigma\lesssim e^{-\Theta t} (1+|{\bf{n}}|).$$

Collecting the estimates above, we arrive at ~\eqref{eq:est000}, provided that $t\geq 1.$

The proof will be complete if we can show that the propagator $e^{-tL_{{\bf{n}}}}$ is bounded on $L^{1}(\mathbb{R}^3)$ when $t\in[0,1].$ To prove this, we establish the local wellposedness of the equation
$$
\begin{array}{lll}
\partial_{t}g&=&[-\nu-i{\bf{n}}\cdot v+K]g,\\
g(v,0)&=&g_0(v).
\end{array}
$$ This is easier to prove than local wellposedness of the nonlinear equation in Proposition ~\ref{Prop:wellposed}, and we permit ourselves to omit the details.

This completes the proof of Lemma ~\ref{LM:roughEst}.
\end{proof}
%%%%%%%%%%%%%%%%%%%%%%%%%%%%%%%%%%%%%%%%%%%%%%%%%%%%%%%%%%%%%%%%%%%%%%%%%%%55
%%%%%%%%%%%%%%%%%%%%%%%%%%%%%%%%%%%%%%%%%%%%%%%%%%%5
\subsection{Proof of Inequality (~\ref{eq:oscilate})}\label{subsec:oscillate}
\begin{proof}
We denote the integral kernel of the operator $K$ by $K(v,u)$ and infer its explicit form from ~\eqref{eq:difK}, ~\eqref{eq:difK0} and ~\eqref{eq:difK1}.
It is then easy to see that the integral kernel of the operator $Ke^{-t(\nu+i{\bf{n}}\cdot v)}K$ is given by
$$K_{t}^{({\bf{n}})}(v,u):=\int_{\mathbb{R}^{3}}K(v,z) e^{-t [\nu(z)+i{\bf{n}}\cdot z]} K(z,u)\ d^3 z.$$

We use the oscillatory nature of $e^{-it{\bf{n}}\cdot z}$ to derive some ``smallness estimates" when $|{\bf{n}}|$ is sufficiently large. Mathematically, we achieve this by integrating by parts in the variable $z$.
Without loss of generality we assume that $$|n_{1}|\geq \frac{1}{3}|{\bf{n}}|.$$ We then integrate by parts in the variable $z_1$ to obtain
\begin{equation}
\begin{array}{lll}
K_{t}^{({\bf{n}})}(v,u)&=&\int_{\mathbb{R}^{3}} K(v,z)K(z,u) \frac{1}{-t[\partial_{z_1} \nu(z)+in_1]} \partial_{z_1} e^{-t [\nu(z)+i{\bf{n}}\cdot z]} \ d^3 z\\
& &\\
&=& \int_{\mathbb{R}^{3}} \partial_{z_1} [K(v,z)K(z,u) \frac{1}{t[\partial_{z_1} \nu(z)+in_1]} ] e^{-t [\nu(z)+i{\bf{n}}\cdot z]} \ d^3 z
\end{array}
\end{equation}

The different terms in $\partial_{z_1} [K(v,z)K(z,u) \frac{1}{t[\partial_{z_1} \nu(z)+in_1]} ]$ are dealt with as follows.
\begin{itemize}
\item[(1)]
We claim that, for $l=0,1,$
\begin{equation}\label{eq:estKernel}
\int_{\mathbb{R}^{3}}\langle v\rangle^{m}|\partial_{z_{1}}^{l}K(v,z)|\ d^3 v\lesssim \langle z\rangle^{m+2},\ \ \int_{\mathbb{R}^{3}}\langle z\rangle^{m}|\partial_{z_{1}}^{l}K(z,u)|\ d^3 z\lesssim \langle u\rangle^{m+2}.
\end{equation}
\item[(2)] By direct computation,
\begin{equation}
|\partial_{z}^{l}\frac{1}{t[\partial_{z_1} \nu(z)+in_1]}|\lesssim \frac{1}{|{\bf{n}}|t}\ \text{for}\ l=0,1.
\end{equation}
\end{itemize}
These bounds and the fact that $e^{-t\nu}\lesssim e^{-\Lambda t}$ (see ~\eqref{eq:globalLower}) imply that
$$\int_{\mathbb{R}^{3}\times \mathbb{R}^{3}} \langle v\rangle^{m}|K_{t}^{({\bf{n}})}(v,u)g(u)|\ d^3 u\lesssim \frac{e^{-\Lambda t}}{|{\bf{n}}|t}\|\langle v\rangle^{m+3}g\|_{L^{1}}.$$ To remove the non-integrable singularity in the upper bound at $t=0$, we use a straightforward estimate derived from the definition of $K_{t}^{({\bf{n}})}$ to obtain
$$\int_{\mathbb{R}^{3}\times \mathbb{R}^{3}} \langle v\rangle^{m}|K_{t}^{({\bf{n}})}(v,u)g(u)|\ d^3 u\lesssim e^{-\Lambda t}\|\langle v\rangle^{m+3}g\|_{L^{1}}.
$$ Combination of these two estimates yields ~\eqref{eq:oscilate}.

We are left with proving ~\eqref{eq:estKernel}. In the next we focus on proving ~\eqref{eq:estKernel} when $l=1$, the case $l=0$ is easier, hence omitted.
By direct computation we find that $$
\begin{array}{lll}
|\partial_{z_{1}}K(v,z)|&\lesssim & \kappa |\partial_{z_{1}}|z-v|^{-1}e^{-\frac{|(z-v)\cdot v|^2}{|z-v|^2}}|+\kappa|\partial_{z_{1}}|z-v|e^{-|v|^2}| +|\partial_{z_{1}} r_{0}(z,v) |
\end{array}
$$
and, similarly, that $$|\partial_{z_{1}}K(z,u)|\lesssim \kappa |\partial_{z_{1}}|z-u|^{-1}e^{-\frac{|(z-u)\cdot z|^2}{|z-u|^2}}|+\kappa |\partial_{z_{1}}|z-u|e^{-|z|^2}|+|\partial_{z_{1}} r_{0}(u,z) |.$$
Among the various terms we only study the most difficult one, namely $\partial_{z_{1}}K_{1,1}(v,z)$, where $K_{1,1}(v,z)$ is defined by $$K_{1,1}(v,z):=|z-v|^{-1}e^{-\frac{|(z-v)\cdot v|^2}{|z-v|^2}}.$$
By direct computation $$|\partial_{z_{1}}K_{1,1}(v,z)|\lesssim \frac{1+|v_1|}{|v-z|^{2}}  e^{-\frac{1}{2}\frac{|(z-v)\cdot v|^2}{|z-v|^2}}.$$
To complete our estimate we divide the set $(v,z)\in \mathbb{R}^{3}\times \mathbb{R}^{3}$ into two subsets defined by $|v|\leq 10|z|$ and $|v|>10|z|,$ respectively. In the first subset we have that
$$|\partial_{z_{1}}K_{1,1}(v,z)|\lesssim \frac{1}{|v-z|^2}(|v|+1)\leq \frac{10(|z|+1)}{|v-z|^2},$$ and hence
\begin{equation}
\int_{|v|\leq 8|z|}\langle v\rangle^{m}|\partial_{z_{1}}K_{1,1}(v,z)|\ d^3 v\leq 10(1+|z|)^{m+1} \int_{|v|\leq 10|z|}\frac{1}{|v-z|^2}\ d^3 v\lesssim (1+|z|)^{m+2}.
\end{equation}
In the second subset we have that $z-v\approx -v$, which implies that $\frac{|(z-v)\cdot v|}{|z-v|}\geq \frac{1}{2}|v|$. Thus, $$|\partial_{z_{1}}K_{1,1}(v,z)|\leq \frac{1+|v|}{|v|^2}e^{-\frac{1}{8}|v|^2}.$$ This obviously implies that
\begin{equation}
\int_{|v|\geq 10|z|}\langle v\rangle^{m}|\partial_{z_{1}}K_{1,1}(v,z)|\ d^3 v\lesssim \int_{|v|\geq 10|z|}\langle v\rangle^{m}\frac{1+|v|}{|v|^2}\ e^{-\frac{1}{8}|v|^2} d^3 v\lesssim 1.
\end{equation}

By such estimates the proof of ~\eqref{eq:estKernel} can be easily completed.
\end{proof}

%%%%%%%%%%%%%%%%%%%%%%%%%%%%%%%%%%%%%%%%%%%%%%55
\section{Proof of the Main Theorem}\label{sec:ProofMainTHM}
To simplify notations, we let $L^{1}$ stand for $L^{1}(\mathbb{R}^{3}\times \mathbb{T}^{3})$.

Given a solution, $f(\cdot,s)$, $0\leq s\leq t$, of the Boltzmann equation ~\eqref{eq:ALinear}, we introduce two ``control functions", $\mathcal{M}$ and $\mathcal{I}$:
\begin{equation}\label{eq:difCont}
 \begin{array}{lll}
  \mathcal{M}(t)&:=&\displaystyle\max_{0\leq s\leq t} e^{C_{0}s} \sum_{|\alpha|\leq 8} \|\partial_{x}^{\alpha}f(s)\|_{L^{1}},\\
& &\\
\mathcal{I}(t)&:=&\displaystyle\sum_{|\alpha|\leq 8}\int_{0}^{t} \|\langle v\rangle^{2m+2}\partial_{x}^{\alpha}f(s)\|_{L^{1}}\ ds,
 \end{array}
\end{equation} where, the constants $m$ and $C_0$ are as in Theorem ~\ref{THM:propagator}.

These two functions can be estimated as follows.
\begin{lemma}\label{LM:Majorants}
\begin{equation}\label{eq:controlM}
 \mathcal{M}(t)\leq C[\sum_{|\alpha|\leq 8}\|\langle v\rangle^{m}\partial_{x}^{\alpha}f_0\|_{L^{1}}+ \kappa \mathcal{M}^{\frac{3}{2}}\mathcal{I}^{\frac{1}{2}}];
\end{equation} and
\begin{equation}\label{eq:controlI}
\mathcal{I}(t) \leq C[ \sum_{|\alpha|\leq 8} \|\langle v\rangle^{2m+1} \partial_{x}^{\alpha}f_{0}\|_{L^{1}}+\kappa \mathcal{I}(t)\mathcal{M}(t)+\mathcal{M}(t)].
\end{equation} for a finite constant $C$, where $f_0$ is the initial condition.
\end{lemma}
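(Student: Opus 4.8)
The plan is to rewrite the solution of \eqref{eq:ALinear} via Duhamel's principle relative to the full linear operator $L$, isolating the zero mode with the Riesz projection $P_0$. Since $P_0$ annihilates the nonlinearity—indeed $\int_{\Gamma^{(1)}} Q(f,f)\,d^3vd^3x=0$ by the collision invariants, and $P_0 f_0 = 0$ because the total mass was subtracted off in the definition \eqref{eq:difF} of $f$ (so $\int f_0 = 0$)—we have $P_0 f(t)=0$ for all $t$, hence
\begin{equation}\label{eq:DuhamelL}
f(t) = e^{-tL}(1-P_0)f_0 + \kappa\int_0^t e^{-(t-s)L}(1-P_0)Q(f(s),f(s))\,ds.
\end{equation}
To control the full $x$-derivatives appearing in $\mathcal{M}$ and $\mathcal{I}$, one applies $\partial_x^\alpha$, which commutes with $L$ and with $Q$ in the distributional sense used in \eqref{eq:AppliEmbed}, so the same representation holds with $f$ replaced by $\partial_x^\alpha f$ and $Q(f,f)$ by $\sum_{\beta_1+\beta_2=\alpha}Q(\partial_x^{\beta_1}f,\partial_x^{\beta_2}f)$.

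For the estimate \eqref{eq:controlM} on $\mathcal{M}$, I would take the plain $L^1$ norm of \eqref{eq:DuhamelL} (no $v$-weight). The linear term is bounded using Theorem \ref{THM:propagator}: $\|e^{-tL}(1-P_0)\partial_x^\alpha f_0\|_{L^1}\lesssim e^{-C_0 t}\|\langle v\rangle^m \partial_x^\alpha f_0\|_{L^1}$. For the nonlinear term, inside the time integral I again invoke Theorem \ref{THM:propagator} to gain the factor $e^{-C_0(t-s)}$ at the price of a weight $\langle v\rangle^m$ on $Q$, then apply \eqref{eq:AppliEmbed} with the roles arranged so that the weight $\langle v\rangle^{m+1}$ lands on one factor and no weight on the other. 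The resulting bilinear bound is estimated using interpolation: I split $\|\langle v\rangle^{m+1}\partial_x^{\beta}f(s)\|_{L^1}$ by Hölder in velocity between the unweighted norm (controlled by $e^{-C_0 s}\mathcal{M}$) and the high-weight norm $\|\langle v\rangle^{2m+2}\partial_x^\beta f(s)\|_{L^1}$ (whose time integral is $\mathcal{I}$), getting something like $\|\langle v\rangle^{m+1}u\|_{L^1}\le \|u\|_{L^1}^{1/2}\|\langle v\rangle^{2m+2}u\|_{L^1}^{1/2}$. After multiplying by $e^{C_0 t}$ and taking the sup over $t$, the $e^{-C_0(t-s)}e^{C_0 t}=e^{C_0 s}$ exponentials combine with the two $e^{-C_0 s/2}$ factors from the two $\mathcal{M}^{1/2}$ contributions to leave an integrable $e^{-C_0 s}\cdot e^{C_0 s}$—more carefully, one $f$-factor supplies $(e^{-C_0 s}\mathcal{M})^{1/2}$ twice while the other supplies the $\mathcal{I}^{1/2}$ pieces, and a Cauchy–Schwarz in $s$ over $[0,t]$ closes it—yielding the claimed $\kappa\mathcal{M}^{3/2}\mathcal{I}^{1/2}$.

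For \eqref{eq:controlI} I would not route through the smooth contour estimate for the weighted norm (the weight $\langle v\rangle^{2m+2}$ is too heavy for $e^{-tL}(1-P_0)$ to absorb directly); instead I use the Duhamel formula relative to the diagonal part $\nu+v\cdot\nabla_x$, exactly as in \eqref{eq:integrationP}: $f(t)=e^{-t(\nu+v\cdot\nabla_x)}f_0+\int_0^t e^{-(t-s)(\nu+v\cdot\nabla_x)}\bigl(Kf(s)+\kappa Q(f,f)(s)\bigr)ds$, apply $\langle v\rangle^{2m+2}\partial_x^\alpha$, take $L^1$, integrate in $t\in[0,t]$, and exploit the gain of one power of $\nu^{-1}\gtrsim\langle v\rangle^{-1}$ from the time integration by parts (the identity used in the fourth line of \eqref{eq:integrationP}, which crucially needs $\nu$ unbounded). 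The $f_0$ term gives $\sum\|\langle v\rangle^{2m+1}\partial_x^\alpha f_0\|_{L^1}$. The $Kf$ term: by \eqref{eq:estK0} the $K_0$ part drops to a $\langle v\rangle$-weight, which is dominated by $\|\partial_x^\alpha f(s)\|_{L^1}\le e^{-C_0 s}\mathcal{M}$, integrable in $s$ hence bounded by $C\mathcal{M}$; by \eqref{eq:mK1} the $K_1$ part costs one more velocity power, giving $\lesssim\int_0^t\|\langle v\rangle^{2m+2}\partial_x^\alpha f(s)\|_{L^1}ds=\mathcal{I}$, but with the prefactor $\kappa C_\infty$—to make this a genuine gain I use the extra power of $\nu^{-1}$ from \eqref{eq:integrationP} so the net weight is $\langle v\rangle^{2m+1}$, i.e. absorbed into $\mathcal{M}$ as well, leaving only the $\mathcal{M}$ term. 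The nonlinear term, via \eqref{eq:AppliEmbed} and the interpolation split as before, produces $\kappa\,\mathcal{I}\mathcal{M}$. Collecting gives \eqref{eq:controlI}.

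The main obstacle I anticipate is the bookkeeping of velocity weights against the heavy loss of regularity/decay inherent in all three of the unbounded operators $\nu$, $K$, $Q$: one must arrange that every appearance of $Q$ or $K_1$ that would naively demand a weight higher than $2m+2$ is tamed either by the one-power gain from the $\nu^{-1}$ integration-by-parts trick or by the interpolation inequality $\|\langle v\rangle^{m+1}u\|_{L^1}\le\|u\|_{L^1}^{1/2}\|\langle v\rangle^{2m+2}u\|_{L^1}^{1/2}$, and that the exponential weights $e^{C_0 s}$ built into $\mathcal{M}$ combine correctly with the $e^{-C_0(t-s)}$ from the propagator so that the final $s$-integral converges with a constant independent of $t$. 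Choosing $2m+2$ as the weight in $\mathcal{I}$ (rather than just $m+1$) is precisely what leaves room for this interpolation, and checking that $8$ $x$-derivatives suffice everywhere (so that \eqref{eq:Poincare} and \eqref{eq:AppliEmbed} apply with $|\beta_i|\le 8$) is the remaining routine but essential point.
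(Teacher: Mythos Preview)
Your approach is essentially identical to the paper's: Duhamel relative to $L$ for $\mathcal{M}$ using Theorem~\ref{THM:propagator}, Duhamel relative to $\nu+v\cdot\nabla_x$ for $\mathcal{I}$ using the integration-by-parts trick of~\eqref{eq:integrationP}, the interpolation $\|\langle v\rangle^{m+1}u\|_{L^1}^2\le\|u\|_{L^1}\|\langle v\rangle^{2m+2}u\|_{L^1}$, and Cauchy--Schwarz in~$s$.

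Two execution slips in your treatment of~\eqref{eq:controlI}. First, the $K_0$ contribution after the $\nu^{-1}$ gain is $\int_0^t\|\langle v\rangle\,\partial_x^\alpha f(s)\|_{L^1}\,ds$, which is \emph{not} dominated by $\int_0^t\|\partial_x^\alpha f(s)\|_{L^1}\,ds$ as you write; the paper handles this with the same interpolation you already used for~$\mathcal{M}$, namely $C_2\|\langle v\rangle\,\partial_x^\alpha f\|_{L^1}\le \tfrac12\|\langle v\rangle^{2m+2}\partial_x^\alpha f\|_{L^1}+C_3\|\partial_x^\alpha f\|_{L^1}$, and absorbs the first piece into the left-hand side. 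Second, you cannot invoke the $\nu^{-1}$ gain a second time for the $K_1$ term: after the single use of~\eqref{eq:integrationP} the weight on $K_1\partial_x^\alpha f$ is $\langle v\rangle^{2m+1}$, and~\eqref{eq:mK1} raises it back to $\langle v\rangle^{2m+2}$, producing exactly $\kappa C_\infty\int_0^t\|\langle v\rangle^{2m+2}\partial_x^\alpha f\|_{L^1}\,ds$. This cannot be ``absorbed into $\mathcal{M}$'' (the weight is far too heavy); the paper simply absorbs this $\kappa C_\infty\,\mathcal{I}$ term into the left-hand side using the smallness of~$\kappa$. With these two corrections your argument matches the paper line by line.
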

This lemma will be proven below.

We are now ready to prove our main result, Theorem ~\ref{THM:MainTHM}.\\
{\bf{Proof of Theorem ~\ref{THM:MainTHM}}} By local wellposedness of the equation, there exists a time interval $[0,T]$, $T=T(\kappa),$ such that
\begin{equation}\label{eq:assum}
\mathcal{M}(t)\leq \kappa^{-\frac{1}{4}},\ \text{for any time} \ t\in [0,T].
\end{equation}
We move the term $C\kappa \mathcal{I}(t)\mathcal{M}(t)$ on the right hand side of ~\eqref{eq:controlI} to the left hand side and then use the fact that $C\kappa \mathcal{M}(t)\leq \frac{1}{2}$ to conclude that
\begin{equation}\label{eq:prelinI}
\mathcal{I}(t) \leq 2C[\sum_{|\alpha|\leq 8}\|\langle v\rangle^{2m}\partial_{x}^{\alpha}f_{0}\|_{L^{1}}+\mathcal{M}(t)].
\end{equation}
Plugging this bound into the right hand side of ~\eqref{eq:controlM} and using ~\eqref{eq:assum}, we obtain that
$$\mathcal{M}(t)\leq C (\sum_{|\alpha|\leq 8}\|\langle v\rangle^{m}\partial_{x}^{\alpha}f_0\|_{L^{1}}+\kappa^{\frac{1}{2}}\mathcal{M}(t)).$$
This, together with the fact $C\kappa^{\frac{1}{2}}\leq \frac{1}{2},$ implies that
\begin{equation}\label{eq:prelinM}
\mathcal{M}(t)\leq 2C \sum_{|\alpha|\leq 8}\|\langle v\rangle^{m}\partial_{x}^{\alpha}f_0\|_{L^{1}}, \ \text{for any time}\ t\in [0,T].
\end{equation}
This in turn implies that ~\eqref{eq:assum} holds on a larger time interval. By running the arguments ~\eqref{eq:assum}-~\eqref{eq:prelinM} iteratively we find that ~\eqref{eq:prelinM} holds on the time interval $[0,\infty).$

Using the definition of $\mathcal{M}$, in ~\eqref{eq:difCont}, we obtain that, for any time $t\in [0,\infty),$
\begin{equation}
\sum_{|\alpha|\leq 8}\|\partial_{x}^{\alpha}f(t)\|_{L^{1}}\leq 2C e^{-C_0 t} \sum_{|\alpha|\leq 8}\|\langle v\rangle^{m}\partial_{x}^{\alpha}f_0\|_{L^{1}},
\end{equation}
which together with the definition of $f$, see ~\eqref{eq:difF}, implies inequality ~\eqref{eq:expon} in Theorem ~\ref{THM:MainTHM}.

The proof of Theorem ~\ref{THM:MainTHM} is complete.
\begin{flushright}
$\square$
\end{flushright}
%%%%%%%%%%%%%%%%%%%%%%%%%%%%%%%%%%%%%%%%%%%%%%%%%%%%%%%%%%%%%%%%%%%%%%%%%%%%%%%%%%%%%%%
%%%%%%%%%%%%%%%%%%%%%%%%%%%%%%%%%%%%%%%%%%%%%%%%%%%%%%%%%%%%%%%%%%%%%%%%%%%%%%%%%%%%%%%%%%%%%%%%%%%5
\subsection{Proof of Lemma ~\ref{LM:Majorants}}
\begin{proof}
We apply Duhamel's principle to rewrite the Boltzmann equation ~\eqref{eq:ALinear} as
$$
f=e^{-tL}(1-P_0)f_0+\kappa\int_{0}^{t}e^{-(t-s)L}(1-P_0)Q(f,f)(s)\ ds,
$$ Here, the fact that $(1-P_0)f=f$, which is implied by ~\eqref{eq:difF} and the definition of $P_0$ in ~\eqref{eq:difProjection}, has been used. We apply the propagator estimate in Theorem ~\ref{THM:propagator} to conclude that, for any $\alpha\in (\mathbb{Z}^{+})^{3}$ with $|\alpha|\leq 8,$
\begin{equation}\label{eq:estPre}
\begin{array}{lll}
\|\partial_{x}^{\alpha}f(\cdot,t)\|_{L^{1}}&\lesssim & e^{-C_{0}t}\|\langle v\rangle^{m}\partial_{x}^{\alpha}f_0\|_{L^{1}}+\kappa \int_{0}^{t} e^{-C_{0}(t-s)}
\|\langle v\rangle^{m}\partial_{x}^{\alpha}Q(f,f)(s)\|_{L^{1}}\ ds
\end{array}
\end{equation}
To estimate the nonlinear term on the right hand side, we use techniques similar to those in ~\eqref{eq:FirstStep} to obtain
$$
\begin{array}{lll}
\|\langle v\rangle^{m}\partial_{x}^{\alpha}Q(f,f)\|_{L^{1}}&\lesssim &\displaystyle\sum_{l=0}^{m} \sum_{|\beta_1|\leq 8}\|\langle v\rangle^{k} \partial_{x}^{\beta_1}f\|_{L{1}}\sum_{|\beta_2|\leq 8}\|\langle v\rangle^{m+1-k}\partial_{x}^{\beta_2}f\|_{L^{1}}\\
& &\\
&\lesssim &\displaystyle\sum_{|\beta_1|,\ |\beta_2|\leq 8}\|\langle v\rangle^{m+1}\partial_{x}^{\beta_1}f\|_{L^{1}} \|\partial_{x}^{\beta_2}f\|_{L^{1}}.
\end{array}
$$
To the term $\|\langle v\rangle^{m}\partial_{x}^{\beta_1}f\|_{L^{1}}$ on the right hand side we apply the Schwarz inequality to obtain
$$\|\langle v\rangle^{m+1}\partial_{x}^{\beta}f\|_{L^{1}}^2 \leq \|\langle v\rangle^{2m+2}\partial_{x}^{\beta}f\|_{L^{1}}\|\partial_{x}^{\beta}f\|_{L^{1}}.$$ Plugging this into ~\eqref{eq:estPre}, we obtain that
$$
\|\partial_{x}^{\alpha}f\|_{L^{1}}\lesssim  e^{-C_0 t}\|\langle v\rangle^{m}\partial_{x}^{\alpha}f_0\|_{L^{1}}+\displaystyle\kappa\sum_{|\beta_1|,\ |\beta_2|\leq 8} \int_{0}^{t} e^{-C_0(t-s)}
\|\langle v\rangle^{2m+2}\partial_{x}^{\beta_1}f\|_{L^{1}}^{\frac{1}{2}} \|\partial_{x}^{\beta_2}f\|_{L^{1}}^{\frac{3}{2}}\ ds$$
Applying the Schwarz inequality again and using the definitions of $\mathcal{M}$ and $\mathcal{I},$ we find that $$
\begin{array}{lll}
\|\partial_{x}^{\alpha}f\|_{L^{1}}
&\lesssim & e^{-C_0 t}\|\langle v\rangle^{m}\partial_{x}^{\alpha}f_0\|_{L^{1}}+\displaystyle\sum_{|\beta_1|,\ |\beta_2|\leq 8}\kappa [\int_{0}^{t} e^{-2C_0(t-s)}
 \|\partial_{x}^{\beta_1}f\|_{L^{1}}^{3}\ ds]^{\frac{1}{2}} [\int_{0}^{t}\|\langle v\rangle^{2m+2}\partial_{x}^{\beta_2}f\|_{L^{1}} \ ds]^{\frac{1}{2}}\\
& &\\
&\leq & e^{-C_0 t}\|\langle v\rangle^{m}\partial_{x}^{\alpha}f_0\|_{L^{1}} +\kappa e^{-C_{0}t} \mathcal{M}^{\frac{3}{2}}\mathcal{I}^{\frac{1}{2}}.
\end{array}
$$
Recalling the definition of $\mathcal{M},$ we see that the proof of ~\eqref{eq:controlM} is complete.

To prove ~\eqref{eq:controlI}, or to estimate $\int_{0}^{t} \|\langle v\rangle^{2m+2} f(s)\|_{L^{1}} \ ds,$ we rewrite ~\eqref{eq:ALinear} as
$$\partial_{x}^{\alpha}f(t)= e^{-t(\nu+v\cdot \nabla_{x})}\partial_{x}^{\alpha}f_0+\int_{0}^{t} e^{-(t-s)(\nu+v\cdot\nabla_{x})} \partial_{x}^{\alpha} H(s)\ ds, $$ where $H(s)$ is defined by
$$H(s):=K_{0}f(s)+\kappa K_{1}f(s)+\kappa Q(f,f)(s).$$ By direct computation and the fact that $L^{1}$-norm is preserved under the mapping $e^{-tv\cdot \nabla_{x}}$ we obtain
$$
\begin{array}{lll}
\|\langle v\rangle^{2m+2}\partial_{x}^{\alpha}f(t)\|_{L^{1}}&\leq &\|\langle v\rangle^{2m+2}e^{-t[\nu+v\cdot\nabla_{x}]}\partial_{x}^{\alpha}f_{0}\|_{L^{1}} +\int_{0}^{t}\|\langle v\rangle^{2m+2}e^{-(t-s)[\nu+v\cdot \nabla_{x}]}\partial_{x}^{\alpha}H(s)\|_{L^{1}}\ ds\\
& &\\
&=&\|\langle v\rangle^{2m+2}e^{-t\nu}\partial_{x}^{\alpha}f_{0}\|_{L^{1}} +\int_{0}^{t}\|\langle v\rangle^{2m+2}e^{-(t-s)\nu}\partial_{x}^{\alpha}H(s)\|_{L^{1}}\ ds
\end{array}
$$
Integrate both sides from $0$ to $t$, and use the obvious fact that $\int_{0}^{t}\|g(s)\|_{L^{1}} \ ds=\|\int_{0}^{t}|g|(s)\ ds\|_{L^{1}}$ we arrive at
\begin{equation*}
\int_{0}^{t} \|\langle v\rangle^{2m+2}\partial_{x}^{\alpha}f(s)\|_{L^{1}}\ ds \leq \|\int_{0}^{t}\langle v\rangle^{2m+2}e^{-s\nu}|\partial_{x}^{\alpha}f_{0}| \ ds\|_{L^{1}}+\|\int_{0}^{t}\int_{0}^{s} \langle v\rangle^{2m+2} e^{-(s-s_1)\nu}|\partial_{x}^{\alpha}H(s_1)|\ ds_1ds\|_{L^{1}}.
\end{equation*}
The first term on the right hand side can be integrated explicitly. For the second term, we integrate by parts in the variable $s$. We find that
\begin{equation}\label{eq:inteByParts}
\begin{array}{lll}
\int_{0}^{t} \|\langle v\rangle^{2m+2}\partial_{x}^{\alpha}f(s)\|_{L^{1}}\ ds&\leq &\|\nu^{-1}\langle v\rangle^{2m+2} \partial_{x}^{\alpha}f_0\|_{L^{1}}+\int_{0}^{t} \|\nu^{-1}\langle v\rangle^{2m+2} \partial_{x}^{\alpha}H(s)\|_{L^{1}}\ ds\\
& &\\
&\lesssim &\|\langle v\rangle^{2m+1} \partial_{x}^{\alpha}f_0\|_{L^{1}}+\int_{0}^{t} \|\langle v\rangle^{2m+1} \partial_{x}^{\alpha}H(s)\|_{L^{1}}\ ds.
\end{array}
\end{equation} In the last step we use the estimate for $\nu=\nu_0+C_{\infty}\kappa \nu_1$ in ~\eqref{eq:BdBelow}, which, through its definition, makes it necessary to require that $r_0$ in ~\eqref{eq:NLBL1} be unbounded.

This together with the definition of $H$, the estimates on $K_0$ and $K_1$ in Lemma ~\ref{LM:EstNonline} and on the nonlinearity in ~\eqref{eq:AppliEmbed}, implies that there exist constants $C_1$ and $C_2$ such that
\begin{equation}\label{eq:tedious}
\begin{array}{lll}
& &\int_{0}^{t} \|\langle v\rangle^{2m+2}\partial_{x}^{\alpha}f(s)\|_{L^{1}}\ ds\\
& &\\
&\leq &C_1[\|\langle v\rangle^{2m+1} \partial_{x}^{\alpha}f_0\|_{L^{1}}+
C_{\infty}\kappa \int_{0}^{t} \|\langle v\rangle^{2m+1}K_1\partial_{x}^{\alpha} f(s)\|_{L^{1}}\ ds \\
& &\\
& &+\int_{0}^{t} \|\langle v\rangle^{2m+1}K_0\partial_{x}^{\alpha} f(s)\|_{L^{1}}\ ds+\kappa\int_{0}^{t} \|\langle v\rangle^{2m+1}\partial_{x}^{\alpha}Q(f,f)(s)\|_{L^{1}}\ ds\\
& &\\
&\leq & C_2[\|\langle v\rangle^{2m+1} \partial_{x}^{\alpha}f_0\|_{L^{1}}+\kappa C_{\infty} \int_{0}^{t} \|\langle v\rangle^{2m+2}\partial_{x}^{\alpha} f(s)\|_{L^{1}}\ ds\\
& &\\
& &+\int_{0}^{t} \|\langle v\rangle\partial_{x}^{\alpha} f(s)\|_{L^{1}}\ ds+\kappa \displaystyle\sum_{|\beta_1|,\ |\beta_2|\leq 8}\int_{0}^{t} \|\langle v\rangle^{2m+2}\partial_{x}^{\beta_1}f(s)\|_{L^{1}} \|\partial_{x}^{\beta_2}f(s)\|_{L^{1}}\ ds].
\end{array}
\end{equation}
We use the Schwarz inequality to estimate the third term, $\int_{0}^{t} \|\langle v\rangle \partial_{x}^{\alpha}f(s)\|_{L^{1}}\ ds,$ on the right hand side:
$$
C_2\int_{0}^{t} \|\langle v\rangle\partial_{x}^{\alpha} f(s)\|_{L^{1}}\ ds\leq \frac{1}{2}\int_{0}^{t} \|\langle v\rangle^{2m+2} \partial_{x}^{\alpha}f(s)\|_{L^{1}}\ ds+C_3(m)\int_{0}^{t} \| \partial_{x}^{\alpha}f(s)\|_{L^{1}}\ ds,
$$ for any $m\geq 0$ with $C_{3}(m)\geq 0$.

Inserting this in ~\eqref{eq:tedious} and using that $\kappa>0$ is a small constant, we find that
$$\int_{0}^{t}\|\langle v\rangle^{2m+2}\partial_{x}^{\alpha}f(s)\|_{L^{1}}\ ds\lesssim \|\langle v\rangle^{2m+1}\partial_{x}^{\alpha}f_0\|_{L^{1}}
+\kappa \displaystyle\sum_{|\beta|\leq 8}\int_{0}^{t} \|\langle v\rangle^{2m+2} \partial_{x}^{\beta}f(s)\|_{L^{1}}\ ds\mathcal{M}(t)+\mathcal{M}(t),$$ which together with the definition of $\mathcal{I}$ in ~\eqref{eq:difCont} implies the desired estimate ~\eqref{eq:controlI}.
\end{proof}

\appendix
%%%%%%%%%%%%%%%%%%%%%%%%%%%%%%%%%%%%%%%%%%%%%%%%%%%%%%%%%%%%%%%%%%%%%%%%%%%%%%
%%%%%%%%%%%%%%%%%%%%%%%%%%%%%%%%%%%%%%%%%%%%%%%%%%%%%%%%%%%%%%%%%%%%%%%%%%%%%%%%%%%%%%%%%%%%%%%5
%%%%%%%%%%%%%%%%%%%%%%%%%%%%%%%%%%%%%%%%%%
\section{Proof of Lemma ~\ref{LM:EstNonline}}\label{sec:nonlinear}
It is easy to derive ~\eqref{eq:globalLower} and ~\eqref{eq:estK0} by the definitions of $\nu_0,\ \nu_1$ and $K_0$. We therefore omit the details.

We start with ~\eqref{eq:estNonL}. By direct computation
\begin{equation}\label{eq:Angular}
\begin{array}{lll}
\|\langle v\rangle^{m}Q(f,g)\|_{L^{1}(\mathbb{R}^3)}
&=& \int_{\mathbb{R}^{3}\times \mathbb{R}^{3}\times \mathbb{S}^{2}} \langle v\rangle^{m} |(u-v)\cdot \omega| f(u') |g|(v') \ d^3 u d^3 v d^2 \omega\\
& &\\
& &+\int_{\mathbb{R}^{3}\times \mathbb{R}^{3}\times \mathbb{S}^{2}} \langle v\rangle^{m} |(u-v)\cdot \omega| f(u) |g|(v) \ d^3 u d^3 v d^2 \omega.
\end{array}
\end{equation}

It is easy to control the second term on the right hand side.

We then turn to the first term. For any fixed $\omega\in \mathbb{S}^{2},$ the mapping from $(u,v)\in \mathbb{R}^{6}$ to $(u',v')\in \mathbb{R}^{6}$ is a linear symplectic transformation, hence
\begin{equation}
d^3u d^3 v=d^3 u' d^3 v'
\end{equation} where, $u'$ and $v'$ are defined ~\eqref{eq:NLBL1}. This together with the observation that
\begin{equation}
\langle v\rangle^{m}\lesssim \langle u'\rangle^{m}+\langle v'\rangle^{m},\ \text{and}\ |(u-v)\cdot \omega|\lesssim |u'|+|v'|
\end{equation} and ~\eqref{eq:Angular} obviously implies ~\eqref{eq:estNonL}.

As one can infer from the definition $K_1$ in ~\eqref{eq:ALinear}, ~\eqref{eq:mK1} is a special case of ~\eqref{eq:estNonL} by setting $f$ or $g$ to be $M=e^{-|v|^2}.$
\begin{flushright}
$\square$
\end{flushright}
%%%%%%%%%%%%%%%%%%%%%%%%%%%
\section{Proof of Lemma ~\ref{LM:distanceContour}}\label{sec:distanceContour}
We start by simplifying the problem. Using the definitions of the operators $L_{{\bf{n}}}$, ${\bf{n}}\in \mathbb{Z}^3$, in ~\eqref{eq:observation}, $K$ in ~\eqref{eq:difK}, and $\nu$ in ~\eqref{eq:difNu} we find that $$L_{{\bf{n}}}=\nu_0-K_0+i{\bf{n}}\cdot v +C_{\infty}\kappa [\nu_1+K_1].$$
The smallness of the constant $\kappa$ suggests to consider $\nu_0-K_0+i{\bf{n}}\cdot v$ as the dominant part. We then convert the estimate on $L_{{\bf{n}}}-\zeta$ to one on $\nu_0-K_0+i{\bf{n}}\cdot v -\zeta$.

To render this idea mathematically rigorous, we show that, in order to prove invertibility of $L_{{\bf{n}}}-\zeta,\ \zeta\in \Omega_{{\bf{n}}},$ it is sufficient to prove this property for $1-K_{\zeta,{\bf{n}}}$, with $K_{\zeta,{\bf{n}}}$ defined by
\begin{equation}\label{eq:difKsn}
K_{\zeta,{\bf{n}}}:=K_0(\nu_0+i{\bf{n}} \cdot v -\zeta)^{-1}.
\end{equation} We rewrite $L_{{\bf{n}}}-\zeta$ as follows:
\begin{equation}\label{eq:rewriteLN}
\begin{array}{lll}
L_{{\bf{n}}}-\zeta&=&[1-K_{\zeta,{\bf{n}}}+C_{\infty}\kappa  (\nu_1+K_1)(\nu_0+i{\bf{n}}\cdot v-\zeta)^{-1}](\nu_0+i{\bf{n}}\cdot v-\zeta)\\
& &\\
&=&(1-K_{\zeta,{\bf{n}}})[1+C_{\infty}\kappa (1-K_{\zeta,{\bf{n}}})^{-1}(\nu_1+K_1)(\nu_0+i{\bf{n}}\cdot v-\zeta)^{-1}](\nu_0+i{\bf{n}}\cdot v-\zeta).
\end{array}
\end{equation} We have the following estimates on the different terms on the right hand side:
\begin{itemize}
\item[(1)]
Concerning $\nu_0+i{\bf{n}}\cdot v-\zeta,$ we observe that it is a multiplication operator. If the constants $\theta$ and $\frac{1}{\Psi}$ in the definition of the curves $\Gamma_{k,{\bf{n}}}, \ k=0,1,2,$ in ~\eqref{eq:difCurve}, are sufficiently small then there exists a constant $C$ such that for any $\zeta\in\Omega_{{\bf{n}}}$
\begin{equation}\label{eq:ReasonCurve}
|\nu_0+i {\bf{n}}\cdot v-\zeta|^{-1} \leq C(1+|v|)^{-1}.
\end{equation}
It is straightforward, but a little tedious to verify this. Details are omitted.
\item[(2)]
Concerning the term $1-K_{\zeta,{\bf{n}}},$ we have the following lemma.
\begin{lemma}\label{LM:compactLm}
Suppose that the constants $\Theta$ and $\frac{1}{\Psi}$ in ~\eqref{eq:difCurve} are sufficiently small.
Then, for any point $\zeta\in \Omega_{{\bf{n}}}$ and ${\bf{n}}\in \mathbb{Z}^{3}\backslash\{(0,0,0)\},$ we have that $1-K_{\zeta,{\bf{n}}}$ is invertible; its inverse satisfies the estimate $$\|(1-K_{\zeta,{\bf{n}}})^{-1}\|_{L^{1}\rightarrow L^{1}}\leq C,$$ where the constant $C$ is independent of ${\bf{n}}$ and $\zeta$.
\end{lemma}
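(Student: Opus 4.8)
The plan is to prove directly that, for $\mathbf{n}\neq(0,0,0)$ and $\zeta\in\overline{\Omega_{\mathbf{n}}}$, the operator $1-K_{\zeta,\mathbf{n}}$ is invertible on $L^{1}(\mathbb{R}^{3})$ with a bound on the inverse uniform in $\zeta$ and $\mathbf{n}$, via four ingredients: (i) collective compactness of the family $\{K_{\zeta,\mathbf{n}}\}$; (ii) the Fredholm alternative; (iii) injectivity of $1-K_{\zeta,\mathbf{n}}$, reduced to a spectral statement for the unperturbed operator $\nu_{0}+i\mathbf{n}\cdot v-K_{0}$; and (iv) a compactness--contradiction argument for the uniform bound. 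For (i), I would write $K_{\zeta,\mathbf{n}}=K_{0}R_{\zeta,\mathbf{n}}$, where $R_{\zeta,\mathbf{n}}$ denotes multiplication by $(\nu_{0}(v)+i\mathbf{n}\cdot v-\zeta)^{-1}$, and use the weighted spaces $L^{1}_{k}:=\{f:\langle v\rangle^{k}f\in L^{1}(\mathbb{R}^{3})\}$. Estimate \eqref{eq:ReasonCurve} gives that $R_{\zeta,\mathbf{n}}:L^{1}\to L^{1}_{1}$ is bounded with norm $\leq C$, uniformly in $\zeta\in\overline{\Omega_{\mathbf{n}}}$ and $\mathbf{n}$. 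On the other hand $K_{0}:L^{1}_{1}\to L^{1}$ is compact: for $f$ in the unit ball of $L^{1}_{1}$, the bounds $r_{0}(v,u)\lesssim e^{-|v|^{2}}\langle v-u\rangle$ and $|\partial_{v}r_{0}(v,u)|\lesssim e^{-|v|^{2}/2}\langle v-u\rangle^{1/2}$ (item (2) in Section \ref{sec:MainTHM}) yield $|K_{0}f(v)|+|\nabla_{v}K_{0}f(v)|\lesssim e^{-|v|^{2}/4}$, so $K_{0}$ maps this ball into a set that is bounded in $W^{1,1}(\mathbb{R}^{3})$ and uniformly dominated by an integrable function; by the Fr\'echet--Kolmogorov criterion this set is relatively compact in $L^{1}(\mathbb{R}^{3})$. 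Consequently each $K_{\zeta,\mathbf{n}}$ is compact, and the entire family maps the unit ball of $L^{1}$ into one fixed precompact set $\mathcal{K}\subset L^{1}$.

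For (ii)--(iii): by Riesz--Schauder theory $1-K_{\zeta,\mathbf{n}}$ is Fredholm of index $0$, so invertibility is equivalent to injectivity. If $(1-K_{\zeta,\mathbf{n}})f=0$, put $g:=R_{\zeta,\mathbf{n}}f$; then $(\nu_{0}+i\mathbf{n}\cdot v-\zeta)g=f=K_{0}g$, i.e. $(\nu_{0}+i\mathbf{n}\cdot v-K_{0})g=\zeta g$, so $\zeta$ is an eigenvalue of $L_{\mathbf{n}}^{(0)}:=\nu_{0}+i\mathbf{n}\cdot v-K_{0}$; moreover $g$ and $f$ then lie in every $L^{1}_{k}$ and have Gaussian decay by the smoothing in $K_{0}$, so classical estimates for the linearized collision operator apply to them. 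It thus suffices to show that $\overline{\Omega_{\mathbf{n}}}$ lies in the resolvent set of $L_{\mathbf{n}}^{(0)}$ for every $\mathbf{n}\neq0$. The essential spectrum of $L_{\mathbf{n}}^{(0)}$ coincides, by the relative compactness of $K_{0}$, with $\overline{\{\nu_{0}(v)+i\mathbf{n}\cdot v:v\in\mathbb{R}^{3}\}}$, which by \eqref{eq:globalLower} and $|\mathbf{n}\cdot v|\le|\mathbf{n}||v|\lesssim|\mathbf{n}|\nu_{0}(v)$ is contained in the wedge $\{a+ib:a\ge\Lambda,\ |b|\le C|\mathbf{n}|a\}$; choosing $\Theta$ small and $\Psi$ large puts this wedge strictly inside the region bounded by $\Gamma_{\mathbf{n}}$. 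The remaining discrete spectrum is controlled by the spectral gap: the unique zero mode of $\nu_{0}-K_{0}$ is the Maxwellian $e^{-|v|^{2}}$ (by detailed balance \eqref{eq:DetailBal}), and for $\mathbf{n}\neq0$ the transport term $i\mathbf{n}\cdot v$ removes this degeneracy, giving $\mathrm{Re}\,\sigma(L_{\mathbf{n}}^{(0)})\ge\Theta_{0}>0$ uniformly in $\mathbf{n}\neq0$ (a hypocoercivity estimate, cf. \cite{Ark1988,Mouhot2006}). Taking $\Theta<\Theta_{0}$ then gives $\overline{\Omega_{\mathbf{n}}}\cap\sigma(L_{\mathbf{n}}^{(0)})=\emptyset$, hence injectivity and invertibility.

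For (iv): if the inverses were not uniformly bounded, there would exist $(\zeta_{j},\mathbf{n}_{j})$ and $f_{j}$ with $\|f_{j}\|_{L^{1}}=1$ and $(1-K_{\zeta_{j},\mathbf{n}_{j}})f_{j}=:h_{j}\to0$ in $L^{1}$. Since $K_{\zeta_{j},\mathbf{n}_{j}}f_{j}=f_{j}-h_{j}$ lies in the fixed precompact set $\mathcal{K}$, a subsequence gives $f_{j}\to\phi$ in $L^{1}$ with $\|\phi\|_{L^{1}}=1$. If $(\zeta_{j},\mathbf{n}_{j})$ stays bounded, then, along a further subsequence, $\mathbf{n}_{j}\equiv\mathbf{n}$ and $\zeta_{j}\to\zeta_{\infty}\in\overline{\Omega_{\mathbf{n}}}$, and passing to the limit yields $(1-K_{\zeta_{\infty},\mathbf{n}})\phi=0$, contradicting the injectivity established above. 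If $|\mathbf{n}_{j}|+|\zeta_{j}|\to\infty$, then $(\nu_{0}(v)+i\mathbf{n}_{j}\cdot v-\zeta_{j})^{-1}\to0$ for a.e. $v$ while staying bounded by $C\langle v\rangle^{-1}$, so dominated convergence gives $R_{\zeta_{j},\mathbf{n}_{j}}\phi\to0$ in $L^{1}_{1}$, while the uniform bound from (i) gives $R_{\zeta_{j},\mathbf{n}_{j}}(f_{j}-\phi)\to0$ in $L^{1}_{1}$; hence $K_{\zeta_{j},\mathbf{n}_{j}}f_{j}=K_{0}R_{\zeta_{j},\mathbf{n}_{j}}f_{j}\to0$ in $L^{1}$, forcing $\phi=0$, a contradiction. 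This establishes the uniform bound.

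The step I expect to be the main obstacle is (iii): the uniform-in-$\mathbf{n}$ spectral gap for $L_{\mathbf{n}}^{(0)}=\nu_{0}+i\mathbf{n}\cdot v-K_{0}$ with $\mathbf{n}\neq0$, together with the verification that, for suitable $\Theta$ and $\Psi$, the curve $\Gamma_{\mathbf{n}}$ really encloses all of $\sigma(L_{\mathbf{n}}^{(0)})$ --- the essential part as well as any outlying eigenvalues. This is the one genuinely spectral ingredient and the point where the existing theory of the linearized collision operator is needed; the other steps are soft functional analysis once the kernel estimates of Lemma \ref{LM:EstNonline} and \eqref{eq:ReasonCurve} are in hand.
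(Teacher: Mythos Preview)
Your proposal is correct and structurally close to the paper's argument: collective compactness, a contradiction argument for the uniform bound, and reduction of injectivity to a spectral statement for $\nu_{0}+i\mathbf{n}\cdot v-K_{0}$ are exactly the ingredients the paper uses. Two points of comparison are worth noting.

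First, your use of the Riesz--Schauder alternative in step~(ii) is cleaner than what the paper does. The paper proves surjectivity separately (its Lemma~\ref{LM:surjective}) by a continuation argument in a parameter $\delta\in[0,1]$ for the family $1-\delta K_{\zeta,\mathbf{n}}$, showing the set of $\delta$ for which surjectivity fails is closed and cannot contain $0$, and deriving a contradiction with the lower bound. Invoking Fredholm index~$0$ directly, as you do, short-circuits this.

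Second, the spectral input you flag in~(iii) is precisely where the paper does the real work, and it does \emph{not} appeal to an off-the-shelf hypocoercivity estimate. Instead it exploits detailed balance to pass to $L^{2}$: given a null vector $g_{\infty}\in L^{1}$ of $1-K_{\zeta_{\infty},\mathbf{n}_{\infty}}$ with $\zeta_{\infty}$ on the imaginary axis, one sets $\tilde g_{\infty}:=e^{|v|^{2}/2}(-\nu_{0}-i\mathbf{n}_{\infty}\cdot v+\zeta_{\infty})g_{\infty}\in L^{2}$ (the Gaussian decay of $K_{0}$ makes this legitimate) and obtains $(-\nu_{0}+\tilde K_{0}-i\mathbf{n}_{\infty}\cdot v+\zeta_{\infty})\tilde g_{\infty}=0$ with $\tilde K_{0}:=e^{|v|^{2}/2}K_{0}e^{-|v|^{2}/2}$ compact, self-adjoint, and with strictly positive kernel. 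A Perron--Frobenius style argument (the paper's Lemma~\ref{LM:lowestEi}) then gives that $0$ is the simple top eigenvalue of $-\nu_{0}+\tilde K_{0}$ with eigenvector $e^{-|v|^{2}/2}$; taking real parts forces $\tilde g_{\infty}$ parallel to $e^{-|v|^{2}/2}$, which is incompatible with $\mathbf{n}_{\infty}\neq 0$. This is self-contained and avoids needing a uniform-in-$\mathbf{n}$ gap for the full $L^{1}$ operator. Note also that the paper runs the contradiction argument for the uniform lower bound and injectivity simultaneously (your steps~(iii) and~(iv) merged), so the limiting $\zeta_{\infty}$ automatically lands on $\mathrm{Re}\,\zeta=0$; this means one only needs to exclude eigenvalues on the imaginary axis, not throughout $\Omega_{\mathbf{n}}$, which is a slightly lighter task than what you set yourself.
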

This lemma will be reformulated as Lemmas ~\ref{LM:invertibility} and ~\ref{LM:surjective} below.

\item[(3)]
With ~\eqref{eq:ReasonCurve}, Lemma ~\ref{LM:compactLm} and our estimates on $\nu_1$ and $K_1$ in ~\eqref{eq:globalLower} and ~\eqref{eq:mK1}, we conclude that if $\kappa$ is sufficiently small then the operator $C_{\infty}\kappa (1-K_{\zeta,{\bf{n}}})^{-1}(\nu_1+K_1)(\nu_0+i{\bf{n}}\cdot v-\zeta)^{-1}:\ L^{1}(\mathbb{R}^3)\rightarrow L^1(\mathbb{R}^3)$ in ~\eqref{eq:rewriteLN} is small in norm $\|\cdot\|_{L^1\rightarrow L^1}$. This proves that
\begin{equation}
1+C_{\infty}\kappa (1-K_{\zeta,{\bf{n}}})^{-1}(\nu_1+K_1)(\nu_0+i{\bf{n}}\cdot v-\zeta)^{-1}:\ L^{1}(\mathbb{R}^3)\rightarrow L^1(\mathbb{R}^3)
\end{equation} is invertible.
\end{itemize}
The results above complete the proof of Lemma ~\ref{LM:distanceContour}, assuming that Lemma ~\ref{LM:compactLm} holds.

We divide the proof of Lemma ~\ref{LM:compactLm} into steps. In the first step we prove
\begin{lemma}\label{LM:invertibility}
There exists a constant $C>0$ such that, for any $\zeta\in \Omega_{{\bf{n}}}$ and ${\bf{n}}\in \mathbb{Z}^{3}\backslash\{(0,0,0)\},$
\begin{equation}\label{eq:BdBelow}
\|(1-K_{\zeta,{\bf{n}}}) g\|_{L^{1}}\geq C\|g\|_{L^{1}}.
\end{equation}
\end{lemma}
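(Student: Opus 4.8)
The plan is to bound $1-K_{\zeta,{\bf{n}}}$ from below by combining a crude reverse--triangle estimate with the strong Gaussian localization in velocity of the gain operator $K_0$ and a compactness argument at small velocities; the reverse--triangle estimate alone is useless because, by the very structure of the linearized collision kernel, $\|K_{\zeta,{\bf{n}}}\|_{L^1\to L^1}$ can be arbitrarily close to $1$. First I would record that $\nu_0(v)=\int_{\mathbb{R}^3}r_0(u,v)\,d^3u$ together with Fubini gives the pointwise bound
\[
\|K_{\zeta,{\bf{n}}}g\|_{L^1}\ \le\ \int_{\mathbb{R}^3}\nu_0(u)\,\frac{|g(u)|}{|\nu_0(u)+i{\bf{n}}\cdot u-\zeta|}\,d^3u ,
\]
and that, by \eqref{eq:ReasonCurve}, $|\nu_0(u)+i{\bf{n}}\cdot u-\zeta|\ge c\langle u\rangle$ for all $\zeta\in\Omega_{{\bf{n}}}$. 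Writing $r_0(v,u)=e^{-|v|^2}\rho(v,u)$ with $\rho$ growing only polynomially in $|u-v|$, the factor $e^{-|v|^2}$ forces, for every $R>0$,
\[
\big\|\,\mathbf{1}_{\{|v|\ge R\}}\,K_{\zeta,{\bf{n}}}g\,\big\|_{L^1}\ \le\ \varepsilon(R)\,\|g\|_{L^1},\qquad \varepsilon(R)\xrightarrow[R\to\infty]{}0 ,
\]
uniformly in ${\bf{n}}$ and in $\zeta\in\Omega_{{\bf{n}}}$. Restricting to $\{|v|\ge R\}$ then yields $\|(1-K_{\zeta,{\bf{n}}})g\|_{L^1}\ge\|\mathbf{1}_{\{|v|\ge R\}}g\|_{L^1}-\varepsilon(R)\|g\|_{L^1}$, so the desired estimate holds with $C=\tfrac14$ as soon as $\|\mathbf{1}_{\{|v|\ge R\}}g\|_{L^1}\ge\tfrac12\|g\|_{L^1}$ and $R$ is large.

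It remains to treat $g$ with $\|\mathbf{1}_{\{|v|\le R\}}g\|_{L^1}>\tfrac12\|g\|_{L^1}$, $R$ now fixed from the previous step, and here I would argue by contradiction. If no uniform constant works there are ${\bf{n}}_j\in\mathbb{Z}^3\setminus\{0\}$, $\zeta_j\in\Omega_{{\bf{n}}_j}$, and $g_j$ with $\|g_j\|_{L^1}=1$, $\|\mathbf{1}_{\{|v|\le R\}}g_j\|_{L^1}>\tfrac12$, and $\|(1-K_{\zeta_j,{\bf{n}}_j})g_j\|_{L^1}\to0$; hence $g_j=K_0h_j+o(1)$ in $L^1$, with $h_j:=(\nu_0+i{\bf{n}}_j\cdot v-\zeta_j)^{-1}g_j$ and $\|\langle v\rangle h_j\|_{L^1}\le c^{-1}$. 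Using the hypotheses (a), (b) on $r_0$ one gets $|K_0h_j(v)|+|\nabla_vK_0h_j(v)|\lesssim\langle v\rangle e^{-|v|^2/2}$ uniformly in $j$, so $\{K_0h_j\}$ is bounded in $W^{1,1}(\mathbb{R}^3)$ and uniformly tight, hence precompact in $L^1(\mathbb{R}^3)$; along a subsequence $g_j\to g$ in $L^1$ with $\|g\|_{L^1}=1$. Now if $|{\bf{n}}_j|\to\infty$ or $|\zeta_j|\to\infty$ I would show $\|h_j\|_{L^1}\to0$ — in the first case by splitting $\mathbb{R}^3$ into a thin slab about the hyperplane $\{{\bf{n}}_j\cdot u=\mathrm{Im}\,\zeta_j\}$, where the already established uniform bound on $|g_j|$ controls the contribution, and its complement, where $|\nu_0+i{\bf{n}}_j\cdot u-\zeta_j|$ is large — so that $K_0h_j\to0$ in $L^1$, contradicting $g\ne0$. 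Otherwise ${\bf{n}}_j\equiv{\bf{n}}\ne0$ and $\zeta_j\to\zeta_*\in\overline{\Omega_{{\bf{n}}}}$; then $h_j\to h_*:=(\nu_0+i{\bf{n}}\cdot v-\zeta_*)^{-1}g$ in weighted $L^1$ (again by \eqref{eq:ReasonCurve}), so in the limit $(1-K_{\zeta_*,{\bf{n}}})g=0$, equivalently
\[
\big(\nu_0-K_0+i{\bf{n}}\cdot v-\zeta_*\big)h_*=0,\qquad h_*\ne0 ,
\]
i.e.\ $\zeta_*$ is an eigenvalue of the operator $\nu_0-K_0+i{\bf{n}}\cdot v$ (the ${\bf{n}}$-th Fourier mode of the $\kappa=0$ part of $L$) lying in $\overline{\Omega_{{\bf{n}}}}$.

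The main obstacle — and the step genuinely using detailed balance — is excluding such eigenvalues from $\overline{\Omega_{{\bf{n}}}}$. Detailed balance \eqref{eq:DetailBal} (with $\beta m=2$) makes $\nu_0-K_0$ self--adjoint and nonnegative on the weighted space $L^2(e^{|v|^2}d^3v)$, with one--dimensional kernel $\mathrm{span}\{e^{-|v|^2}\}$ and, by the spectral information on $L_0$ underlying Lemma \ref{LM:roughEst} (cf.\ the references cited there), a spectral gap above $0$. Pairing the eigenvalue equation $(\nu_0-K_0+i{\bf{n}}\cdot v)\psi=\zeta_*\psi$ with $\psi$ in this inner product, and using that $i{\bf{n}}\cdot v$ is skew--adjoint while $({\bf{n}}\cdot v)e^{-|v|^2}$ is not proportional to $e^{-|v|^2}$, forces $\mathrm{Re}\,\zeta_*>0$; it then suffices to fix $\Theta$ small and $\Psi$ large in the definition \eqref{eq:difCurve} of $\Gamma_{{\bf{n}}}$ (as the text already anticipates, choosing them independently of ${\bf{n}}$) so that $\overline{\Omega_{{\bf{n}}}}$ avoids all such eigenvalues, contradicting $\zeta_*\in\overline{\Omega_{{\bf{n}}}}$. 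Two points require care: the lower bound on $\mathrm{Re}\,\zeta_*$ must be made uniform in ${\bf{n}}\ne0$ (the delicate ``large Fourier mode'' regime, where the transport term dominates), and eigenvalues far out along $\Gamma_2({\bf{n}})\cup\Gamma_3({\bf{n}})$ must be ruled out — they lie outside the essential--spectrum wedge of the multiplier $\nu_0+i{\bf{n}}\cdot v$ and are excluded by the same self--adjointness computation combined with the relative boundedness estimate $|{\bf{n}}\cdot v|\le|{\bf{n}}|\nu_0/\Lambda$ from \eqref{eq:globalLower}.
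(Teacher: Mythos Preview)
Your proposal is essentially the paper's argument: contradiction, compactness of $K_{\zeta,{\bf n}}$ (via the Gaussian smoothing of $K_0$), boundedness of $({\bf n}_j,\zeta_j)$, and exclusion of the limiting eigenvalue by passing to the weighted $L^2$ space where detailed balance makes $\nu_0-K_0$ self-adjoint, nonnegative, with one-dimensional kernel $\mathrm{span}\{e^{-|v|^2}\}$. The paper packages the compactness as a separate lemma (your $W^{1,1}$/tightness bound is exactly its content) and handles the unbounded-parameter case by the simpler observation that $K_{\zeta_j,{\bf n}_j}g_\infty\to 0$ when $|{\bf n}_j|\to\infty$ or $|\zeta_j|\to\infty$ (working with the fixed limit $g_\infty$ rather than your $h_j$), but the substance is the same.

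Two organizational points are worth noting. First, your initial large/small-velocity split is unnecessary: since $K_{\zeta,{\bf n}}g_j$ is already precompact in $L^1$ uniformly in $(\zeta,{\bf n})$, one gets $g_j\to g_\infty\neq 0$ directly from $(1-K_{\zeta_j,{\bf n}_j})g_j\to 0$ without any preliminary reduction. Second, and more importantly, the paper sidesteps the two ``points requiring care'' you flag at the end by folding the choice of $\Theta$ into the contradiction hypothesis: negating ``there exist $C$ and $\Theta$ such that \dots'' produces a sequence with $\mathrm{Re}\,\zeta_m=\epsilon_m\to 0$, so the limit $\zeta_*=ih_\infty$ lands \emph{on} the imaginary axis. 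Then the $L^2$ pairing gives $\langle\tilde g_\infty,(\nu_0-\tilde K_0)\tilde g_\infty\rangle=0$ exactly, forcing $\tilde g_\infty\propto e^{-|v|^2/2}$, which is immediately incompatible with $(i{\bf n}_\infty\cdot v+ih_\infty)\tilde g_\infty=0$ for ${\bf n}_\infty\neq 0$. Your route instead yields only $\mathrm{Re}\,\zeta_*>0$, which does not by itself contradict $\zeta_*\in\overline{\Omega_{\bf n}}$, and so forces the additional (doable, but avoidable) work you describe: a uniform-in-${\bf n}$ eigenvalue gap and a separate treatment of the region beyond $\Gamma_2\cup\Gamma_3$.
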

This will be proven in Subsection ~\ref{subsec:onto} below.

We now present the strategy of the proof of Lemma ~\ref{LM:invertibility}. Our key observation is that the bounded operators $K_{\zeta,{\bf{n}}}$, $\zeta\in \Omega_{{\bf{n}}}$, ${\bf{n}}\in \mathbb{Z}^{3}\backslash\{(0,0,0)\};$ are compact (see Lemma ~\ref{LM:compactness} below). Hence if ~\eqref{eq:BdBelow} does not hold, then there exist some $\zeta_0\in \Omega_{{\bf{n}_0}},$ ${\bf{n}_0}\in \mathbb{Z}^{3}\backslash\{(0,0,0)\}$ and some nontrivial function $g\in L^{1}(\mathbb{R}^3)$ such that $[1-K_{\zeta_0,{\bf{n}_0}}]g=0.$ From the definition of $K_{\zeta_0,{\bf{n}_0}}$ in ~\eqref{eq:difKsn} and the properties of $K_0$ in ~\eqref{eq:difK0} (see also ~\eqref{eq:R0exam}) then we infer that the function $\tilde{g}:=e^{\frac{1}{2}|v|^2} (-\nu_0-i{\bf{n}}_{0}\cdot v+\zeta_0)g$ belongs to $L^{2}(\mathbb{R}^3)$ and satisfies the equation $$(-\nu_0+\tilde{K}_0-i{\bf{n}_0}\cdot v+\zeta_0)\tilde{g}=0.$$ Here $\tilde{K}_{0}:=e^{\frac{1}{2}|v|^2}K_0 e^{-\frac{1}{2}|v|^2}:\ L^{2}(\mathbb{R}^3)\rightarrow L^{2}(\mathbb{R}^3)$ is a self-adjoint and compact operator. By considering spectral properties of $-\nu_0+\tilde{K}_0:\ L^{2}(\mathbb{R}^3)\rightarrow L^{2}(\mathbb{R}^3)$, we exclude the possibility that $\zeta_0\in \Omega_{{\bf{n}_0}}.$ For details we refer the reader to subsection ~\ref{subsec:onto} below.

However, ~\eqref{eq:BdBelow} does not guarantee that the mapping $1-K_{\zeta,{\bf{n}}}$ is onto. To show this we prove, in a second step, the following lemma.
\begin{lemma}\label{LM:surjective} For any $\zeta\in \Omega_{{\bf{n}}},$ and ${\bf{n}}\in \mathbb{Z}^{3}\backslash\{(0,0,0)\},$ the mapping
\begin{equation}
 1-K_{\zeta,{\bf{n}}}: \ L^{1}(\mathbb{R}^3)\rightarrow L^{1}(\mathbb{R}^3)\ \text{is onto.}
\end{equation}
\end{lemma}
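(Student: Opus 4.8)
The plan is to obtain surjectivity of $1-K_{\zeta,{\bf{n}}}$ from the Fredholm alternative, combining two facts established elsewhere in this section: the compactness of $K_{\zeta,{\bf{n}}}$ on $L^{1}(\mathbb{R}^{3})$ (Lemma ~\ref{LM:compactness}) and the uniform lower bound $\|(1-K_{\zeta,{\bf{n}}})g\|_{L^{1}}\geq C\|g\|_{L^{1}}$ proven in Lemma ~\ref{LM:invertibility}.

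First I would invoke Lemma ~\ref{LM:compactness}, according to which $K_{\zeta,{\bf{n}}}=K_0(\nu_0+i{\bf{n}}\cdot v-\zeta)^{-1}$ (see ~\eqref{eq:difKsn}) is a compact operator on $L^{1}(\mathbb{R}^{3})$ for every $\zeta\in\Omega_{{\bf{n}}}$ and every ${\bf{n}}\in\mathbb{Z}^{3}\backslash\{(0,0,0)\}$. Thus $1-K_{\zeta,{\bf{n}}}$ is a compact perturbation of the identity, and the classical Riesz--Schauder theory applies: an operator of the form $1-(\text{compact})$ on a Banach space is Fredholm of index zero, hence it is onto if and only if it is one-to-one.

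Next I would read off injectivity from Lemma ~\ref{LM:invertibility}: the estimate $\|(1-K_{\zeta,{\bf{n}}})g\|_{L^{1}}\geq C\|g\|_{L^{1}}$ with $C>0$ immediately forces $g=0$ whenever $(1-K_{\zeta,{\bf{n}}})g=0$, so the kernel of $1-K_{\zeta,{\bf{n}}}$ is trivial. Together with the previous step this yields that $1-K_{\zeta,{\bf{n}}}$ is onto, which is Lemma ~\ref{LM:surjective}. Moreover, the same lower bound shows that the range is closed and that $(1-K_{\zeta,{\bf{n}}})^{-1}$ is bounded by $C^{-1}$ uniformly in $\zeta$ and ${\bf{n}}$; combined with Lemma ~\ref{LM:invertibility} this completes the proof of Lemma ~\ref{LM:compactLm}, and hence, via ~\eqref{eq:rewriteLN}, of Lemma ~\ref{LM:distanceContour}.

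There is essentially no obstacle in this last step once compactness is granted; the real work sits in Lemma ~\ref{LM:compactness}, since compactness on $L^{1}$ is more delicate than on $L^{2}$ (one must produce genuine tightness at infinity, not merely local smoothing), which is why it is isolated as a separate lemma. If one prefers to bypass a direct appeal to Riesz--Schauder, an equivalent route is an analytic-Fredholm argument: the map $\zeta\mapsto K_{\zeta,{\bf{n}}}$ is analytic on $\Omega_{{\bf{n}}}$, and for $\zeta\in\Omega_{{\bf{n}}}$ with $|\zeta|$ large the operator norm $\|K_{\zeta,{\bf{n}}}\|_{L^{1}\to L^{1}}$ tends to $0$ (using ~\eqref{eq:ReasonCurve} together with the estimate on $K_0$ in Lemma ~\ref{LM:EstNonline}), so $1-K_{\zeta,{\bf{n}}}$ is boundedly invertible there by a Neumann series; analytic-Fredholm theory then extends invertibility throughout $\Omega_{{\bf{n}}}$, the would-be exceptional set being empty by the uniform lower bound of Lemma ~\ref{LM:invertibility}.
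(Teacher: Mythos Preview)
Your main argument is correct and is actually more economical than the paper's. Once Lemma~\ref{LM:compactness} gives compactness of $K_{\zeta,{\bf n}}$ on $L^{1}$, the Riesz--Schauder theorem applies verbatim (it holds in any Banach space), so $1-K_{\zeta,{\bf n}}$ is Fredholm of index zero and injectivity from Lemma~\ref{LM:invertibility} yields surjectivity immediately.

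The paper takes a different route: it runs a homotopy in the parameter $\delta\in[0,1]$ through the family $1-\delta K_{\zeta,{\bf n}}$. This requires an extra lemma (Lemma~\ref{LM:boundBelow}) asserting the lower bound $\|(1-\delta K_{\zeta,{\bf n}})g\|_{L^{1}}\geq C\|g\|_{L^{1}}$ uniformly in $\delta$, whose proof in turn needs to know that $-\nu_0-i{\bf n}\cdot v+\delta K_0$ has no imaginary eigenvalue for every $\delta\in[0,1]$. The paper then argues that the set of ``bad'' $\delta$ is closed, and that a minimal bad $\delta_0$ would produce a nontrivial kernel element (Lemma~\ref{LM:contradict}), contradicting Lemma~\ref{LM:boundBelow}. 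In effect the paper is reproving, by hand, the index-zero part of Riesz--Schauder in this particular situation. Your appeal to the abstract theorem bypasses Lemma~\ref{LM:boundBelow} entirely; the paper's approach, on the other hand, is more self-contained.

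One caution about your alternative analytic-Fredholm sketch: the claim that $\|K_{\zeta,{\bf n}}\|_{L^{1}\to L^{1}}\to 0$ as $|\zeta|\to\infty$ in $\Omega_{{\bf n}}$ is not correct as stated. The $L^{1}\to L^{1}$ norm of $K_{\zeta,{\bf n}}$ equals $\sup_{u}\bigl(\int r_0(v,u)\,dv\bigr)\,|\nu_0(u)+i{\bf n}\cdot u-\zeta|^{-1}$; since $\int r_0(v,u)\,dv$ grows like $1+|u|$ while $\nu_0(u)$ also grows like $1+|u|$, the ratio does not tend to zero uniformly in $u$ even for $\mathrm{Re}\,\zeta\to-\infty$. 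So a Neumann-series base point is not available in this way, and the analytic-Fredholm variant would need a different anchor. This does not affect your primary Riesz--Schauder argument, which stands on its own.
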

This lemma will be proven in Subsection ~\ref{subsec:onto2}.

Lemma ~\ref{LM:invertibility} implies that $1-K_{\zeta,{\bf{n}}}$ maps $L^{1}(\mathbb{R}^3)$ into a closed subset of $L^{1}(\mathbb{R}^3)$. This, together with the `onto-properties' in Lemma ~\ref{LM:surjective}, implies that it is invertible, and its inverse is uniformly bounded. Hence Lemma ~\ref{LM:compactLm} follows.

%%%%%%%%%%%%%%%%%%%%%%%%%%%%%%%%%%%%%%%
\subsection{Proof of Lemma ~\ref{LM:invertibility}}\label{subsec:onto}
In what follows we prove ~\eqref{eq:BdBelow} for $\zeta\in \Gamma_0$, the proofs for the other cases are similar.

It is enough to show that there exist constants $C$ and $\Theta$ independent of ${\bf{n}}$ such that, for any $\epsilon\in [0,\Theta]$, $h\in \mathbb{R}$ and ${\bf{n}}\in \mathbb{Z}^3\backslash\{(0,0,0)\},$ we have that
\begin{equation}
\|(1-K_{\epsilon+ih,{\bf{n}}})g\|_{L^{1}}\geq C\|g\|_{L^{1}}.
\end{equation}

Suppose that this inequality does not hold. Then there would exist a sequence $\{\epsilon_{m}\}_{m=1}^{\infty}\subset\mathbb{R}^{+},$ with $\displaystyle\lim_{m\rightarrow \infty}\epsilon_{m}=0$, a sequence $\{h_{m}\}_{m=1}^{\infty}\subset\mathbb{R}$, a sequence $\{g_{m}\}_{m=1}^{\infty}\subset L^{1}(\mathbb{R}^3),$ with $\|g_{m}\|_{L^{1}}=1,$ and a sequence $\{{\bf{n}}_{m}\}\subset \mathbb{Z}^{3}\backslash\{(0,0,0)\}$ such that
\begin{equation}\label{eq:assume}
 \|(1-K_{\epsilon_m+ih_{m},{\bf{n}}_{m}})g_{m}\|_{L^{1}}\rightarrow 0,\ \ \ \text{as}\ m\rightarrow \infty.
\end{equation}
By Lemma ~\ref{LM:compactness} below, the sequence $\{K_{\epsilon_m+ih_{m},{\bf{n}}_{m}}g_{m}\}_{m=1}^{\infty}$ contains a convergent subsequence. Without loss of generality we assume that $$\{K_{\epsilon_m+ih_{m},{\bf{n}}_{m}}g_{m}\}_{m=1}^{\infty}$$ is convergent, i.e. there exists a function $g_{\infty}\in L^{1}$ such that
\begin{equation}\label{eq:compactness}
 \|g_{\infty}+K_{\epsilon_m+ih_{m},{\bf{n}}_{m}}g_{m}\|_{L^{1}}\rightarrow 0,\ \ \text{as}\ m\rightarrow \infty.
\end{equation} This, together with ~\eqref{eq:assume}, implies that
\begin{equation}\label{eq:gInfty}
 \|g_{\infty}-g_{m}\|_{L^{1}}\rightarrow 0,\ \text{as}\ m\rightarrow \infty,\ \text{with}\ g_{\infty}\not=0.
\end{equation}

It is easy to see that the sequences $\{h_{m}\}_{m=1}^{\infty}$ and $\{{\bf{n}}_{m}\}_{m=1}^{\infty}$ are uniformly bounded. Otherwise, by the definition of $K_{\epsilon+ih,{\bf{n}}},$ it is easy to see that $K_{\epsilon+ih,{\bf{n}}}g_{\infty}\rightarrow 0$ as $|h|$ or $|{\bf{n}}|\rightarrow \infty$. This in turn contradicts ~\eqref{eq:assume}.

The bounded sequences $\{h_{m}\}_{m=1}^{\infty}$ and $\{{\bf{n}}_{m}\}_{m=1}^{\infty}$ must contain some convergent subsequences. Without loss of generality, we may assume that there exist a constant $h_{\infty}\in \mathbb{R}$ and ${\bf{n}}_{\infty}\not=(0,0,0)$ such that $h_{\infty}=\displaystyle\lim_{m\rightarrow \infty}h_{m}$ and ${\bf{n}}_{\infty}=\displaystyle\lim_{m\rightarrow \infty}{\bf{n}}_{m}.$ This, together with the definition of $K_{\epsilon+ih,{\bf{n}}}$, implies $$\|K_{\epsilon_m+ih_m,{\bf{n}}_{m}}-K_{ih_{\infty},{\bf{n}}_{\infty}}\|_{L^1\rightarrow L^1}\rightarrow 0,\ \ \text{as}\ m\rightarrow \infty.$$ Using ~\eqref{eq:compactness} and ~\eqref{eq:gInfty}, we conclude that
\begin{equation}\label{eq:noEigen}
 g_{\infty}-K_{ih_{\infty},{\bf{n}}_{\infty}}g_{\infty}=0.
\end{equation}

Recalling the definition of $K_{ih_\infty,{\bf{n}}_{\infty}},$ we find that $|g_{\infty}|\leq Ce^{-\frac{3}{4}|v|^2}$. This enables us to define a function $\tilde{g}_{\infty}\in L^{2}$ by $$\tilde{g}_{\infty}:=e^{\frac{1}{2}|v|^2}(-\nu_0-i{\bf{n}}_{\infty}\cdot v-ih_{\infty})g_{\infty}.$$
By ~\eqref{eq:noEigen}
\begin{equation}\label{eq:fakeState}
(-\nu_0-i{\bf{n}}_{\infty}\cdot v-ih_{\infty}+\tilde{K}_{0})\tilde{g}_{\infty}=0
\end{equation} with $\tilde{K}_{0}:=e^{\frac{1}{2}|v|^2}K_0 e^{-\frac{1}{2}|v|^2}.$

On the other hand, in Lemma ~\ref{LM:lowestEi} below, we prove that $0$ is a simple and the lowest eigenvalue of the self-adjoint operator $-\nu_0+\tilde{K}_{0}: \ L^2\rightarrow L^2,$ with eigenvector $e^{-\frac{1}{2}|v|^2}$. This implies that $\langle g, \ (-\nu_0+\tilde{K}_0)g\rangle=Re\langle g, \ (-\nu_0-i{\bf{n}}_{\infty}\cdot v-ih_{\infty}+\tilde{K}_0)g\rangle=0$ only holds if $g$ is parallel to $e^{-\frac{1}{2}|v|^2}$. By direct computation we find that ~\eqref{eq:fakeState} can not hold if ${\bf{n}}\not=(0,0,0)$, and this completes our proof of Lemma ~\ref{LM:invertibility}.
\begin{flushright}
$\square$
\end{flushright}
The following result has been used in the proof.
\begin{lemma}\label{LM:compactness}
For a sequence $\{g_{m}\}_{m=1}^{\infty}\subset L^{1}(\mathbb{R}^3)$ satisfying $\|g_{m}\|_{L^{1}}\leq 1,$ there exists a subsequence $\{\tilde{g}_{m}\}_{m=1}^{\infty}$ such that $K_{\epsilon+ih, {\bf{n}}}\tilde{g}_{m}$ is convergent in $L^{1}(\mathbb{R}^3),$ i.e. there exists a function $\tilde{g}_{\infty}\in L^{1}(\mathbb{R}^3)$ such that
\begin{equation}
\|\tilde{g}_{\infty}-K_{\epsilon+ih, {\bf{n}}}\tilde{g}_{m}\|_{L^{1}}=0,\ \  \text{as}\ m\rightarrow \infty.
\end{equation}
\end{lemma}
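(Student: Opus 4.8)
The plan is to recognize $K_{\epsilon+ih,{\bf{n}}}$ as an integral operator whose kernel is dominated by a Gaussian in its output variable, and to verify the Kolmogorov--Riesz--Fr\'echet criterion for relative compactness in $L^{1}(\mathbb{R}^{3})$. Write $K_{\epsilon+ih,{\bf{n}}}g(v)=\int_{\mathbb{R}^{3}}r_{0}(v,u)\,m(u)\,g(u)\,d^{3}u$ with $m(u):=(\nu_{0}(u)+i{\bf{n}}\cdot u-\epsilon-ih)^{-1}$; the same computation that underlies \eqref{eq:ReasonCurve} gives $|m(u)|\le C(1+|u|)^{-1}$ with $C$ \emph{independent} of $\epsilon\in[0,\Theta]$, $h\in\mathbb{R}$ and ${\bf{n}}$, while the pointwise bounds on $r_{0}$ in item~(2) give $r_{0}(v,u)\le Ce^{-|v|^{2}}(1+|v-u|^{2})^{1/2}\le Ce^{-|v|^{2}}(1+|v|+|u|)$. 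Let $\mathcal{F}:=\{K_{\epsilon+ih,{\bf{n}}}g:\ \|g\|_{L^{1}}\le 1\}$. Since every estimate below is uniform in $(\epsilon,h,{\bf{n}})$, the argument will in fact show that the union of these sets over all admissible parameters is relatively compact, which is the version actually invoked in Subsection~\ref{subsec:onto}.

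First I would check boundedness and uniform tightness. Integrating the kernel bound in the output variable, $\int_{\mathbb{R}^{3}}r_{0}(v,u)\,d^{3}v\le C(1+|u|)$, so $\sup_{u}|m(u)|\int_{\mathbb{R}^{3}}r_{0}(v,u)\,d^{3}v\le C$ and $K_{\epsilon+ih,{\bf{n}}}$ is bounded on $L^{1}$ with norm at most $C$, hence $\sup_{f\in\mathcal{F}}\|f\|_{L^{1}}\le C$. For tightness, Fubini gives
\[
\int_{|v|>R}|K_{\epsilon+ih,{\bf{n}}}g(v)|\,d^{3}v\le\int_{\mathbb{R}^{3}}|m(u)|\,|g(u)|\Big(\int_{|v|>R}r_{0}(v,u)\,d^{3}v\Big)d^{3}u,
\]
and $\int_{|v|>R}r_{0}(v,u)\,d^{3}v\le C(1+|u|)\varrho(R)$ with $\varrho(R):=\int_{|v|>R}e^{-|v|^{2}}(1+|v|)\,d^{3}v\to0$; since $|m(u)|(1+|u|)\le C$, the right-hand side is $\le C\varrho(R)\|g\|_{L^{1}}\le C\varrho(R)\to0$, uniformly over $\mathcal{F}$ and over the parameters.

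Next I would establish $L^{1}$-equicontinuity under translation. For $|\eta|\le 1$ one has $K_{\epsilon+ih,{\bf{n}}}g(v+\eta)-K_{\epsilon+ih,{\bf{n}}}g(v)=\int_{\mathbb{R}^{3}}(r_{0}(v+\eta,u)-r_{0}(v,u))\,m(u)\,g(u)\,d^{3}u$, and the derivative bounds on $r_{0}$ in item~(2), applied to the derivative in the first slot, give $|\nabla_{v}r_{0}(v,u)|\le Ce^{-|v|^{2}/2}(1+|v-u|^{2})^{1/2}$, whence $|r_{0}(v+\eta,u)-r_{0}(v,u)|\le C|\eta|\,e^{-c|v|^{2}}(1+|v|+|u|)$ for a fixed $c>0$; integrating in $v$ yields $\int_{\mathbb{R}^{3}}|r_{0}(v+\eta,u)-r_{0}(v,u)|\,d^{3}v\le C|\eta|(1+|u|)$, and therefore $\|K_{\epsilon+ih,{\bf{n}}}g(\cdot+\eta)-K_{\epsilon+ih,{\bf{n}}}g\|_{L^{1}}\le C|\eta|\int_{\mathbb{R}^{3}}|m(u)|(1+|u|)|g(u)|\,d^{3}u\le C|\eta|$, uniformly over $\mathcal{F}$. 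Having verified boundedness, uniform tightness and uniform translation-equicontinuity, the Kolmogorov--Riesz--Fr\'echet theorem shows $\mathcal{F}$ is relatively compact in $L^{1}(\mathbb{R}^{3})$; extracting a convergent subsequence of $\{K_{\epsilon+ih,{\bf{n}}}g_{m}\}_{m\ge1}$ then completes the proof.

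The computations in each of the three steps are elementary; the point requiring care — and the only reason the lemma as literally stated suffices for the varying-parameter sequences $\{K_{\epsilon_{m}+ih_{m},{\bf{n}}_{m}}g_{m}\}$ used in Subsection~\ref{subsec:onto} — is the uniformity of all constants in ${\bf{n}}$ and $h$. This uniformity rests entirely on the uniform bound \eqref{eq:ReasonCurve} for the multiplier $m$ on $\Omega_{{\bf{n}}}$ together with the Gaussian factor $e^{-|v|^{2}}$ in $r_{0}(v,u)$, which absorbs both the polynomial growth $(1+|v-u|^{2})^{1/2}$ and the shift $v\mapsto v+\eta$ after integration over the output variable.
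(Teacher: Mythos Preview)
Your proof is correct and follows essentially the same approach as the paper: the paper invokes an ``Ascoli-type'' argument, observing equicontinuity of the sequence $\{K_{\epsilon+ih,{\bf n}}g_{m}\}$ and its ``almost compact support'' via the weight $e^{\frac{1}{2}|v|^{2}}$, which is exactly the Kolmogorov--Riesz--Fr\'echet criterion you verify in detail. Your explicit observation that all constants are uniform in $(\epsilon,h,{\bf n})$---so that the argument covers the varying-parameter sequences actually used in Subsection~\ref{subsec:onto}---is a point the paper leaves implicit, and your write-up makes this clearer.
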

\begin{proof}
This result is a simple generalization of Ascoli's Theorem in ~\cite{RSI} which asserts compactness of any sequence of equi-continuous $L^1$ functions defined in a bounded domain. In the present situation we observe that
\begin{itemize}
\item[(1)]
the sequence of functions $\{K_{\epsilon+ih, {\bf{n}}}\tilde{g}_{m}\}_{m=1}^{\infty}$ is equicontinuous;
\item[(2)] these functions are ``almost compactly supported," in the sense that the functions $e^{\frac{1}{2}|v|^2}K_{\epsilon+ih, {\bf{n}}}\tilde{g}_{m}$, $m=1,2,\cdots,$ are in $L^{1}(\mathbb{R}^3)$ and their norms are uniformly bounded.
\end{itemize}
\end{proof}
%%%%%%%%%%%%%%%%%%%%%%%%%%%%%%%%%%%%%%%%%%%%%%%%%%%%%%%%%%%%%%%%%%%%%%%%%%%%%%%%%%%%
\subsection{Proof of Lemma ~\ref{LM:surjective}}\label{subsec:onto2}
To simplify notation we denote $K_{\zeta,{\bf{n}}}$ by $\Phi$, i.e., $$\Phi= K_{\zeta,{\bf{n}}}.$$ This will not cause confusion, because $\zeta$ and ${\bf{n}}$ are fixed in the present subsection.

We start by considering a family of operators $\{1-\delta \Phi|\  \delta\in [0,1]\}.$ The first result is
\begin{lemma}\label{LM:boundBelow}
The operator $1-\delta \Phi$ is bounded there exists a constant $C$ independent of $\delta$ such that
\begin{equation}
 \|1-\delta \Phi\|_{L^{1}\rightarrow L^{1}}\geq C.
\end{equation}
\end{lemma}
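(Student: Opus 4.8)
The boundedness of $1-\delta\Phi$ is soft, and the real content is the uniform lower bound; the plan is to split the latter into an easy endpoint reduction and a spectral core. For boundedness, write $\Phi=K_{\zeta,{\bf n}}=K_0\circ M$ with $M$ the operator of multiplication by $(\nu_0+i{\bf n}\cdot v-\zeta)^{-1}$: estimate \eqref{eq:ReasonCurve} gives $|M(v)|\lesssim\langle v\rangle^{-1}$, hence $\|\langle v\rangle Mf\|_{L^1}\lesssim\|f\|_{L^1}$, and then \eqref{eq:estK0} with $m=0$ yields $\|\Phi f\|_{L^1}\lesssim\|f\|_{L^1}$ with a constant independent of $\delta$, $\zeta$ and ${\bf n}$, so $\|1-\delta\Phi\|_{L^1\to L^1}\le 1+\|\Phi\|_{L^1\to L^1}$ uniformly. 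For the lower bound I would first dispose of $\delta$ near the endpoints of $[0,1]$: if $\delta\|\Phi\|_{L^1\to L^1}\le\frac{1}{2}$ then $\|(1-\delta\Phi)g\|_{L^1}\ge\frac{1}{2}\|g\|_{L^1}$ trivially, and if $\delta$ is close to $1$, writing $1-\delta\Phi=(1-\Phi)+(1-\delta)\Phi$ and using the bound $\|(1-\Phi)g\|_{L^1}\ge C\|g\|_{L^1}$ of Lemma~\ref{LM:invertibility} gives $\|(1-\delta\Phi)g\|_{L^1}\ge(C-(1-\delta)\|\Phi\|_{L^1\to L^1})\|g\|_{L^1}\ge\frac{C}{2}\|g\|_{L^1}$ as soon as $(1-\delta)\|\Phi\|_{L^1\to L^1}\le\frac{C}{2}$. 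This reduces the task to a uniform lower bound for $\delta$ in a fixed compact interval $[\delta_0,\delta_1]\subset(0,1)$.

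On that middle range I would argue by contradiction, running the proof of Lemma~\ref{LM:invertibility} with the parameter $\delta$ carried along. Suppose the bound fails; choose $\delta_m\in[\delta_0,\delta_1]$ and $g_m$ with $\|g_m\|_{L^1}=1$ and $\|(1-\delta_m\Phi)g_m\|_{L^1}\to0$. By compactness of $\Phi$ (Lemma~\ref{LM:compactness}) and boundedness of $\{\delta_m\}$, along a subsequence $\delta_m\Phi g_m\to g_\infty$ in $L^1$ and $\delta_m\to\delta_\infty\in[\delta_0,\delta_1]$, so $g_m\to g_\infty$, $\|g_\infty\|_{L^1}=1$, and $g_\infty=\delta_\infty\Phi g_\infty$. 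The Gaussian factor in the kernel of $K_0$ (see \eqref{eq:difK0}) forces $|g_\infty|\lesssim e^{-\frac{3}{4}|v|^2}$, so $\tilde g:=e^{\frac{1}{2}|v|^2}(\nu_0+i{\bf n}\cdot v-\zeta)^{-1}g_\infty$ is a nonzero element of $L^2(\mathbb{R}^3)$ solving $(\nu_0-\delta_\infty\tilde K_0+i{\bf n}\cdot v-\zeta)\tilde g=0$, with $\tilde K_0:=e^{\frac{1}{2}|v|^2}K_0 e^{-\frac{1}{2}|v|^2}$ self-adjoint on $L^2$ thanks to the detailed balance relation \eqref{eq:DetailBal}. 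Since $\delta_\infty\le\delta_1<1$, $\nu_0-\tilde K_0\ge0$ (Lemma~\ref{LM:lowestEi}) and $\nu_0\ge\Lambda\langle v\rangle$ (by \eqref{eq:globalLower}), the self-adjoint operator $\nu_0-\delta_\infty\tilde K_0=(1-\delta_\infty)\nu_0+\delta_\infty(\nu_0-\tilde K_0)$ is bounded below by the fixed constant $(1-\delta_1)\Lambda>0$. Taking real and imaginary parts of the pairing of the equation with $\tilde g$ gives $\mathrm{Re}\,\zeta\ge(1-\delta_1)\Lambda$ and $|\mathrm{Im}\,\zeta|\lesssim|{\bf n}|(1-\delta_1)^{-1}\mathrm{Re}\,\zeta$ — the same bookkeeping as in the proof of Lemma~\ref{LM:distanceContour} — whereas, once $\Theta$ is chosen small relative to $(1-\delta_1)\Lambda$, a point $\zeta\in\Omega_{\bf n}$ with $\mathrm{Re}\,\zeta\ge(1-\delta_1)\Lambda>\Theta$ must lie beyond $\Gamma_2({\bf n})\cup\Gamma_3({\bf n})$ and hence satisfies $|\mathrm{Im}\,\zeta|\gtrsim(|{\bf n}|+1)\Psi\,\mathrm{Re}\,\zeta$. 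Comparing the two bounds for $|\mathrm{Im}\,\zeta|$ forces $\Psi\lesssim\Lambda^{-1}(1-\delta_1)^{-1}$, contradicting the choice of $\Psi$ large. Hence the uniform lower bound holds on $[\delta_0,\delta_1]$, and with the endpoint estimates, on all of $[0,1]$.

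The hard part will be this spectral step on the middle range: for $\delta<1$ the function $e^{-\frac{1}{2}|v|^2}$ is no longer an eigenvector of $\nu_0-\delta\tilde K_0$, so one cannot read off injectivity from the simple-eigenvalue structure of Lemma~\ref{LM:lowestEi} the way Lemma~\ref{LM:invertibility} does; one must instead exploit the uniform strict positivity of $\nu_0-\delta\tilde K_0$ available on $[\delta_0,\delta_1]$ — precisely the reason for performing the endpoint reduction first — together with the narrowness of $\Omega_{\bf n}$ around the imaginary axis and the steepness of the curves $\Gamma_2({\bf n})$, $\Gamma_3({\bf n})$. A secondary point to keep straight is the order of constants: $\delta_1$ should be fixed in terms of $\|\Phi\|_{L^1\to L^1}$ and the constant of Lemma~\ref{LM:invertibility}, both of which are uniformly bounded over all admissible $\Theta$ and $\frac{1}{\Psi}$, so that the further smallness of $\Theta$ and $\frac{1}{\Psi}$ required above remains consistent with the standing hypotheses of Lemmas~\ref{LM:distanceContour} and \ref{LM:compactLm}.
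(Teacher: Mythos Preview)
Your compactness-plus-eigenvalue-equation argument is essentially the same mechanism the paper uses, but you have over-engineered it in two respects, and it is worth seeing why the paper's version is shorter.

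First, in this lemma $\zeta\in\Omega_{\bf n}$ and ${\bf n}\neq 0$ are \emph{fixed}; only uniformity in $\delta\in[0,1]$ is being asserted. Your entire last paragraph about the order in which $\Theta$, $\Psi$, $\delta_1$ are chosen is therefore beside the point: for a fixed $\zeta$ the constant $C$ may well depend on $\zeta$ and ${\bf n}$, and no circularity with Lemmas~\ref{LM:distanceContour}--\ref{LM:compactLm} arises. (The uniformity in $\zeta,{\bf n}$ was already handled once and for all in Lemma~\ref{LM:invertibility}; the present lemma is only a tool inside the surjectivity argument.)

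Second, the paper avoids your endpoint reduction entirely. Its one-line observation is that the operator $-\nu_0-i{\bf n}\cdot v+\delta K_0$ has no purely imaginary (or zero) eigenvalue for \emph{any} $\delta\in[0,1]$: after conjugating to $L^2$ one has $\nu_0-\delta\tilde K_0=(1-\delta)\nu_0+\delta(\nu_0-\tilde K_0)\ge(1-\delta)\Lambda$, so for $\delta<1$ the real part of the quadratic form is strictly positive and no such eigenvalue exists, while for $\delta=1$ the argument is exactly that of Lemma~\ref{LM:invertibility} via Lemma~\ref{LM:lowestEi} and the condition ${\bf n}\neq 0$. With this observation in hand the paper simply reruns the compactness/contradiction scheme of Lemma~\ref{LM:invertibility} with the extra parameter $\delta_m\in[0,1]$ carried along and extracted to some $\delta_\infty\in[0,1]$; no splitting into ``near $0$ / near $1$ / middle'' is needed, and no appeal to the already-proved case $\delta=1$ is required. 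Your approach is not wrong, but the detour through Lemma~\ref{LM:invertibility} near $\delta=1$ and the restriction to $[\delta_0,\delta_1]$ buy you nothing that the uniform spectral observation does not already give.
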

\begin{proof}
The important observation is that the operator $-\nu_0-i{\bf{n}}\cdot v+\delta K_{0}$ does not have any purely imaginary or $0$ eigenvalues when $\delta\in [0,1].$ The completion of the proof is similar to the proof of Lemma ~\ref{LM:invertibility}.
\end{proof}

Lemma ~\ref{LM:boundBelow} implies that $1-\delta \Phi$ maps any closed set to a closed set. We
define a set $\Delta\subset [0,1]$ by $$\Delta:=\{\delta\in [0,1]|1-\delta \Phi\ \text{is not onto} \}.$$ We claim that $\Delta$ is empty. If the claim holds then it obviously implies Lemma ~\ref{LM:surjective}.

We give an indirect proof of this claim. Suppose the claim is false. Then we define $\delta_0\in [0,1]$ by
$$\delta_0=\inf\{\delta|\ \delta\in \Delta\}.$$
\begin{lemma}\label{LM:contradict}
There exists a non-zero function $g_{0}\in L^{1}$ such that $$(1-\delta_{0}\Phi) g_{0}=0.$$
\end{lemma}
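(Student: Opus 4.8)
The plan is to exploit the compactness of $\Phi=K_{\zeta,{\bf{n}}}$ on $L^{1}(\mathbb{R}^{3})$ — established by the Ascoli-type argument of Lemma~\ref{LM:compactness}, which applies for every $\zeta\in\Omega_{{\bf{n}}}$ and ${\bf{n}}\neq(0,0,0)$ — together with the uniform lower bound of Lemma~\ref{LM:boundBelow}, by means of a limiting argument as $\delta\uparrow\delta_{0}$. Two preliminary observations are needed. First, $\delta_{0}>0$: for $\delta<1/\|\Phi\|_{L^{1}\rightarrow L^{1}}$ the operator $1-\delta\Phi$ is invertible by a Neumann series, hence onto, so such $\delta$ do not lie in $\Delta$, and therefore $\inf\Delta>0$. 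Second, $1-\delta_{0}\Phi$ is \emph{not} onto: were it onto, then by Lemma~\ref{LM:boundBelow} it would be injective with closed range, hence boundedly invertible with $\|(1-\delta_{0}\Phi)^{-1}\|_{L^{1}\rightarrow L^{1}}\leq 1/C$; writing $1-\delta\Phi=(1-\delta_{0}\Phi)\bigl[1-(\delta-\delta_{0})(1-\delta_{0}\Phi)^{-1}\Phi\bigr]$ and expanding the bracket by a Neumann series for $|\delta-\delta_{0}|$ small would force $1-\delta\Phi$ to be onto on a whole neighbourhood of $\delta_{0}$, contradicting $\delta_{0}=\inf\Delta$.

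Next I would fix a strictly increasing sequence $\delta_{j}\uparrow\delta_{0}$ with $\delta_{j}<\delta_{0}$. By definition of $\delta_{0}$ each $1-\delta_{j}\Phi$ is onto, and by Lemma~\ref{LM:boundBelow} it is injective, hence boundedly invertible with $\|(1-\delta_{j}\Phi)^{-1}\|_{L^{1}\rightarrow L^{1}}\leq 1/C$ uniformly in $j$. Choosing, by the second observation, a function $y_{0}\in L^{1}(\mathbb{R}^{3})$ outside the range of $1-\delta_{0}\Phi$ and setting $x_{j}:=(1-\delta_{j}\Phi)^{-1}y_{0}$, so that $x_{j}=y_{0}+\delta_{j}\Phi x_{j}$, I claim $\|x_{j}\|_{L^{1}}\to\infty$. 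Indeed, if some subsequence of $\{x_{j}\}$ were bounded, compactness of $\Phi$ would allow passage to a further subsequence along which $\Phi x_{j}$ converges; then $x_{j}=y_{0}+\delta_{j}\Phi x_{j}$ would converge to some $x_{\infty}$, continuity of $\Phi$ would give $\Phi x_{j}\to\Phi x_{\infty}$, and passing to the limit in the identity would yield $(1-\delta_{0}\Phi)x_{\infty}=y_{0}$, contradicting the choice of $y_{0}$.

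Finally, set $w_{j}:=x_{j}/\|x_{j}\|_{L^{1}}$, so that $\|w_{j}\|_{L^{1}}=1$ and $(1-\delta_{j}\Phi)w_{j}=y_{0}/\|x_{j}\|_{L^{1}}\to 0$. Using compactness of $\Phi$ once more, pass to a subsequence along which $\Phi w_{j}$ converges; then $w_{j}=(1-\delta_{j}\Phi)w_{j}+\delta_{j}\Phi w_{j}$ converges to a limit $g_{0}$ with $\|g_{0}\|_{L^{1}}=1$, and $\Phi w_{j}\to\Phi g_{0}$ by continuity. Passing to the limit in $(1-\delta_{0}\Phi)w_{j}=(1-\delta_{j}\Phi)w_{j}+(\delta_{j}-\delta_{0})\Phi w_{j}$, and noting $\|(\delta_{j}-\delta_{0})\Phi w_{j}\|_{L^{1}}\leq|\delta_{j}-\delta_{0}|\,\|\Phi\|_{L^{1}\rightarrow L^{1}}\to 0$, we obtain $(1-\delta_{0}\Phi)g_{0}=0$ with $g_{0}\neq 0$, which is the assertion of the lemma. (Alternatively, once $1-\delta_{0}\Phi$ is known not to be onto, one may simply invoke the Riesz--Schauder theory for the compact operator $\delta_{0}\Phi$ on the Banach space $L^{1}(\mathbb{R}^{3})$: a compact perturbation of the identity is onto if and only if it is injective, so failure of surjectivity immediately produces a nonzero kernel element.)

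The step I expect to be the main obstacle is the second preliminary observation, that $1-\delta_{0}\Phi$ itself fails to be onto: the definition of $\delta_{0}$ only controls $\delta<\delta_{0}$ directly, and it is precisely the \emph{uniformity in $\delta$} of the lower bound of Lemma~\ref{LM:boundBelow} — equivalently, the uniform boundedness of the inverses $(1-\delta_{j}\Phi)^{-1}$ — that lets one push past $\delta_{0}$ and close the argument. Compactness of $\Phi$, already available from Lemma~\ref{LM:compactness}, then does the remaining work of converting the approximate solutions $w_{j}$ of $(1-\delta_{0}\Phi)g=0$ into a genuine nonzero solution.
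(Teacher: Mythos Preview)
Your approach is essentially the paper's: show that $\delta_0\in\Delta$ (the paper phrases this as ``$\Delta$ is closed''), pick $y_0$ outside the range of $1-\delta_0\Phi$, set $x_j=(1-\delta_j\Phi)^{-1}y_0$ for $\delta_j\uparrow\delta_0$, use compactness of $\Phi$ to force $\|x_j\|\to\infty$, normalize, and extract a nonzero kernel element by a second compactness pass; you simply spell out details the paper leaves implicit. One cosmetic wrinkle: the uniform inverse bound $\|(1-\delta_j\Phi)^{-1}\|\le 1/C$ you record just before claiming $\|x_j\|\to\infty$ would by itself give $\|x_j\|\le (1/C)\|y_0\|$, so strictly speaking you have already reached a contradiction at that line --- harmless inside an indirect proof, but that bound plays no further role and the paper does not invoke it there. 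Your Riesz--Schauder alternative is also correct and is the shortest route.
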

Obviously this contradicts Lemma ~\ref{LM:boundBelow}. \\
{\bf{Proof of Lemma ~\ref{LM:contradict}}}\\

We observe that $\delta_{0}\not=0,$ because the operator $\Phi$ is bounded.

Another observation is that the set $\Delta$ is closed: By Lemma ~\ref{LM:boundBelow}, the statement that $1-\delta \Phi$ is onto is equivalent to the statement that $1-\delta \Phi$ is invertible, and a classical result says that $\{\delta| 1-\delta \Phi\ \text{is invertible}\}$ is an open set.

Since $\Delta$ is closed, $\delta_0\in \Delta.$ Let $g_0\in L^{1}$ be a vector satisfying
\begin{equation}
g_{0}\not\in Range(1-\delta_{0}\Phi).
\end{equation}

Take a sequence $\{\epsilon_{n}\}_{n=0}^{\infty}\subset [0,\delta_0)$ satisfying $\displaystyle \lim_{n\rightarrow \infty}\epsilon_{n}=\delta_{0}$. By the definition, the maps $1-\epsilon_{n}\Phi$ are onto. This enables us to define a sequence of functions $\{g_{n}\}_{n=0}^{\infty}\subset L^{1}$ by setting $$g_{n}:=(1-\epsilon_{n}\Phi)^{-1}g_0.$$ The compactness of the operator $\Phi$ then implies that $$\|g_{n}\|_{L^{1}}\rightarrow \infty,\ \text{as}\ n\rightarrow \infty.$$

We set $\xi_{n}:=\frac{g_{n}}{\|g_{n}\|_{L^{1}}}.$ Then $$(1-\epsilon_{n}\Phi)\xi_{n}\rightarrow 0,\ \text{as}\ n\rightarrow \infty.$$ The fact that $\Phi$ is compact, together with arguments almost identical to those in proving ~\eqref{eq:noEigen}, then implies there exists a non-trivial function $g_{\infty}\in L^1$ such that $$(1-\delta_0\Phi)g_{\infty}=0.$$ This is Lemma ~\ref{LM:contradict}.
\begin{flushright}
$\square$
\end{flushright}
%%%%%%%%%%%%%%%%%%%%%%%%%%%%%%%%%%%%%%
\subsection{Simplicity of the Eigenvalue $0$}
The following result has been used in the proof of Lemma ~\ref{LM:invertibility}. Denote the operator $e^{\frac{1}{2}|v|^2}K_0 e^{-\frac{1}{2}|v|^2}:\ L^{2}(\mathbb{R}^3)\rightarrow L^{2}(\mathbb{R}^{3})$ by $\tilde{K}_0.$ By the definition of $K_{0}$ in ~\eqref{eq:difK0} and the assumption on $r_0$ in ~\eqref{eq:NLBL1} it is easy to see that it is compact, self-adjoint and has a positive kernel.
\begin{lemma}\label{LM:lowestEi}
The linear self-adjoint unbounded operators $-\nu_{0}+\tilde{K}_0$, mapping $L^{2}(\mathbb{R}^3)$ to $L^{2}(\mathbb{R}^3),$ have the following properties
\begin{itemize}
\item[(A)] $0$ is a simple eigenvalue with eigenvector $e^{-\frac{1}{2}|v|^2};$
\item[(B)] there exists a constant $C>0$ such that if $g\in L^{2}(\mathbb{R}^3)$ is orthogonal to $ e^{-\frac{1}{2}|v|^2}$ then
    \begin{equation}
    \langle g,\ (-\nu_{0}+\tilde{K}_0)g\rangle \leq -C\|g\|_{2}^2.
    \end{equation}
\end{itemize}
\end{lemma}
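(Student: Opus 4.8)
The plan is to represent the quadratic form of $H:=-\nu_0+\tilde K_0$ as a Dirichlet form whose null space is exactly the span of the Maxwellian, and then to combine this with Weyl's theorem in order to upgrade the resulting non-strict inequality to the uniform spectral gap in (B).

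First I would extract two consequences of the detailed balance relation \eqref{eq:DetailBal} (recall $\beta m=2$). Setting $b(v,u):=r_0(v,u)e^{-|u|^2}$, detailed balance gives $b(u,v)=r_0(u,v)e^{-|v|^2}=r_0(v,u)e^{|v|^2-|u|^2}e^{-|v|^2}=b(v,u)$, so $b$ is symmetric, and it is strictly positive everywhere by the lower bound $r_0(u,v)\ge C^{-1}e^{-|u|^2}(1+|u-v|^2)^{1/2}$. Moreover, \eqref{eq:difNu0} together with detailed balance yields the identity $\nu_0(v)e^{-|v|^2}=\int_{\mathbb{R}^3} b(v,u)\,du$. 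Using these, for any $g$ with $\int_{\mathbb{R}^3}\nu_0|g|^2\,dv<\infty$ (a set containing $e^{-|v|^2/2}$, since $\nu_0(v)\lesssim 1+|v|$ by the upper bound on $r_0$) I would substitute $g(v)=e^{-|v|^2/2}w(v)$ and compute directly that both the multiplication part and the $\tilde K_0$ part collapse onto $b$, giving
$$\langle g,\,Hg\rangle=-\frac{1}{2}\int_{\mathbb{R}^3}\int_{\mathbb{R}^3} b(v,u)\,|w(v)-w(u)|^2\,du\,dv\ \le\ 0,$$
with equality if and only if $w$ is constant, i.e. $g\in\mathbb{C}\,e^{-|v|^2/2}$. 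In particular a direct check (or the identity above) shows $He^{-|v|^2/2}=0$, so $0\in\sigma(H)$; and the quadratic-form identity shows $\sup\sigma(H)=0$ and $\ker H=\mathbb{C}\,e^{-|v|^2/2}$. Since $H$ is self-adjoint, this is exactly part (A).

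For part (B) I would use that $\tilde K_0$ is compact on $L^2(\mathbb{R}^3)$ (as noted before the lemma) and that $-\nu_0\le-\Lambda<0$ by \eqref{eq:globalLower}. Weyl's theorem then gives $\sigma_{ess}(H)=\sigma_{ess}(-\nu_0)\subseteq(-\infty,-\Lambda]$, so $0$ is an isolated eigenvalue of finite multiplicity; together with $\sigma(H)\subseteq(-\infty,0]$ and $\dim\ker H=1$ from part (A), this forces $\sigma(H)\setminus\{0\}\subseteq(-\infty,-C]$ for some $C>0$. The orthogonal complement of $e^{-|v|^2/2}$ in $L^2$ coincides with the range of the spectral projection $\mathbf{1}_{(-\infty,-C]}(H)$, so the spectral theorem yields $\langle g,Hg\rangle\le-C\|g\|_2^2$ for every such $g$ (in the form domain of $H$), which is (B).

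The detailed-balance identities and the quadratic-form algebra are routine bookkeeping; the one genuinely necessary analytic input, and the place where I expect to have to be careful, is the passage from ``the Dirichlet form vanishes only on the Maxwellian'' to the uniform gap in (B): a strictly-negative bound valid on the whole orthogonal complement does not follow from positivity alone and really requires locating the essential spectrum strictly below $0$, which is precisely where compactness of $\tilde K_0$ enters.
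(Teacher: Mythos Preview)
Your argument is correct and complete, but it follows a genuinely different path from the paper's proof. You exploit the detailed-balance structure directly: the ground-state substitution $g=e^{-|v|^2/2}w$ turns the quadratic form of $H$ into the Dirichlet form $-\tfrac12\iint b\,|w(v)-w(u)|^2$, which is manifestly nonpositive and vanishes only on constants; Weyl's theorem then isolates $0$ from the rest of the spectrum and yields the uniform gap. The paper instead runs a Perron--Frobenius argument: it passes to the symmetrized compact operator $(\nu_0+C_0)^{-1/2}\tilde K_0(\nu_0+C_0)^{-1/2}$, uses strict positivity of its kernel to show that any minimizer of the Rayleigh quotient can be taken positive and is therefore unique, and then identifies it with $e^{-|v|^2/2}$; the gap in (B) is extracted from the strict inequality on the orthogonal complement together with the min--max principle. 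Your route is shorter and makes the mechanism (detailed balance $\Rightarrow$ Dirichlet form) more transparent, at the cost of requiring the ground state to be known explicitly from the start; the paper's route is the one that generalizes to situations where the top eigenvector is not given a priori but only the positivity of the kernel is available.
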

\begin{proof}
The general idea in the proof is not new. It is similar to the proof of existence, uniqueness and positivity of ground states of Schr\"odinger operators; see \cite{Mcowen}.

Define $C_0$ as
\begin{equation}\label{eq:mini}
-C_0:=\inf \frac{\langle g,\ (\nu_{0}-\tilde{K}_0)g\rangle}{\langle g,\ g\rangle}.
\end{equation} The fact $(-\nu_0+\tilde{K}_0) e^{-\frac{1}{2}|v|^2}=0$ implies that $C_0\geq 0.$

By a series of transformations we find
\begin{equation}\label{eq:minimizer}
\begin{array}{lll}
0&=&\displaystyle\inf\frac{\langle g,\ (\nu_{0}+C_0-\tilde{K}_0)g\rangle}{\langle g,(\nu_{0}+C_0)g\rangle}\\
& &\\
&=&\displaystyle\inf\frac{\langle f,\ 1- (\nu_{0}+C_0)^{-\frac{1}{2}} \tilde{K}_0 (\nu_{0}+C_0)^{-\frac{1}{2}}f\rangle}{\langle f,\ f\rangle}.
\end{array}
\end{equation}
The key observation is that the operator $(\nu_{0}+C_0)^{-\frac{1}{2}} \tilde{K}_0 (\nu_{0}+C_0)^{-\frac{1}{2}}: \ L^{2}(\mathbb{R}^3)\rightarrow L^{2}(\mathbb{R}^3)$ is self-adjoint and compact, hence ~\eqref{eq:minimizer} has minimizers and they form a finite dimensional linear space. Suppose they are spanned by $\{\xi_{n}\}_{n=1}^{N} \subset L^{2}(\mathbb{R}^3)$, then each of them satisfies the equation
\begin{equation}\label{eq:secondEig}
[1-(\nu_{0}+C_0)^{-\frac{1}{2}} \tilde{K}_0 (\nu_{0}+C_0)^{-\frac{1}{2}}]\xi_n=0.
\end{equation} Moreover $$\displaystyle\inf_{f\perp \xi_{n},\ n=1,\cdots, N}\frac{\langle f,\ 1- (\nu_{0}+C_0)^{-\frac{1}{2}} \tilde{K}_0 (\nu_{0}+C_0)^{-\frac{1}{2}}f\rangle}{\langle f,\ f\rangle}>0,$$ hence, by defining $g=(\nu_0+C_0)^{-\frac{1}{2}}f,$ we obtain
\begin{equation}\label{eq:lowerBound}
\displaystyle\inf_{g\perp (\nu+C_{0})^{\frac{1}{2}}\xi_{n},\ n=1,\cdots,N}\frac{\langle g,\ (\nu_{0}+C_0-\tilde{K}_0)g\rangle}{\langle g,(\nu_{0}+C_0)g\rangle}>0.
\end{equation}

In the next we prove the minimizer is unique. Since the operator $(\nu_{0}+C_0)^{-\frac{1}{2}} \mathcal{K}_0 (\nu_{0}+C_0)^{-\frac{1}{2}}$ is compact and its integral kernel is strictly positive, we find that $$\langle |f|,\ 1- (\nu_{0}+C_0)^{-\frac{1}{2}} \tilde{K}_0 (\nu_{0}+C_0)^{-\frac{1}{2}}|f|\rangle\leq \langle f,\ 1- (\nu_{0}+C_0)^{-\frac{1}{2}} \tilde{K}_0 (\nu_{0}+C_0)^{-\frac{1}{2}}f\rangle.$$ Noticing that $
\langle f,\ f\rangle=\langle |f|,\ |f|\rangle,$ we see that if $\xi$ is a minimizer, so is $|\xi |.$ Hence $|\xi|$ is a one of the solutions to ~\eqref{eq:secondEig}. This, together with the fact the integral kernel of $(\nu_{0}+C_0)^{-\frac{1}{2}} \tilde{K}_0 (\nu_{0}+C_0)^{-\frac{1}{2}}$ is strictly positive, implies that $|\xi|$ is strictly positive and $\xi=|\xi|$ or $-|\xi|.$
This in turn implies that the minimizer is unique and positive, up to a sign. And, moreover, the nonnegative function $\eta:=(\nu_0+C_0)^{-\frac{1}{2}}\xi$ satisfies the equation $$(-\nu_0+\tilde{K}_0-C_0)\eta=0,$$ i.e., $\eta$ is an eigenvector with eigenvalue $C_0.$

Furthermore, the strictly positive function $e^{-\frac{1}{2}|v|^2}$ is an eigenvector of $-\nu_0+\tilde{K}_0$ with eigenvalue zero. It is not orthogonal to the minimizer $\eta$. This forces the unique minimizer $\eta$ to be parallel to $e^{-\frac{1}{2}|v|^2}$ and, moreover, $C_0=0.$ This is statement (A).

To verify Statement (B), we derive from ~\eqref{eq:lowerBound} that
$$\inf_{g\perp \nu e^{-\frac{1}{2}|v|^2}}\frac{\langle g, (\nu_0-\tilde{K}_0)g\rangle}{\langle g,\ g\rangle}>0.$$
This together with the fact $\nu_0 e^{-\frac{1}{2}|v|^2}\not\perp e^{-\frac{1}{2}|v|^2}$ and the min-max principle implies statement B.

These results complete the proof of the lemma.
\end{proof}

\def\cprime{$'$} \def\cprime{$'$} \def\cprime{$'$} \def\cprime{$'$}

%\bibliographystyle{abbrv}
%\addcontentsline{toc}{chapter}{Bibliography}
%\bibliography{biblio}
\end{document}